\newtheorem{thm}{\bf Theorem}[section]
\newtheorem{prop}[thm]{\bf Proposition}
\newtheorem{lem}[thm]{\bf Lemma}
\newtheorem{cor}[thm]{\bf Corollary}
\theoremstyle{definition}
\newtheorem{dfn}[thm]{\bf Definition}
\newtheorem{ex}[thm]{\bf Example}
\theoremstyle{remark}
\newtheorem{rem}[thm]{\bf Remark}
\def \Q{\mathbb{Q}}
\def \C{\mathbb{C}}
\def \R{\mathbb{R}}
\def \CP{\mathbb{C}{\rm P}}
\def \P{\mathbb{P}}
\def \S{\mathbb{S}}
\def \E{{\rm Eu}}
\newcommand{\acknowledge}{\subsection*{Acknowledgments}}
\title[Rational smoothness, cellular decompositions, GKM theory]{Rational smoothness, cellular decompositions and GKM theory}
\author{Richard Gonzales}
\address{
Department of Mathematics\\
Galatasaray University \\
Ciragan Cad. No.36, Besiktas\\ 
34357 Istanbul \\
TURKEY}         
\email{rgonzalesv@gmail.com}
\subjclass[2010]{Primary 14F43, 14L30; Secondary 55N91, 14M15}
\keywords{rational smoothness, algebraic torus actions, GKM theory, equivariant cohomology, algebraic monoids, group embeddings.}
\begin{document}


\begin{abstract}
We introduce the notion of $\Q$-filtrable varieties: projective
varieties with a torus action and a finite number of fixed points, such
that the cells of the associated Bialynicki-Birula decomposition
are all rationally smooth. Our main results develop GKM theory 
in this setting. We also supply a method for building nice combinatorial 
bases on the equivariant 
cohomology of any $\Q$-filtrable GKM variety.
Applications to the theory of group embeddings are provided.
%
%
\end{abstract}
\maketitle

\section*{Introduction and Statement of the Main Results}
\addcontentsline{toc}{section}{Introduction}
Let $X$ be a smooth complex projective algebraic variety with a $\C^*$-action and
finitely many fixed points $x_1,\ldots,x_m$. The method
of Bialynicki-Birula (\cite{bb:torus}) gives rise to a decomposition of $X$ into 
locally closed subvarieties: 
$$W_i=\{x\in X\,|\, \lim_{t\to 0}tx=x_i\}.$$
Clearly, $X=\bigsqcup_i W_i$. The subvarieties $W_i$ are called
cells of the decomposition.  
Theorem 4.3 of \cite{bb:torus} asserts that all cells 
are isomorphic to affine spaces, that is, $W_i\simeq \C^{n_i}$ for all $i$.
From this, one concludes that
$X$ has no cohomology in odd degrees.
This method for breaking down a projective variety into pieces, 
also known as {\em BB-decomposition}, 
allows to compute important topological invariants, e.g. Betti numbers.  
It is worth emphasizing that many of the ideas of \cite{bb:torus} extend to the singular case. 
In fact, the BB-decomposition makes sense even if $X$ is singular,
though, this time, the cells need not be so well behaved. 
%

\smallskip

Goresky, Kottwitz and MacPherson in their seminal paper \cite{gkm:eqc},
developed a theory, nowadays called GKM theory, that makes it possible 
to describe the equivariant cohomology of certain {\em $T$-skeletal} varieties: 
projective algebraic varieties upon which an algebraic torus $T$ acts
with a finite number of fixed points and invariant curves. 
Cohomology, in this article, is considered with rational coefficients. 
Let $X$ be a $T$-skeletal variety and denote by $X^T$ the fixed point set. 
The main purpose of GKM theory is to identify 
the image of the functorial map 
\[
i^* : H^*_T(X) \to H^*_T(X^T),
\]
assuming $X$ has no cohomology in odd degrees ({\em equivariantly formal}).
GKM theory asserts that 
if $X$ is a {\em GKM variety}, i.e. $T$-skeletal and equivariantly formal, then 
the equivariant cohomology ring $H^*_T(X)$ 
can be identified with certain ring of piecewise polynomial functions $PP_T^*(X)$ (Theorem \ref{gkm.thm}). 

\smallskip

Mostly, GKM theory has been applied to smooth projective $T$-skeletal varieties, 
because they all have trivial cohomology in odd degrees (BB-decomposition).
Furthermore, the GKM data
issued from the fixed points and invariant curves 
has been explicitly obtained
for some interesting subclasses: 
flag varieties (\cite{c:schu}, \cite{bri:eqchow}),
toric varieties (\cite{bri:ech}, \cite{vv:hkth}, \cite{uma:kth}) 
and regular embeddings of 
reductive groups 
(\cite{bri:bru}, \cite{uma:kth}). Additionally, GKM theory has been applied
to Schubert varieties (\cite{c:schu}, \cite{bri:eqchow}, \cite{bri:eu}). 
The latter ones, even though singular,
are GKM varieties and 
their BB-cells (relative to an appropriate action of $\C^*$)
are exactly the Bruhat cells.

\medskip

Let now $X$ be a complex algebraic variety of dimension $n$, and $x\in X$.  
We say that $X$ is {\em rationally smooth at}
$x$, if there exists a neighborhood $U$ of $x$ (in the complex topology)
such that, for all $y\in U$, we have 
$$
\begin{array}{cccccc}
H^m(X,X-\{y\})=(0) \;\;{\rm if}\; m\neq 2n, &{\rm and} \;\; H^{2n}(X,X-\{y\})=\mathbb{Q}.\\
\end{array}
$$
If $X$ is rationally smooth at every $x\in X$, then $X$ is called {\em rationally smooth}. 
%
%
Such 
varieties satisfy Poincar\'e duality with rational coefficients \cite{mc:hm}. 
See \cite{bri:rat} for an up-to-date discussion of rationally smooth singularities on 
complex algebraic varieties with torus action. 

\smallskip

Let $G$ be a connected reductive group.
Recall that a normal irreducible projective variety $X$ is called an {\em embedding} of $G$, 
or a {\em group embedding}, 
if $X$ is a $G\times G$-variety containing an open orbit 
isomorphic to $G$. Let $M$ be a reductive monoid with zero and unit group $G$.
Then there exists a central one-parameter subgroup $\epsilon:\C^*\to G$,
with image $Z$ contained in the center of $G$, 
such that $\displaystyle \lim_{t\to 0}\epsilon(t)=0$. 
Moreover,
the quotient space $$\P_\epsilon(M):=(M\setminus\{0\})/Z$$
is a normal projective embedding of the quotient group $G/Z$.
Embeddings of the form $\P_\epsilon(M)$ are called {\em standard group embeddings}.
It is known that all normal projective embeddings of a 
connected reductive group are standard (\cite{ab:em}).
Using methods from the theory of algebraic monoids, Renner (\cite{re:ratsm}, \cite{re:hpolyirr})
investigated those standard embeddings that are rationally smooth. 

\medskip

The purpose of this article is to 
%
establish GKM theory 
in the setting of {\em $\Q$-filtrable varieties:} projective varieties with a torus action 
having finitely many fixed points, such that the cells of the associated
BB-decomposition are all rationally smooth, i.e. rational cells.
In general, $\Q$-filtrable varieties have singularities. 
As an application of our theory, 
we show that rationally smooth standard
embeddings are $\Q$-filtrable. Our results 
lay down the topological foundations for the study of 
rationally smooth 
standard embeddings 
via GKM theory.

\medskip

This article is organized as follows.
The first two sections briefly review GKM theory. 
In Section 3, we devote ourselves to the study of rational cells 
and state their main topological features (Thm. \ref{topratcell.thm}, Prop. 3.11, and Thm. \ref{cell.curves.thm}). In
Section 4 we introduce the notion  $\Q$-filtrable varieties.  
Our main result in this section is given below.

\medskip

\noindent {\bf Theorem \ref{eqforfiltration.thm}.}
{\it
Let $X$ be a normal projective $T$-variety. Suppose that $X$ is $\Q$-filtrable. Then 
\begin{enumerate}[(a)]
\item $X$ admits a filtration into $T$-stable closed subvarieties $X_i$, $i=0, \ldots, m$, such that
$$\emptyset=X_0\subset X_1\subset \ldots \subset X_{m-1}\subset X_m=X.$$
\item each cell $C_i=X_i \setminus X_{i-1}$ is a rational cell, for $i=1,\ldots, m$.
\item For each $i=1,\ldots, m$, the singular rational cohomology of $X_i$ vanishes in odd degrees. 
      In other words, each $X_i$ is equivariantly formal.  
\item If, in addition, the $T$-action on $X$ is $T$-skeletal, then each $X_i$ is a GKM-variety.
\end{enumerate}  
}

It is worth noting that $\Q$-filtrable spaces need not be rationally smooth.
For instance, Schubert varieties admit a decomposition into affine
cells but they are not always rationally smooth. As a way of example, 
the Schubert variety of codimension one in the Grassmannian
of $2$-planes in $\C^4$ does not satisfy Poincar\'e duality, and hence 
is not rationally smooth. References  
\cite{ar:eu} and \cite{bri:ech} supply some criteria 
for rational smoothness of Schubert varieties.   
What is remarkable 
about 
$\Q$-filtrable varieties
is that 
they are equivariantly formal. 

In Section 6, after recalling Arabia's notion of equivariant Euler classes (Section 5),
we construct free module generators on the equivariant cohomology
of any $\Q$-filtrable GKM variety. 
Our findings extend the earlier works of Arabia(\cite{ar:eu}) and Guillemin-Kogan (\cite{gk:morse}). 
The main result of Section 6 is the following.

\medskip

\noindent{\bf Theorem \ref{eulergenerators.thm}.}
{\it Let $X$ be a $\Q$-filtrable GKM-variety.
Let $x_1<x_2<\ldots<x_m$ be the order relation on $X^T$ 
compatible with the filtration of $X$ given in Theorem \ref{eqforfiltration.thm}. 
Then there exist unique classes $\theta_i\in H^*_T(X)$, $i=1,\ldots,m$, with the following properties:
\begin{enumerate}[(i)]
\item $I_i(\theta_i)=1$,

\item $I_j(\theta_i)=0$ for all $j\neq i$,

\item the restriction of $\theta_i$ to $x_j \in X^T$ is zero for all $j<i$, and

\item $\theta_i(x_i)=\E_T(i,C_i)$.
\end{enumerate}
Moreover, the $\theta_i$'s generate $H^*_T(X)$ freely as a module over $H^*_T(pt)$. 
}
\smallskip

Here $I_i:H^*_T(X)\to H^*_T(pt)$ is the $H^*_T$-linear map
obtained by integrating, along $X_i$, the pullback
of a class in $X$ to $X_i$, and 
$\E_T(i,C_i)$ stands for the equivariant Euler class.

\smallskip

Although the class of $\Q$-filtrable varieties
includes
smooth projective $T$-skeletal varieties 
and Schubert varieties, 
its crucial attribute 
is that it also includes
a large and interesting family of
singular group embeddings, namely, 
rationally smooth standard embeddings.  
Indeed, in the last section of this article, we show that 
the notion of $\Q$-filtrable variety is well suited to the study of group embeddings
and, in doing so, we provide our theory with its major set of fundamental examples. 
Our main result in this direction can be stated as follows.

\medskip

\noindent{\bf Theorem \ref{ratsmisqfiltrable.cor}.}
{\it Let $X=\mathbb{P}_\epsilon(M)$ be a standard group embedding. 
If $X$ is rationally smooth, then $X$ is $\Q$-filtrable and so it has no cohomology in odd degrees.}


\smallskip

\acknowledge Some of these results were part of the author's doctoral dissertation under the supervision
of Lex Renner. I would like to thank him and Michel Brion for their invaluable help.

\section{Equivariant Cohomology and Localization}



Throughout this article, we work with complex algebraic varieties. 
Cohomology is always considered with rational coefficients.


\subsection{The Borel construction}
Let $T=(\C^*)^r$ be an algebraic torus and let $X$ be a 
$T$-variety, that is, a complex algebraic variety 
with an algebraic action of $T$.
Let $ET\to BT$ be a universal principal bundle for $T$. 
The {\bf equivariant cohomology} of $X$ (with rational coefficients) is defined to be
$$H_{T}^*(X):=H^*(X_T),$$
where $X_T=(X\times ET)/T$ is the total space associated to the fibration
$$\xymatrix{X \ar@{^(->}[r] & X_T \ar[r]^{p_X} & BT}.$$
This construction was introduced by Borel \cite{bo:sem}.
Here, $BT$ is simply connected, the map $p_X$ is induced by the canonical projection 
$ET\times X\to ET$, and $T$ acts diagonally on $ET\times X$.
Notice that
$H^*_T(X)$ is, via $p_X^*$, an algebra over $H^*_T(pt)$.
To simplify notation, we sometimes write $H^*_T$ instead of $H^*_T(pt)$.
%
%

It can be shown that 
$H^*_T(X)$ is independent of the choice of universal $T$-bundle.  
See \cite{bo:sem} and \cite{qui1:spec} for more details.

\begin{ex}
Let $T=(\C^*)^r$ be an algebraic torus. In this case, $BT=(\CP^\infty)^r$, and consequently 
$H^*_T(pt)=H^*(BT)=\Q[x_1,\ldots,x_r]$,
where $deg(x_i)=2$. 
A more intrinsic description of $H^*_T(pt)$ is given as follows. 
Denote by $\Xi(T)$ the character group of $T$. 
Any $\chi \in \Xi(T)$ defines a one-dimensional complex representation
of $T$ with space $\C_\chi$. Here $T$ acts on $\C_\chi$ via $t\cdot z:=\chi(t)z$. 
Consider the associated
complex line bundle $$L(\chi):=(E_T\times_T \C_\chi\to BT)$$
and its first Chern class $c(\chi)\in H^2(BT)$. Let $S$
be the symmetric algebra over $\Q$ of the group $\Xi(T)$. Then
$S$ is a polynomial ring on $r$ generators of degree $1$, and the map
$\chi \to c(\chi)$ extends to a ring isomorphism $$c:S\to H^*_T(pt)$$
which doubles degrees: the 
{\em characteristic homomorphism} (\cite{bo:sem}).
\end{ex}


\subsection{Localization Theorem for torus actions}

%
%
Let $S\subset H^*_T$ be the multiplicative system $H^*_T\setminus \{0\}$. 
For a given $T$-variety $X$, denote by $X^T$ the fixed point set.
The following is a classical theorem due to 
Borel (\cite{bo:sem}). See also \cite{hs:ctg}, Theorem III.1. 

\begin{thm}
Let $X$ be a $T$-variety.
Suppose $H^*_T(X)$ is a finite $H^*_T$-module.
Then the localized restriction homomorphism
$$S^{-1}H^*_T(X)\longrightarrow S^{-1}H^*_T(X^T)=H^*(X^T)\otimes_{\Q}(S^{-1}H^*_T)$$
is an isomorphism. \hfill $\square$
\end{thm}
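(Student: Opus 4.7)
The plan is to deduce the theorem from the long exact sequence of the pair $(X,X^T)$ in equivariant cohomology
$$\cdots \to H^*_T(X,X^T) \to H^*_T(X) \to H^*_T(X^T) \to H^{*+1}_T(X,X^T) \to \cdots$$
Since $T$ acts trivially on $X^T$, the target is $H^*(X^T)\otimes H^*_T$, and after tensoring with $S^{-1}H^*_T$ it becomes exactly the right-hand side in the statement. Exactness is preserved by localization, so it suffices to prove the vanishing $S^{-1}H^*_T(X,X^T)=0$. By excision, the relative group fits into a long exact sequence comparing $X$ and the open complement $U=X\setminus X^T$, so, granted that $H^*_T(X)$ is finitely generated over $H^*_T$, the problem reduces to the following core claim: if $Y$ is a $T$-variety with $Y^T=\emptyset$ and $H^*_T(Y)$ is a finite $H^*_T$-module, then $S^{-1}H^*_T(Y)=0$.

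To prove the core claim, I would first treat the case of a single orbit $Y=T/H$ with $H\subsetneq T$. Here $Y_T\simeq BH$, so $H^*_T(T/H)=H^*(BH)$, and the $H^*_T$-module structure is induced by the restriction $H^*(BT)\to H^*(BH)$. Since $H$ is a proper closed subgroup of $T$, the quotient $T/H$ has positive-dimensional character group, so there exists a nontrivial character $\chi\in\Xi(T)$ which is trivial on $H$. The first Chern class $c(\chi)\in H^2_T$ lies in $S$ and maps to $0$ in $H^*(BH)$; hence $c(\chi)$ annihilates the whole module $H^*_T(T/H)$, forcing $S^{-1}H^*_T(T/H)=0$.

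Next I would propagate this vanishing from orbits to arbitrary $T$-invariant subsets by a Mayer--Vietoris/induction argument. Using that the $T$-action is algebraic, $Y$ can be stratified by a finite sequence of $T$-invariant locally closed subvarieties, each of which fibers over a quasi-projective base with fiber a single orbit type $T/H_j$ (with all $H_j$ proper in $T$). For two $T$-stable open sets $V_1, V_2 \subset Y$ with known vanishing, the Mayer--Vietoris sequence plus the fact that a finite product of elements of $S$ is again in $S$ (as $S^{-1}H^*_T$ is a localization of an integral domain) gives the vanishing for $V_1\cup V_2$. Iterating, and using the finiteness hypothesis on $H^*_T(Y)$ to ensure that only finitely many denominators are needed, yields $S^{-1}H^*_T(Y)=0$.

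The main obstacle, I expect, is this last propagation step: passing rigorously from orbitwise vanishing to vanishing on $Y$ while keeping track of the fact that a priori different classes may require different annihilators in $S$. This is precisely where the hypothesis that $H^*_T(X)$ is a finite $H^*_T$-module is indispensable, since it guarantees that a single nonzero element of $S$ can be produced that annihilates all of $H^*_T(Y)$ at once. Combined with the integrality of $H^*_T=\Q[x_1,\dots,x_r]$, which ensures that $S$ is closed under multiplication and does not contain zero divisors, the argument then closes and the localized restriction map becomes an isomorphism.
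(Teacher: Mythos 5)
The paper does not prove this statement at all: it is quoted as a classical theorem of Borel, with a pointer to \cite{bo:sem} and to Theorem III.1 of \cite{hs:ctg}, and the proof environment is just the closing square. Your outline is, in substance, exactly the standard argument given in those references (and in Quillen and Atiyah--Bott): localize the long exact sequence of the pair $(X,X^T)$, reduce to showing $S^{-1}H^*_T(Y)=0$ for $Y$ without fixed points, kill a single orbit $T/H$ by a character vanishing on $H$, and propagate by a finite induction using the finiteness of $H^*_T(X)$ to get a single common denominator. So you are reconstructing the cited proof rather than inventing a new route, and the global structure is sound.

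Two places in your sketch need tightening before it is a proof. First, the propagation step: the orbit-type decomposition of $Y$ is by \emph{locally closed} $T$-stable strata, whereas Mayer--Vietoris as you invoke it applies to a cover by \emph{open} sets; you must either pass to $T$-invariant (tubular) neighborhoods of the strata, or order the strata so that the unions $Y_1\subset Y_1\cup Y_2\subset\cdots$ of initial segments are open (or closed) and use the long exact sequences of those pairs instead. Second, for a stratum $V$ that is not a single orbit but fibers over a positive-dimensional base $V/T$ with fiber $T/H_j$, a character $\chi$ trivial on $H_j^0$ does \emph{not} annihilate $H^*_T(V)$; its image in $H^2_T(V)$ merely restricts to zero on each fiber $BH_j$ of $V_T\to V/T$, hence lies in positive Leray filtration and is therefore \emph{nilpotent} (a power bounded by $\dim(V/T)$ vanishes). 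Nilpotence of an element of $S$ still forces $S^{-1}H^*_T(V)=0$, so the conclusion survives, but the assertion ``$c(\chi)$ annihilates the whole module'' is only literally true for a single orbit. (The replacement of $\chi$ by a power to handle the finite group $H/H^0$ is harmless over $\Q$, as you implicitly use.) With these repairs your argument matches the classical proof the paper is citing.
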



\section{GKM theory}

GKM theory is 
a relatively recent tool 
that owes its name to the work of Goresky, Kottwitz and MacPherson \cite{gkm:eqc}. 
This theory encompasses techniques that date back to the early works of 
Atiyah (\cite{a:comp}, \cite{as:index2}), Segal (\cite{se:equiv}), Borel (\cite{bo:sem}) and Chang-Skjelbred (\cite{cs:schur}).

\subsection{Equivariant formality} 

\begin{dfn}
Suppose an algebraic torus $T$ acts on a (possibly singular) space $X$. Let $p_X:X_T\longrightarrow BT$ be the fibration
associated to the Borel construction. We say that $X$ is {\bf equivariantly formal} if the Serre spectral sequence
$$E_2^{p,q}=H^p(BT;H^q(X))\Longrightarrow H^{p+q}_T(X)$$
for this fibration degenerates at $E_2$.
\end{dfn}

The following theorem characterizes equivariant formality. 
For a proof, see \cite{gkm:eqc}, Theorem 1.6.2, or \cite{bri:eu}, Lemma 1.2.

\begin{thm}\label{ch.formal.thm}
Consider the following conditions for a $T$-variety $X$. 
\smallskip

\begin{enumerate}[(a)]
\item $X$ is equivariantly formal.

\smallskip

\item The edge homomorphism $H_T^*(X)\longrightarrow H^*(X)$
is surjective; that is, the ordinary rational cohomology is given by extension of scalars, $$H^*(X)\simeq H^*_T(X)\otimes_{H^*_T}\Q.$$

\smallskip

\item $H^*_T(X,\mathbb{Q})$ is a free $H^*_T(pt)$-module.


\smallskip

\item The singular rational cohomology of $X$ vanishes in odd degrees. 
\end{enumerate}
\medskip

\noindent Then (a)$\Leftrightarrow$(b)$\Leftrightarrow$(c)$\Leftarrow$(d).

\smallskip

If $X^T$ is finite, then all these conditions are equivalent. \hfill $\square$
\end{thm}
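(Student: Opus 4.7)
The plan is to establish the chain of implications (d)$\Rightarrow$(a)$\Rightarrow$(b)$\Rightarrow$(c)$\Rightarrow$(a), and then close the loop when $X^T$ is finite by showing (a)$\Rightarrow$(d).

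First I would dispose of (d)$\Rightarrow$(a) by a parity argument on the Serre spectral sequence of $p_X\colon X_T\to BT$. Since $BT=(\CP^\infty)^r$ has cohomology concentrated in even degrees, the hypothesis (d) forces $E_2^{p,q}=H^p(BT)\otimes H^q(X)$ to vanish whenever $p+q$ is odd. The differential $d_r\colon E_r^{p,q}\to E_r^{p+r,q-r+1}$ raises total degree by $1$, so it always sends an even-total-degree term to an odd-total-degree term; hence every $d_r$ is zero and the spectral sequence degenerates at $E_2$.

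Next, (a)$\Leftrightarrow$(b) follows from the standard fact that, in any cohomological Serre spectral sequence, the edge homomorphism $H^*_T(X)\to H^*(X)$ can be identified with the projection to $E_\infty^{0,*}\subseteq E_2^{0,*}=H^*(X)$; surjectivity of this edge map is equivalent to $E_\infty^{0,*}=E_2^{0,*}$, which by the multiplicative structure of the spectral sequence (transgressions out of $E_2^{0,q}$ control the rest) is equivalent to full degeneration. For (b)$\Rightarrow$(c) I would invoke the Leray--Hirsch theorem: choose homogeneous lifts $\tilde{y}_1,\ldots,\tilde{y}_k\in H^*_T(X)$ of a homogeneous $\Q$-basis $y_1,\ldots,y_k$ of $H^*(X)$; then the $H^*_T$-linear map $H^*_T\otimes_{\Q} H^*(X)\to H^*_T(X)$, $1\otimes y_i\mapsto \tilde y_i$, is an isomorphism, so $H^*_T(X)$ is $H^*_T$-free on the $\tilde{y}_i$. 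For (c)$\Rightarrow$(b) I would use the Eilenberg--Moore spectral sequence
\[
E_2^{p,q}=\mathrm{Tor}^{p,q}_{H^*(BT)}\bigl(H^*_T(X),\Q\bigr)\Longrightarrow H^{p+q}(X);
\]
freeness of $H^*_T(X)$ over $H^*_T=H^*(BT)$ makes $\mathrm{Tor}^{>0}$ vanish, so the spectral sequence collapses and yields $H^*(X)\cong H^*_T(X)\otimes_{H^*_T}\Q$, which is exactly the assertion that the edge map is surjective with kernel generated by $H^{>0}_T$.

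Finally, assume $X^T$ is finite and (a) holds; I must prove (d). Because $X$ is a (compact) $T$-variety with $X^T$ finite, $H^*_T(X)$ is a finitely generated $H^*_T$-module, so Borel's localization theorem (Theorem 2.3) applies and the restriction
\[
i^*\colon H^*_T(X)\longrightarrow H^*_T(X^T)=\bigoplus_{x\in X^T}H^*_T
\]
becomes an isomorphism after inverting $S=H^*_T\setminus\{0\}$. By the equivalence (a)$\Leftrightarrow$(c) already established, $H^*_T(X)$ is a free $H^*_T$-module, hence $S$-torsion free, so $i^*$ is already injective. The target is concentrated in even degrees, therefore so is $H^*_T(X)$, and then (b) gives $H^*(X)\cong H^*_T(X)\otimes_{H^*_T}\Q$ concentrated in even degrees, proving (d). The only subtle step is this last implication: it requires both Borel localization (which needs the finite-generation hypothesis provided by the finiteness of $X^T$) and the freeness already extracted from (a).
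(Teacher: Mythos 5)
Your proof is correct. Note that the paper does not actually prove this theorem --- it is stated with a reference to \cite{gkm:eqc}, Theorem 1.6.2, and \cite{bri:eu}, Lemma 1.2 --- so the comparison is with those sources rather than with an argument in the text. Your chain (d)$\Rightarrow$(a)$\Rightarrow$(b)$\Rightarrow$(c)$\Rightarrow$(a), plus Borel localization for the finite-fixed-point case, is essentially the standard proof found there: the parity argument for (d)$\Rightarrow$(a), Leray--Hirsch for (b)$\Rightarrow$(c), and the torsion-freeness-plus-localization argument showing $H^*_T(X)$ embeds into $\bigoplus_{x\in X^T}H^*_T$ (which is evenly graded) are all as in Brion's Lemma 1.2. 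Two small points where you lean on slightly heavier machinery than necessary: for (b)$\Rightarrow$(a) your multiplicativity argument implicitly uses that the bottom row $E_2^{*,0}=H^*(BT)$ consists of permanent cycles (true, since these classes are pulled back from the base, but worth saying); and for (c)$\Rightarrow$(b) you invoke the Eilenberg--Moore spectral sequence, whereas the references get by with a Koszul resolution of $\Q$ over $H^*_T$ or a Hilbert-series comparison between $E_2$ and $E_\infty$ --- your route is valid (convergence holds since $BT$ is simply connected and $X$, being a complex algebraic variety, has finite-dimensional rational cohomology) but is the one step a reader might want justified more carefully.
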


It follows that $X$ is equivariantly formal if and only if 
there is an isomorphism of $H^*_T$-modules between 
$H^*_T(X)$ and $H^*(X)\otimes_\Q H^*_T$. Every smooth
projective $T$-variety is equivariantly formal (\cite{gkm:eqc}, Theorem 14.1 (7)).



\smallskip

A joint application of Theorem 1.2 and Theorem 2.2 leads to the following.

\begin{cor} 
Let $X$ be a $T$-variety with a finite number of fixed points. 
Then $X$ is equivariantly formal if and only if $H^*_T(X)$ is a free $H^*_T$-module of rank $|X^T|$,
the number of fixed points.  \hfill $\square$
\end{cor}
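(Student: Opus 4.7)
The plan is to deduce the corollary directly from Theorem 2.2 (the characterization of equivariant formality) and Theorem 1.2 (the Borel localization theorem), with the rank computation being the only substantive point.

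For the backward direction, assume $H^*_T(X)$ is a free $H^*_T$-module of rank $|X^T|$. Then condition (c) of Theorem 2.2 is satisfied, and since $X^T$ is finite, the last sentence of Theorem 2.2 tells us that (c) implies (a). Hence $X$ is equivariantly formal, and the rank hypothesis plays no role beyond guaranteeing freeness.

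For the forward direction, assume $X$ is equivariantly formal. By Theorem 2.2, $H^*_T(X)$ is a free $H^*_T$-module, and by the isomorphism $H^*(X) \simeq H^*_T(X)\otimes_{H^*_T}\Q$, its rank equals $\dim_\Q H^*(X)$. Since $X$ is an algebraic variety with finitely many fixed points, its rational cohomology is finite-dimensional (it is, in the applications of interest, projective), so $H^*_T(X)$ is a finitely generated $H^*_T$-module. We are therefore entitled to apply Theorem 1.2, which yields
\[
S^{-1}H^*_T(X)\;\cong\;S^{-1}H^*_T(X^T)\;=\;\bigoplus_{x\in X^T} S^{-1}H^*_T,
\]
so the $S^{-1}H^*_T$-rank of the localization is exactly $|X^T|$. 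Because localization preserves the rank of a free module, the $H^*_T$-rank of $H^*_T(X)$ must also equal $|X^T|$.

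The only mildly delicate step is verifying the finiteness hypothesis needed to invoke Theorem 1.2; this is automatic once one knows $H^*(X)$ is finite-dimensional, which holds under the standing hypothesis that $X$ is a complex algebraic variety of the type considered in the paper. Everything else is a formal consequence of the two cited theorems, so the corollary should follow in a few lines.
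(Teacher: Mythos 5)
Your proof is correct and follows exactly the route the paper indicates: the paper gives no written argument beyond the remark that the corollary is ``a joint application of Theorem 1.2 and Theorem 2.2,'' and your use of condition (c) of Theorem 2.2 for the backward direction and of Borel localization to pin down the rank as $|X^T|$ in the forward direction is precisely that intended argument. The finiteness point you flag is indeed automatic, since a complex algebraic variety has finite-dimensional rational cohomology.
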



\subsection{$T$-Skeletal Actions}





\begin{dfn} \label{genericaction.def} 
Let $X$ be a projective $T$-variety. 
Let $\mu:T\times X\to X$ be the action map.
We say that $\mu$ is a {\boldmath $T$\bf-skeletal action} if 
%
\begin{enumerate}
 \item $X^T$ is finite, and
 \item The number of one-dimensional orbits of $T$ on $X$ is finite.
\end{enumerate}
In this context, $X$ is called a {\boldmath$T$\bf-skeletal variety}. 
If a $T$-skeletal variety $X$ is also equivariantly formal, 
then we say that $X$ is a {\bf GKM variety}.
\end{dfn}

Let $X$ be a normal projective $T$-skeletal variety. 
Then $X$ has an equivariant
embedding into a projective space with a linear action of $T$ (\cite{su:eq}, Theorem 1),
and so the closure of any orbit of dimension one in $X$ contains exactly two fixed points.
Accordingly, it is possible to define a ring $PP_T^*(X)$
of {\bf piecewise polynomial functions}. Indeed, let $R=\bigoplus_{x\in X^T}R_x$,
where $R_x$ is a copy of the polynomial algebra $H^*_T$. We then define
$PP_T^*(X)$ as the subalgebra of $R$ defined by
\[
PP_T^*(X)= \{(f_1,...,f_n)\in\bigoplus_{x\in X^T}R_x\;|\; f_i\equiv f_j\;mod(\chi_{i,j})\}
\]
where $x_i$ and $x_j$ are the two {\em distinct} fixed points in the closure of the one-dimensional
$T$-orbit $\mathcal{C}_{i,j}$, and $\chi_{i,j}$ is the character of $T$ associated with $\mathcal{C}_{i,j}$.
The character $\chi_{i,j}$ is uniquely determined up to sign (permuting the two
fixed points changes $\chi_{i,j}$ to its opposite). 


\begin{thm}[\cite{cs:schur}, \cite{a:comp}, \cite{gkm:eqc}]\label{gkm.thm}
Let $X$ be a normal projective $T$-skeletal variety. Suppose that $X$ is a GKM variety.
Then the restriction mapping 
$$H^*_T(X)\longrightarrow H^*_T(X^T)= \bigoplus_{x_i\in X^T}H^*_T$$ 
is injective, and its image is the subalgebra $PP_T^*(X)$. 
\hfill $\square$
\end{thm}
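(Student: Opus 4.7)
The plan is to combine the Borel Localization Theorem (Theorem 1.2), a direct calculation on invariant curves, and the Chang--Skjelbred lemma implicit in \cite{cs:schur}. There are two things to prove: injectivity of $i^{*}\colon H^{*}_{T}(X)\to H^{*}_{T}(X^{T})$, and the precise identification of its image with $PP_{T}^{*}(X)$.

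Injectivity is the easy half. Because $X$ is a GKM variety (hence equivariantly formal with finite $X^{T}$), Corollary 2.3 says $H^{*}_{T}(X)$ is a free $H^{*}_{T}$-module of rank $|X^{T}|$; in particular it has no $H^{*}_{T}$-torsion. The Localization Theorem 1.2 tells us that $i^{*}$ becomes an isomorphism after inverting $S=H^{*}_{T}\setminus\{0\}$, so any class killed by $i^{*}$ is an $S$-torsion class, and thus must be zero.

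For the image, I would first show the inclusion $\mathrm{Im}(i^{*})\subseteq PP_{T}^{*}(X)$ by reduction to each invariant curve. Fix a one-dimensional orbit $\mathcal{C}_{i,j}$; its closure $E_{i,j}$ is a $T$-stable rational curve containing only the two fixed points $x_{i},x_{j}$, with tangent weights $\pm\chi_{i,j}$. A direct computation (via the equivariant Mayer--Vietoris for the standard cover of $\mathbb{P}^{1}$, or the projective bundle formula) gives
\[
H^{*}_{T}(E_{i,j})\cong\{(f_{i},f_{j})\in H^{*}_{T}\oplus H^{*}_{T}\mid f_{i}\equiv f_{j}\pmod{\chi_{i,j}}\}.
\]
Given $\alpha\in H^{*}_{T}(X)$, restrict first to $H^{*}_{T}(E_{i,j})$ and then to $E_{i,j}^{T}=\{x_{i},x_{j}\}$; the congruence above forces $\alpha(x_{i})\equiv\alpha(x_{j))}\pmod{\chi_{i,j}}$, so $i^{*}(\alpha)\in PP_{T}^{*}(X)$.

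The reverse inclusion is the heart of the matter. The key tool is the Chang--Skjelbred lemma: for an equivariantly formal $T$-variety, the image of $H^{*}_{T}(X)\to H^{*}_{T}(X^{T})$ equals
\[
\bigcap_{T'}\mathrm{Im}\bigl(H^{*}_{T}(X^{T'})\to H^{*}_{T}(X^{T})\bigr),
\]
the intersection running over codimension-one subtori $T'\subset T$. In the $T$-skeletal setting $X^{T'}$ is either $X^{T}$ (when $T'$ is generic) or the union of $X^{T}$ with the finitely many invariant curves whose generic stabilizer contains $T'$; in either case each connected component is a fixed point or an $E_{i,j}$. By the previous computation the contribution from each $T'$ is precisely the congruence conditions mod $\chi_{i,j}$ coming from its invariant curves, and the total intersection is exactly $PP_{T}^{*}(X)$.

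The main obstacle is justifying the Chang--Skjelbred step: one must show that the restriction $H^{*}_{T}(X^{T_{1}})\to H^{*}_{T}(X^{T})$ is surjective onto the intersection, where $X^{T_{1}}$ denotes the equivariant $1$-skeleton. This is obtained from the long exact sequence of the pair $(X,X^{T_{1}})$ together with a vanishing argument: equivariant formality implies that $H^{*}_{T}(X,X^{T})$ has depth at least two as an $H^{*}_{T}$-module, which forces the connecting homomorphism from the intersection into $H^{*+1}_{T}(X,X^{T_{1}})$ to vanish. Granting this standard fact (proved in \cite{cs:schur,gkm:eqc}), the image description follows and Theorem \ref{gkm.thm} is complete.
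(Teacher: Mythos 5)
The paper does not prove Theorem \ref{gkm.thm} at all: it is stated as a known result and attributed to \cite{cs:schur}, \cite{a:comp}, \cite{gkm:eqc}, with no argument supplied. Your outline is a correct reconstruction of the standard proof from exactly those sources --- injectivity from freeness plus the Localization Theorem, the containment $\mathrm{Im}(i^*)\subseteq PP_T^*(X)$ by restricting to the closures of the one-dimensional orbits, and the reverse inclusion via the Chang--Skjelbred exact sequence --- so in substance you are following the same route the paper points to, just writing it out. One small imprecision: in your justification of the Chang--Skjelbred step, the module that must be supported in codimension at least two is $H^*_T(X,X_1)$ (the pair with the one-skeleton $X_1$), not $H^*_T(X,X^T)$; what one uses about $H^{*+1}_T(X,X^T)\cong H^*_T(X^T)/H^*_T(X)$ is that its associated primes have height one. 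Since you explicitly defer that step to \cite{cs:schur} and \cite{gkm:eqc}, this does not affect the correctness of the outline.
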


%
%

\smallskip

Theorem \ref{ch.formal.thm}
characterizes normal projective 
GKM-varieties among all $T$-skeletal varieties.

\begin{thm}
Let $X$ be a normal projective variety with a $T$-skeletal action $\mu:T\times X \to X.$ 
Then $X$ is a GKM-variety
if and only if $X$ has no (rational) cohomology in odd degrees. \hfill $\square$
\end{thm}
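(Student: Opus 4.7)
The plan is to deduce the statement almost immediately from Theorem~\ref{ch.formal.thm}: since a GKM variety is by definition a $T$-skeletal variety that is also equivariantly formal, and $X$ is already assumed $T$-skeletal, the only thing at stake is recognizing equivariant formality from the vanishing of odd rational cohomology. The key observation is that the $T$-skeletal hypothesis forces $X^T$ to be finite, which is precisely the extra assumption in Theorem~\ref{ch.formal.thm} that promotes the implications (a)$\Leftrightarrow$(b)$\Leftrightarrow$(c)$\Leftarrow$(d) to a full equivalence of all four conditions.

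For the forward direction, if $X$ is a GKM variety then it is equivariantly formal, so condition (a) of Theorem~\ref{ch.formal.thm} holds; applying (a)$\Rightarrow$(d) (available because $X^T$ is finite) yields $H^{\mathrm{odd}}(X)=0$. For the converse, the implication (d)$\Rightarrow$(a) in Theorem~\ref{ch.formal.thm} requires no hypothesis on $X^T$, so the vanishing of odd cohomology gives equivariant formality of $X$; combined with the standing $T$-skeletal assumption, this is precisely the definition of a GKM variety.

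There is no genuine obstacle to overcome here; the substantive content is already encoded in Theorem~\ref{ch.formal.thm}, and the present statement is best viewed as a convenient repackaging of that result in the presence of a $T$-skeletal action. The only point that warrants explicit attention is the invocation of the finiteness of $X^T$, which ensures that equivariant formality, freeness of $H^*_T(X)$ over $H^*_T$, and the absence of odd cohomology all coincide for $T$-skeletal varieties.
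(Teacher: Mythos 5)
Your proposal is correct and matches the paper's intent exactly: the paper states this theorem without proof, remarking only that Theorem~\ref{ch.formal.thm} ``characterizes normal projective GKM-varieties among all $T$-skeletal varieties,'' which is precisely your argument that finiteness of $X^T$ (forced by the $T$-skeletal hypothesis) upgrades the implications of that theorem to the equivalence of equivariant formality with vanishing odd cohomology.
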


We will show that the class of equivariantly formal spaces incorporates  
certain subclass of singular varieties, namely, $\Q$-filtrable varieties 
(Theorems \ref{eqforfiltration.thm} and \ref{eulergenerators.thm}). 
This subclass encompasses all rationally smooth 
standard embeddings of a reductive group (Theorem \ref{ratsmisqfiltrable.cor}).
As such, it is much larger than the subclass of smooth varieties.

\section{Rational Cells}

This section is devoted to the study of 
our most important topological tool: rational cells.





\begin{dfn}
Let $X$ be an algebraic variety with an action of a torus $T$ and a fixed point $x$.
We say that $x$ is an {\bf attractive fixed point} if there exists a one-parameter subgroup $\lambda:\C^*\to T$
and a neighborhood $U$ of $x$,
such that $\displaystyle \lim_{t\to 0}\lambda(t)\cdot y=x$ for all points $y$ in $U$.  
\end{dfn}

There is an important characterization of attractive fixed points. 
A proof of the following result can be found in \cite{bri:rat}, Proposition A2.

\begin{prop}\label{tang.cell}
For a torus $T$ acting on a variety $X$ with a fixed point $x$, the following conditions are equivalent:

(i) The weights of $T$ in the Zariski tangent space $T_x(X)$ are contained in an open half space.

(ii) There exists a one-parameter subgroup $\lambda:\C^*\to T$ such that, for all $y$ in a neighborhood of $x$, we have $\displaystyle \lim_{t\to 0}\lambda(t)\,y=\,x$.

If (ii) holds, then the set 
$$
X_x:=\{y\in X \,|\, \lim_{t\to 0}\lambda(t)\, y=\,x \}
$$
is the unique affine $T$-invariant open neighborhood of $x$ in $X$. 
Moreover, $X_x$ admits a closed $T$-equivariant embedding into $T_xX$. \hfill $\square$
\end{prop}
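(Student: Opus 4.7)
The plan is to prove the equivalence by exploiting the $T$-weight decomposition of the local structure at $x$, and to establish the supplementary statement by constructing an explicit $T$-equivariant closed embedding of a suitable neighborhood into $T_xX$. Throughout, I would fix a $T$-stable affine open neighborhood $V$ of $x$ (available by Sumihiro's theorem) and decompose $\mathcal{O}(V) = \bigoplus_\chi \mathcal{O}(V)_\chi$ according to $T$-characters.

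For (ii)$\Rightarrow$(i), a weight-$\chi$ function $f\in\mathfrak{m}_x$ satisfies $f(\lambda(t)y) = t^{-\langle \chi,\lambda\rangle} f(y)$ by direct calculation. The hypothesis $\lim_{t\to 0}\lambda(t)y = x$ forces $f(\lambda(t)y)\to 0$ for every such $f$ and every $y$ near $x$, and hence $\langle \chi,\lambda\rangle < 0$ for every weight $\chi$ appearing in $\mathfrak{m}_x/\mathfrak{m}_x^2$. Dualizing, all weights of $T_xX$ lie in the open half-space $\{\varphi : \langle \varphi,\lambda\rangle > 0\}$, proving (i).

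For (i)$\Rightarrow$(ii), given the weights $\chi_1,\dots,\chi_n$ of $T_xX$ in an open half-space, I would select a one-parameter subgroup $\lambda$ pairing strictly positively with each $\chi_i$, and pick weight-vector representatives $f_1,\dots,f_n\in\mathfrak{m}_x$ whose classes span $\mathfrak{m}_x/\mathfrak{m}_x^2 \cong (T_xX)^*$. The $T$-equivariant morphism $\Phi: V \to T_xX$, $y\mapsto(f_1(y),\dots,f_n(y))$, is étale at $x$ by construction; the key step is to promote this to a global closed embedding. Since every weight appearing in $\mathfrak{m}_x^k$ pairs with $\lambda$ at a value bounded above by $k\cdot\max_i\langle -\chi_i,\lambda\rangle < 0$, an inductive argument on $\lambda$-weight shows that $f_1,\dots,f_n$ generate $\mathcal{O}(V)$ as a $\C$-algebra (possibly after shrinking $V$ to a smaller $T$-stable affine open). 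Then $\Phi$ realizes $V$ as a closed $T$-subvariety of $T_xX$, and since $\lim_{t\to 0}\lambda(t)v = 0$ for every $v\in T_xX$, the contraction transfers to $V$, verifying (ii) and producing the closed $T$-equivariant embedding claimed in the moreover clause.

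Finally, for uniqueness, if $V$ is constructed as above and $y\in X_x$, then $\lambda(t)y\to x\in V$ implies $\lambda(t)y\in V$ for small $t$, and $T$-invariance of $V$ gives $y = \lambda(t)^{-1}\cdot\lambda(t)y\in V$; together with the inclusion $V\subseteq X_x$ already established, this shows $X_x = V$. The same argument applied to any other $T$-stable affine open neighborhood of $x$ forces it to coincide with $X_x$. The main obstacle will be the global surjectivity of $\Phi^*$: passing from infinitesimal generation (Nakayama at $x$) to generation over all of $V$ requires the boundedness provided by the strict positivity of the $\lambda$-grading on $\mathfrak{m}_x$, which is exactly where hypothesis (i) enters in an essential, non-infinitesimal way.
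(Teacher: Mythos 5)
The paper does not prove this proposition at all: it is quoted verbatim from Brion \cite{bri:rat}, Proposition A2, with the $\square$ marking an external citation. So the relevant comparison is with Brion's argument, and your outline is essentially a reconstruction of it: (ii)$\Rightarrow$(i) by evaluating $T$-eigenfunctions of $\mathfrak{m}_x$ along $\lambda$-orbits, (i)$\Rightarrow$(ii) and the embedding by lifting a basis of $\mathfrak{m}_x/\mathfrak{m}_x^2$ to eigenfunctions and using the strict positivity of the $\lambda$-grading, and uniqueness by the attraction-plus-$T$-stability argument. The overall strategy is the right one.

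That said, there are two genuine soft spots. First, your opening move --- ``fix a $T$-stable affine open neighborhood $V$ of $x$ by Sumihiro'' --- requires $X$ to be normal (and quasi-projective to get from $T$-stable quasi-projective opens to $T$-stable affine ones), whereas the proposition is stated for an arbitrary variety; indeed, for non-normal $X$ a fixed point need not have any $T$-stable affine neighborhood (the node of a nodal cubic with its $\C^*$-action has none), and the existence of such a neighborhood is part of what the proposition asserts. In the non-normal case one must work with the local ring or pass through the normalization, so this is not a free step. Second, the claim that $\Phi$ is ``\'etale at $x$ by construction'' is false when $X$ is singular at $x$ (which is the case of interest for rational cells); what you actually get is that $\Phi^*$ is surjective on completed local rings, hence $\Phi$ is a closed immersion near $x$. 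More substantively, the real content of (i)$\Rightarrow$(ii) is that the attractive set is \emph{open}: inside a $T$-stable affine $V$ the locus $\{y:\lim_{t\to 0}\lambda(t)y=x\}$ is a priori \emph{closed} (cut out by the eigenfunctions of non-negative $\lambda$-weight in $\mathfrak{m}_x$), and one must show it contains a neighborhood of $x$. The argument is exactly the boundedness you point to --- such an eigenfunction has image zero in every $\mathfrak{m}_x^k/\mathfrak{m}_x^{k+1}$ because all weights there are strictly negative, hence it lies in $\bigcap_k\mathfrak{m}_x^k\mathcal{O}_{X,x}=0$ by Krull and vanishes near $x$ --- but your write-up folds this into ``possibly after shrinking $V$'' without noticing that openness (and affineness) of the shrunken set is precisely the nontrivial point. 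With these two repairs the proof goes through.
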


\begin{lem}
Let $X$ be an irreducible affine variety with a $T$-action and an attractive fixed point $x_0 \in X$.
Then $X$ is rationally smooth at $x_0$ if and only if $X$ is rationally smooth everywhere. 
\end{lem}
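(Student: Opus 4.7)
The ``only if'' direction is the nontrivial one; the converse is immediate from the definition. The plan is to propagate rational smoothness from $x_0$ to an arbitrary point of $X$ by using the attracting one-parameter subgroup to transport the local cohomological data.

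First, I would invoke Proposition~\ref{tang.cell} to extract a one-parameter subgroup $\lambda:\C^*\to T$ such that the contracting locus
\[
X_{x_0} \;=\; \{\, y \in X \;|\; \lim_{t\to 0} \lambda(t)\, y = x_0 \,\}
\]
is the \emph{unique} affine $T$-invariant open neighborhood of $x_0$ in $X$. Since $X$ itself is irreducible, affine and $T$-invariant and contains $x_0$, it qualifies as such a neighborhood, so uniqueness forces $X_{x_0}=X$; in particular, the contraction to $x_0$ is globally defined on $X$. Second, I would observe that rational smoothness is automatically an open condition: unwinding the definition, a neighborhood $U$ of $x_0$ that witnesses rational smoothness at $x_0$ simultaneously witnesses rational smoothness at every point of $U$, so the rationally smooth locus contains the open set $U$.

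Finally comes the transport-of-structure step. Given any $y \in X$, the global contraction yields $\lambda(t)\, y \in U$ for all sufficiently small $|t|>0$, whence $\lambda(t)\, y$ is rationally smooth. Since $\lambda(t)$ acts on $X$ as an algebraic automorphism, it induces a homeomorphism of pairs $(X,X\setminus\{y\}) \xrightarrow{\sim} (X, X\setminus\{\lambda(t)\,y\})$, and therefore an isomorphism $H^m(X,X\setminus\{y\})\cong H^m(X,X\setminus\{\lambda(t)\,y\})$ in every degree. The rational smoothness of $\lambda(t)\, y$ then transfers back to $y$, and the arbitrariness of $y$ concludes the argument. I do not foresee any substantive obstacle; the only delicate point is the identification $X_{x_0}=X$, which ensures that the flow $\lambda$ reaches every point of $X$ and not merely an analytic neighborhood of $x_0$.
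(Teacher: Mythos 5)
Your proof is correct and follows essentially the same route as the paper: both arguments first use the attractiveness of $x_0$ together with affineness (via the uniqueness statement in Proposition~\ref{tang.cell}) to conclude that the contracting set of $x_0$ is all of $X$, and then observe that the rationally smooth locus is a nonempty open $T$-stable subset containing $x_0$, hence must be everything --- your explicit transport of the local cohomology groups along the homeomorphism $\lambda(t)$ is exactly the $T$-stability used in the paper.
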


\begin{proof}
If $X$ is rationally smooth everywhere, 
then it is rationally smooth at $x_0$. 
For the converse, we use 
Proposition \ref{tang.cell} (ii) and the 
affineness of $X$ 
to guarantee the existence of 
a one-parameter subgroup $\lambda:\C^*\to T$
such that
$$X=\{y\in X \,|\, \lim_{t\to 0}\lambda(t)\, y=\,x_0 \}.$$
In symbols, $x_0\in \overline{\C^*\cdot y}$, for any $y\in X$.
Now consider the complex topology on $X$. We claim that 
any non-empty open  
$T$-stable subset of $X$ containing $x_0$ is all of $X$.
In effect, let $U$ be a $T$-stable neighborhood of $x_0$.
Then, for any $y\in X$,
there exists $s_y\in \C^*$, such that $s_y\cdot y \in U$.
Indeed, 
because $x_0$ is attractive, one can find
a sequence $\{t_n\}\subset \C^*$ such that $t_n\cdot y$ converges to $x_0$.
That is, there exists $N$ with the property that 
$t_N\cdot y$ belongs to $U$. Setting $s_y=t_N$ yields $s_y\cdot y\in U$. 
However, $U$ is $T$-stable, and therefore it contains the entire orbit 
$\C^*\cdot y$.
In short, $y\in U$ or, equivalently, 
$U=X$.

Hence, the non-empty open $T$-stable subset of rationally smooth points of $X$ is,
{\it a fortiori}, equal to $X$.
\end{proof}

\begin{dfn} \label{rational.cell}
Let $X$ be an irreducible affine variety with a $T$-action and an attractive fixed point $x_0\in X$. 
If $X$ is rationally smooth at $x_0$ (and thus everywhere),  
we refer to $(X,x_0)$ as a {\bf rational cell}.
\end{dfn}

It follows from Definition \ref{rational.cell} and Proposition \ref{tang.cell} 
that if $(X,x_0)$ is a rational cell, then 
$$X=\{y\in X \,|\, \lim_{t\to 0}\lambda(t)\, y=\,x_0 \},$$
for a suitable one-parameter subgroup $\lambda$. 
Notably, $\{x_0\}$ is the unique closed $T$-orbit in $X$.

\begin{ex}
Certainly $\C^n$ is a rational cell with the usual $\C^*$-action by scalar multiplication. 
Here the origin is the unique attractive fixed point.  
\end{ex}

\begin{ex}
Let $V=\{xy=z^2\} \subset \C^3$. 
The standard $\C^*$-action by scalar multiplication makes $V$ a rational cell 
with $(0,0,0)$ as its attractive fixed point. 
This is clear once we
observe that $V$ is the quotient of $\C^2$
by the finite group with two elements, where the non-trivial element
acts on $(s,t)\in \C^2$ via $(s,t)\mapsto (-s,-t)$. 
So Proposition A1 (iii) of \cite{bri:rat} implies that $V$ is rationally smooth.
\end{ex}

\begin{ex}
A normal variety is not necessarilly rationally smooth. For instance,
consider the hypersurface $H\subset \C^4$ defined by $\{xy=uv\}$. 
Because
the singular locus of $H$, namely $\{(0,0,0,0)\}$, has codimension three,
it follows that $H$ is normal (\cite{sha:alg}, p. 128, comments after Theorem II.5.1.3). 
Nevertheless,
$H$ is not rationally smooth at the origin.
To see this, let $T=(\C^*)^2$ act on $H$ via $(t,s)\cdot(x,y,u,v)=(tx,ts^2y,su,st^2v)$.
Then $H$ has the origin as its unique attractive fixed point.
Moreover, $H$ contains four $T$-invariant curves (the four coordinate axes) passing through $(0,0,0,0)$. 
If $H$ were rationally smooth at the origin, then, by a result of Brion (Theorem \ref{cell.curves.thm}),
the dimension of $H$ would equal the number of its $T$-invariant curves. This is a contradiction, since
$H$ is only three dimensional.
\end{ex}

\begin{dfn}
Let $Z$ be a rationally smooth complex projective variety. Let $n$ be the (complex) dimension of $Z$.
We say that $Z$ is a {\bf rational cohomology complex projective space} if
there is a ring isomorphism
$$H^*(Z)\simeq \Q[t]/(t^{n+1}),$$
where $deg(t)=2$. 
\end{dfn}

\medskip

Let $(X,x)$ be a rational cell. 
Then, by Proposition \ref{tang.cell}, $X$ admits a closed $T$-equivariant embedding into $T_xX$. 
Set $\dot{X}$ to be $X-\{x\}$. 
Choose an injective one-parameter subgroup $\lambda:\C^*\to T$ as in Definition \ref{rational.cell}. Then all weights of the $\C^*$-action
on $T_xX$ via $\lambda$ are positive. Thus, the quotient
$$\P(X):=\dot{X}/\C^*$$
exists and is a projective variety (\cite{bri:rat}). 
Indeed, it is a closed subvariety of $\mathbb{P}(T_xX)$, a weighted projective space. 
The variety $\P(X)$ can be viewed
as an algebraic version of the link of $X$ at $x$.

\smallskip

The following result, except for parts (b) and (c), is due to Brion (\cite{bri:rat}). 
The idea of the proof of part (b) is due to Renner.

\begin{thm}\label{topratcell.thm}
Let $(X,x_0)$ be a rational cell of dimension $n$. Then,

\medskip

\noindent (a) $X$ is contractible.

\medskip

\noindent (b) $X-\{x_0\}$ is homeomorphic to $\mathbb{S}(X)\times \R^+$, where $\S(X):=X-\{x_0\}/\R^+$ is a compact topological space.

\medskip

\noindent (c) $X-\{x_0\}$ deformation retracts to $\S(X)$. 
In addition, $X$ is rationally smooth at $x_0$ if and only if $X-\{x_0\}$, and thus $\S(X)$, is a rational cohomology sphere $\mathbb{S}^{2n-1}$.

\medskip

\noindent (d) The space $\P(X)=X-\{x_0\}/\C^*$ 
is a rationally smooth complex projective variety of dimension $n-1$. 
Furthermore, $X$ is rationally smooth if and only if $\P(X)$ is a rational cohomology complex projective space $\mathbb{CP}^{n-1}$.
%
\end{thm}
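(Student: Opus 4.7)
My plan is to reduce everything to explicit computations inside a linear representation via Proposition \ref{tang.cell}. Fix a one-parameter subgroup $\lambda:\C^*\to T$ witnessing attractivity at $x_0$ together with a closed $T$-equivariant embedding $X\hookrightarrow T_{x_0}X$. In $\lambda$-eigencoordinates $T_{x_0}X\cong\C^N$, the action reads $\lambda(t)\cdot(v_1,\ldots,v_N)=(t^{a_1}v_1,\ldots,t^{a_N}v_N)$ with positive weights $a_i>0$, and I identify $X$ with its image in $\C^N$ throughout.

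For (a), I take the map $H:X\times[0,1]\to X$ defined by $H(y,s)=\lambda(s)y$ for $s>0$ and $H(y,0)=x_0$; positivity of the $a_i$ together with local boundedness of the coordinates $v_i$ forces continuity at $s=0$, producing a deformation retraction of $X$ onto $\{x_0\}$. For (b), I introduce a weighted norm $f:\C^N\to\R_{\ge 0}$ given by $f(v)=\sum_{i=1}^{N}(v_i\overline{v_i})^{L/a_i}$, where $L$ is a common multiple of the weights; this function is real-analytic, vanishes only at the origin, and satisfies $f(\lambda(t)v)=t^{2L}f(v)$ for $t\in\R^+$. Setting $\S(X):=f^{-1}(1)\cap\dot X$, this set is closed in the bounded set $\{|v_i|\le 1\text{ for all }i\}$ and therefore compact. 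The $\R^+$-action is free on $\dot X$ (any nonzero coordinate forces $t=1$ in the equation $t^{a_i}v_i=v_i$), and the map $(y,t)\mapsto\lambda(t)y$ is a homeomorphism $\S(X)\times\R^+\to\dot X$ with inverse $y\mapsto\bigl(\lambda(f(y)^{-1/(2L)})y,\,f(y)^{1/(2L)}\bigr)$.

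For (c), the product decomposition from (b) yields a deformation retraction of $\dot X$ onto $\S(X)$ by collapsing the $\R^+$-factor to $1$. Combining the long exact sequence of the pair $(X,\dot X)$ with contractibility from (a) gives $H^m(X,\dot X)\cong\widetilde H^{\,m-1}(\dot X)\cong\widetilde H^{\,m-1}(\S(X))$; hence the defining condition of rational smoothness at $x_0$, namely $H^m(X,\dot X)=\Q$ in degree $2n$ and $0$ otherwise, translates precisely into $\S(X)$ being a rational cohomology sphere $\S^{2n-1}$.

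For (d), the positivity of the $a_i$ ensures that the $\C^*$-action on $\dot X$ has only finite isotropy, so the quotient $\P(X)=\dot X/\C^*$ exists as a closed subvariety of dimension $n-1$ inside the weighted projective space $\P(T_{x_0}X)$. Rational smoothness of $\P(X)$ then follows from the quotient results of \cite{bri:rat}, using that $\dot X$ is rationally smooth by the preceding lemma applied to the rational cell $X$. For the final equivalence, I realize $\P(X)=\S(X)/\S^1$ with $\S^1\subset\C^*$ again acting with finite isotropy; passing to the Borel construction identifies $H^*(\P(X))$ with $H^*_{\S^1}(\S(X))$ rationally, and the Serre spectral sequence of the fibration $\S(X)\to\S(X)_{\S^1}\to B\S^1$ collapses once $\S(X)$ is a rational $\S^{2n-1}$, yielding $H^*(\P(X))\cong\Q[t]/(t^n)$ with $\deg t=2$, i.e.\ the cohomology of $\CP^{n-1}$. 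The main obstacle I anticipate lies precisely in (d): carefully controlling the finite isotropies of the $\C^*$- and $\S^1$-actions when passing to quotients is where the rational-smoothness machinery of \cite{bri:rat} is essential.
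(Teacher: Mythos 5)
Your argument for parts (a)--(c) is essentially the paper's own: the paper likewise contracts $X$ by extending the $\C^*$-action to $\C\times X\to X$, works inside the closed equivariant embedding $X\hookrightarrow T_{x_0}X\simeq\C^d$ with positive weights, trivializes the principal $\R^+$-bundle $\dot X\to\S(X)$ by a weighted-norm section (the paper uses $N(z)=\sqrt{\sum_i(z_i\overline{z_i})^{1/m_i}}$, which is degree-one homogeneous, where you use $f=\sum_i(v_i\overline{v_i})^{L/a_i}$; both identify $\S(X)$ with a compact level set), and then reads off (c) from the splitting of the long exact sequence of $(X,\dot X)$. The real divergence is in (d): the paper simply cites Brion (\cite{bri:rat}, Lemma 1.3), whereas you sketch a proof via finite isotropy and the Borel construction $H^*(\P(X))\simeq H^*_{\S^1}(\S(X))$. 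That route is viable, but two points need care. First, the Serre spectral sequence of $\S(X)\to\S(X)_{\S^1}\to B\S^1$ does \emph{not} collapse at $E_2$ (that would give $\Q[u]\otimes H^*(\S^{2n-1})$, not $\Q[t]/(t^n)$); rather it is a two-row spectral sequence whose single transgression $d_{2n}$ is multiplication by a nonzero class in $H^{2n}(B\S^1)$ -- nonzero because $H^*_{\S^1}(\S(X))\simeq H^*(\P(X))$ vanishes above degree $2n-2$ -- and it is this nondegenerate differential that produces $\Q[t]/(t^n)$. Second, you only argue one direction of the ``if and only if'' in (d); the converse (recovering that $\S(X)$ is a rational $\S^{2n-1}$ from $H^*(\P(X))\simeq\Q[t]/(t^n)$) follows from the same two-row spectral sequence, or from the rational Gysin sequence of $\S(X)\to\P(X)$, and should be said explicitly. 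These are repairable imprecisions in the one part the paper itself outsources, not gaps in the parts it proves.
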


\begin{proof}
For part (a) simply notice that the action of 
$\C^*$ on $X$ extends to a map $\C\times X \to X$ sending $0\times X$ to $x_0$ and restricting to the identity $1\times X \to X$. 
Since the proof of (d) can be found in \cite{bri:rat}, Lemma 1.3,  it suffices to prove parts (b) and (c).

\smallskip

(b) 
%
%
%
From Proposition \ref{tang.cell}, we know that $X$ admits a closed $T$-equivariant embedding into $T_{x_0}X\simeq \C^d$, 
which identifies $x_0$ with $0$.
Choosing a one-parameter subgroup $\lambda:\C^*\to T$ as in Definition \ref{rational.cell} yields
a $\C^*$-action on $\C^d$ with only positive weights $m_1,\ldots, m_d$. Specifically, $\lambda\in \C^*$
acts on $\C^d$ via 
$$\lambda\cdot(z_1,\ldots, z_d)=(\lambda^{m_1}z_1,\ldots, \lambda^{m_d}z_d).$$

Next, define an $\R^+$-equivariant 
map
$N:\C^d\to \R$ 
by 
$$N(z_1,\ldots,z_d)=\sqrt{\sum_{i=1}^{d}(z_i\overline{z_i})^{1/{m_i}}}.$$ 
Clearly, for $\lambda \in \C$ and $z\in \C^d$, the definition favors $N(\lambda \cdot z)=|\lambda|N(z)$ 
(here $\lambda \cdot z$ means $(\lambda^{m_1}z_1,\ldots, \lambda^{m_d}z_d)$).
      

Since $\R^+$ acts freely on $X-\{0\}\subseteq \C^d-\{0\}$, the quotient map
$$X-\{0\}\to \S(X)$$
is a principal $\R^+$-fibration. 
We claim that this fibration is trivial, i.e. $$X-\{0\}\simeq \S(X)\times \R^+.$$ 

To prove the claim, we just need to provide a global section $s$.
In fact, we can do so canonically. Let $s:\S(X)\to X-\{0\}$ be the map defined by $$s([x])=\frac{1}{N(x)}\cdot x.$$
This map is well defined (given that we are using the $\C^*$-action mentioned above) and not only defines a global section,
but also a homeomorphism between $\S(X)$ and $X\cap N^{-1}(1)$, where $N^{-1}(1)$ is the ``unit" sphere. 
Thus, $\S(X)$ is compact.

\medskip

(c) The first claim follows immediately from part (b). 
As for the second assertion, remember that $X$ is contractible. Thus,
the long exact sequence of the pair $(X,X-\{x_0\})$
splits into short exact sequences
$$
0\longrightarrow H^*(X-\{x_0\}) \longrightarrow H^{*+1}(X,X-\{x_0\})\longrightarrow 0.
$$
Therefore $X$ is rationally smooth if and only if $X-\{x_0\}$ is a rational homology sphere of dimension $2n-1$.
\end{proof}


\begin{cor}
Keeping the same notation as in Theorem \ref{topratcell.thm}, 
the rational cell $X$ is homeomorphic to the open cone over $\S(X)$. Moreover, $\P(X)$ is equivariantly formal.
\end{cor}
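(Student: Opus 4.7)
The plan is to construct the cone homeomorphism directly from the ingredients already assembled in the proof of Theorem \ref{topratcell.thm}(b), and then to deduce equivariant formality of $\P(X)$ from the rational cohomology computation of Theorem \ref{topratcell.thm}(d).

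For the first assertion, I would recycle the closed $T$-equivariant embedding $X \hookrightarrow T_{x_0}X \simeq \C^d$ provided by Proposition \ref{tang.cell}, the attracting one-parameter subgroup $\lambda:\C^* \to T$ with positive weights $m_1,\ldots,m_d$, the $\R^+$-equivariant norm $N$, and the section $s:\S(X) \to X \cap N^{-1}(1)$. I would then define
\[
\Phi : \S(X) \times [0,\infty) \longrightarrow X, \qquad (p,r) \longmapsto r \cdot s(p),
\]
where the $\R^+$-action extends continuously to $[0,\infty)$ by sending $0$ to $x_0$ (a valid extension because all weights $m_i$ are strictly positive). This map is continuous, surjective, collapses $\S(X)\times \{0\}$ to $x_0$, and restricts on $\S(X)\times \R^+$ to the homeomorphism onto $X - \{x_0\}$ produced in Theorem \ref{topratcell.thm}(b). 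Hence $\Phi$ factors through a continuous bijection $\tilde\Phi : c(\S(X)) \to X$, where $c(\S(X)) := \S(X) \times [0,\infty)/(\S(X) \times \{0\})$ denotes the open cone. Positivity of the weights makes $N:\C^d \to [0,\infty)$ proper, and hence so is $N|_X$; this forces $\tilde\Phi$ to be proper, and a proper continuous bijection between locally compact Hausdorff spaces is automatically a homeomorphism.

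For the equivariant formality of $\P(X)$, I would observe that since $\lambda(\C^*)$ is a subtorus of $T$, the quotient torus $T/\lambda(\C^*)$ acts algebraically on $\P(X) = \dot X/\C^*$. Because $(X,x_0)$ is a rational cell, $X$ is rationally smooth everywhere (Definition \ref{rational.cell} together with Lemma 3.3), so Theorem \ref{topratcell.thm}(d) delivers a graded ring isomorphism $H^*(\P(X)) \simeq \Q[t]/(t^n)$ with $\deg t = 2$. In particular $\P(X)$ has no rational cohomology in odd degrees, and the implication (d)$\Rightarrow$(a) of Theorem \ref{ch.formal.thm}, applied to the $T/\lambda(\C^*)$-variety $\P(X)$, yields equivariant formality.

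The principal technical point I anticipate is confirming that $\tilde\Phi$ is in fact a homeomorphism across the apex; this reduces cleanly to the properness of $N$, which is transparent from its explicit formula together with the positivity of the weights $m_1,\ldots,m_d$. All remaining ingredients have already been established earlier in the paper.
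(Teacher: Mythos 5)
Your proposal is correct and follows essentially the same route as the paper: the cone structure is extracted from the trivialization $X-\{x_0\}\simeq \S(X)\times\R^+$ of Theorem \ref{topratcell.thm}(b)--(c) (you merely make explicit the extension of the $\R^+$-action across the apex and the properness argument, which the paper leaves implicit), and the equivariant formality of $\P(X)$ is deduced exactly as in the paper from Theorem \ref{topratcell.thm}(d) together with Theorem \ref{ch.formal.thm}.
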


\begin{proof}
The first assertion follows 
from Theorem \ref{topratcell.thm} (c). 
As for the second, 
by Theorem \ref{topratcell.thm} again, 
$\P(X)$ is a rational cohomology complex projective space and thus
has no cohomology in odd degrees. 
Theorem \ref{ch.formal.thm} concludes the proof. 
\end{proof}

\begin{prop}
Let $(X,x_0)$ be a rational cell of dimension $n$. Denote by $X^+$ its one point compactification. Then
$X^+$ is simply connected and has the rational homotopy type of $\mathbb{S}^{2n}$, the Euclidean $2n$-sphere. 
\end{prop}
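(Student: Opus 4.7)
The plan is to realize $X^+$ as the unreduced suspension of $\S(X)$ and then exploit Theorem \ref{topratcell.thm}(c). The starting point is the trivialization $X - \{x_0\} \cong \S(X) \times \R^+$ from the proof of Theorem \ref{topratcell.thm}(b); extending it by sending the slice $\S(X) \times \{0\}$ to $x_0$ realizes $X$ as the open cone $C\S(X) = \S(X) \times [0, \infty) / \S(X) \times \{0\}$. Since $\S(X)$ is compact, the one-point compactification of $C\S(X)$ is the unreduced suspension, so $X^+ \cong \Sigma \S(X)$.

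From this identification, the two claims follow by standard arguments. First, $\S(X)$ is path-connected: $X - \{x_0\}$ is an irreducible complex algebraic variety of positive dimension, so it is connected in the analytic topology, and because complex varieties are locally path-connected, it is in fact path-connected; the quotient $\S(X)$ inherits this property. Van Kampen applied to the cover of $\Sigma \S(X)$ by the two open contractible cones, whose intersection deformation retracts to $\S(X)$, then gives $\pi_1(X^+) = 1$. Second, combining the reduced suspension isomorphism with Theorem \ref{topratcell.thm}(c) shows that $\tilde{H}^k(X^+; \Q) \cong \tilde{H}^{k-1}(\S(X); \Q)$, which is $\Q$ in degree $2n$ and vanishes elsewhere.

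The main obstacle is the last step: upgrading this rational cohomology computation to a statement about rational homotopy type. I would appeal to the rational Hurewicz theorem: since $X^+$ is simply connected with vanishing reduced rational homology below degree $2n$, we have $\pi_{2n}(X^+) \otimes \Q \cong H_{2n}(X^+; \Q) = \Q$, and a nonzero homotopy class is realized by a map $f: \S^{2n} \to X^+$ that induces an isomorphism on rational homology in every degree. The rational Whitehead theorem for simply connected spaces then concludes that $f$ is a rational equivalence, so $X^+$ has the rational homotopy type of $\S^{2n}$.
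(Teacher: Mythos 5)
Your proof is correct and follows essentially the same route as the paper: both realize $X^+$ as the union of two open cones over $\S(X)$ glued along $\S(X)\times(\epsilon,1)$, apply van Kampen for simple connectivity, and compute the rational cohomology from Theorem \ref{topratcell.thm} (the paper via Mayer--Vietoris on that cover, you via the suspension isomorphism --- the same computation). The only substantive difference is that you explicitly carry out the upgrade from ``simply connected rational cohomology $2n$-sphere'' to ``rational homotopy type of $\S^{2n}$'' via the rational Hurewicz and Whitehead theorems, a step the paper leaves implicit after declaring the cohomology computation a simple exercise.
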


\begin{proof}
First, observe that $X^+$ is path-connected.
As a consequence of Theorem \ref{topratcell.thm}, we can write $X^+$ as a union of two open cones $D_0$ and $D_\infty$; 
namely, $D_0=S\times [0,1)/S\times \{0\}$ and $D_\infty=S\times (\epsilon, \infty]/S\times \{\infty\}$,
where $S$ stands for $\S(X)=(X\setminus \{x_0\})/\R_+$, and $\epsilon$ is a positive number less than $1$. 
Given that $X-\{x_0\}$ is path-connected, the intersection $D_0\cap D_\infty = S\times (\epsilon, 1)$ is path-connected as well. 
In summary, $X^+$ can be written as the union of two contractible open subsets
with path-connected intersection. 
Thus, by van Kampen's theorem, $X^+$ itself is simply connected.
To finish the proof, 
we need to show that $X^+$ is a rational cohomoloy $2n$-sphere. 
This is a simple exercise, using the Mayer-Vietoris sequence of 
the cover $\{D_0,D_\infty\}$.
\end{proof}

\begin{lem}[One-dimensional rational cells]\label{1dim.ch}
Let $(X,x)$ be a rational cell of dimension one. Then
\begin{enumerate}
\item $X$ is a cone over a topological circle.

\item $X$ is homeomorphic to $\C$.

\item If, additionally, $X$ is normal, then $X$ is isomorphic to $\C$ as an algebraic variety.
\end{enumerate}
\end{lem}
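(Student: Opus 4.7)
The key observation is that, when $n=1$, the projective variety $\P(X)=\dot X/\C^*$ produced by Theorem \ref{topratcell.thm}(d) has complex dimension $n-1=0$, and, being irreducible, is therefore a single point. This forces $\dot X = X\setminus\{x\}$ to be a single $\C^*$-orbit for the attracting one-parameter subgroup $\lambda:\C^*\to T$ supplied by Definition \ref{rational.cell}. Since this orbit has dimension one, every stabilizer is finite, so $\dot X$ is algebraically isomorphic to $\C^*/\mu_k\cong \C^*$ for some $k\ge 1$.

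Once $\dot X\cong\C^*$ is in hand, (1) drops out by computing $\S(X)=\dot X/\R^+ \cong \C^*/\R^+\cong \S^1$ and invoking the corollary immediately following Theorem \ref{topratcell.thm}, which realizes $X$ as the open cone over $\S(X)$. Assertion (2) then follows at once, since the open cone over $\S^1$ is tautologically homeomorphic to $\R^2\cong \C$; concretely, extending the canonical section $s:\S(X)\to\dot X$ built in the proof of Theorem \ref{topratcell.thm}(b) by sending the cone point to $x$, and combining it with the norm function $N$ introduced there, yields an explicit homeomorphism $X\to\C$.

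For (3), the normality hypothesis becomes essential. A one-dimensional normal affine variety is automatically smooth, and we already know $\dot X\cong\C^*$ algebraically, so $X$ is a smooth affine completion of $\C^*$ by one additional point; in particular its function field is $\C(t)$. Since $\CP^1$ is the unique smooth projective curve with that function field, there is an open embedding $X\hookrightarrow \CP^1$ whose image contains $\C^*=\CP^1\setminus\{0,\infty\}$. Hence $X=\CP^1\setminus\{p\}$ for some $p\in\{0,\infty\}$, giving $X\cong\C$.

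The main obstacle is precisely this upgrade from the topological description in (2) to the algebraic statement in (3). Without normality, $X$ could a priori be a singular affine curve (for instance, a cuspidal one) whose underlying topological space still matches the cone description of (1) and (2); it is the combination of normality with the $\C^*$-orbit structure of $\dot X$ that pins $X$ down algebraically as $\C$.
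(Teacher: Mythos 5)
Your argument is correct, but it reaches the key intermediate fact by a different route than the paper. Both proofs hinge on showing that $\dot X = X\setminus\{x\}$ is a single one-dimensional orbit, hence isomorphic to $\C^*$. The paper gets there directly and locally: the singular locus of a curve is a discrete $T$-invariant set, and since $\{x\}$ is the unique closed orbit it must be contained in $\{x\}$; one then reads off the two-orbit structure (attractive fixed point plus a dense orbit $\cong\C^*$) and concludes (1) and (2) at once. You instead go through the algebraic link: $\P(X)=\dot X/\C^*$ is an irreducible projective variety of dimension $0$ by Theorem \ref{topratcell.thm}(d), hence a point, hence $\dot X$ is one orbit. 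This is valid, but you should make explicit that $\P(X)$ is a \emph{geometric} quotient (true here because $X$ embeds $T$-equivariantly and closedly in $T_xX$ with all weights positive, so $\P(X)$ sits inside a weighted projective space and its points really are orbits); otherwise ``$\P(X)$ is a point'' would not immediately give ``$\dot X$ is one orbit.'' For (3) the paper is terse, invoking only ``normal $+$ one-dimensional $\Rightarrow$ smooth'' and leaving the identification with $\C$ implicit (it follows, e.g., from the lemma immediately after this one: a rational cell that is smooth at its attractive fixed point is isomorphic to its tangent space, which here is $\C$). Your completion via the function field $\C(t)$ and the uniqueness of the smooth projective model $\CP^1$ is more self-contained on this point, at the cost of invoking the classification of smooth curves; the paper's implicit route is more elementary and stays within its own toolkit. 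Either way the conclusion stands.
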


\begin{proof}
Without loss of generality, we can assume that $T$ acts faithfully on $X$.
Thus, $T$ is isomorphic to $\C^*$.
Now assertions (1) and (2) can be proved as follows. 
Since $X$ is one-dimensional, then the singular locus 
is an invariant discrete set.
Nonetheless, $x_0$
is the unique attractive fixed point, and $\C^*$ is {\em connected}, 
so
the singular locus is either empty or consists of only one point, namely, $x_0$.
As a result, $X\setminus \{x_0\}$
is smooth. 
Next notice that $X$ has two $\C^*$-orbits: 
the attractive fixed point $x_0$, and a dense open orbit of the form $\C^*$.
Hence, $X$ is homeomorphic to $\C$ 
and it is a cone over the circle $S^1$.

Finally, if we also assume that $X$ is normal and one-dimensional, then {\em a fortiori} $X$ is smooth (\cite{har:ag}). 
This proves (3).
\end{proof}

\begin{lem}
Let $(X,x)$ be a rational cell. Suppose $x$ is a smooth point. Then $X$ is isomorphic to its tangent space at $x$.
\end{lem}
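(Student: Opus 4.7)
The plan is to derive the isomorphism directly from the tangent-space embedding of Proposition \ref{tang.cell}, using the smoothness hypothesis to upgrade a closed embedding into an equality by a dimension count. Nothing beyond the definition of a rational cell and elementary commutative algebra should be required.

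First I would observe that since $(X,x)$ is a rational cell, $X$ is by Definition \ref{rational.cell} an irreducible affine $T$-variety and $x$ is an attractive fixed point. Proposition \ref{tang.cell}, applied with $X_x=X$, therefore produces a closed $T$-equivariant embedding $\iota\colon X \hookrightarrow T_xX$. Next I would invoke the smoothness of $x\in X$: it forces the local ring $\mathcal{O}_{X,x}$ to be regular, so that $\dim_\C T_xX = \dim X$. Now $T_xX$ is irreducible (it is an affine space), and $\iota(X)$ is a closed irreducible subvariety having the same dimension as the ambient space, hence $\iota(X)=T_xX$. Therefore $\iota$ is an isomorphism of $T$-varieties and the lemma follows.

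I do not expect any substantive obstacle here: the single nontrivial ingredient is Proposition \ref{tang.cell}, whose hypotheses are built into the notion of a rational cell. The rational smoothness of $X$ plays no role beyond guaranteeing the closed embedding into the tangent space; it is the genuine smoothness at $x$ that does the real work, by equating $\dim T_xX$ with $\dim X$ and thereby collapsing the closed embedding into an equality.
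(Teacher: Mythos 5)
Your argument is correct and is essentially the paper's own proof: both rest on the closed $T$-equivariant embedding $X\hookrightarrow T_xX$ from Proposition \ref{tang.cell} together with the dimension count forced by smoothness at $x$. You merely make explicit the final step (a closed irreducible subvariety of the irreducible affine space $T_xX$ of full dimension must be everything), which the paper states more tersely.
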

\begin{proof}
By Proposition \ref{tang.cell}, we know that $X$ admits an equivariant {\em closed} embedding into $T_x X$. 
If $x$ is a smooth point, 
then both $X$ and $T_x X$ have the same dimension. 
For affine varieties this can only happen if $X=T_x X$.
\end{proof}


We are now ready to state what we call the
Equivariant 
Normalization Theorem for rational cells.
It is due to Brion (\cite{bri:ech}, Proof of Theorem 18, implication $(i)\Rightarrow(ii)$) 
and Arabia (\cite{ar:eu}, Section 3.2.1).

\begin{thm} \label{arabiamap.thm}
Let $(X,x)$ be a rational cell. 
Then there exists a $T$-module $V$ 
and an equivariant finite surjective map $\pi:X\to V$ such that
$\pi(x)=0$ and $V^T=\{0\}$.  \hfill $\square$
\end{thm}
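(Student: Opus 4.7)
The plan is to construct $\pi$ by an equivariant version of Noether normalization applied to $A := \mathcal{O}(X)$. By Proposition \ref{tang.cell}, $X$ embeds as a closed $T$-stable subvariety of the tangent space $W := T_x X$, with $x \mapsto 0$, and one may fix a one-parameter subgroup $\lambda : \C^* \to T$ such that every $T$-weight on $W$ pairs strictly positively with $\lambda$. The grading of $A$ induced by the resulting $\C^*$-action satisfies $A = \bigoplus_{k \ge 0} A_k$ with $A_0 = \C$, because every $\lambda$-orbit in $X$ limits to $x$, making any $\lambda$-invariant regular function constant. Each $A_k$ is a finite-dimensional $T$-module and hence decomposes into $T$-weight subspaces.

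The core step is to build, inductively on $k = 1, \ldots, n := \dim X$, $T$-weight vectors $f_1, \ldots, f_n \in A_+ := \bigoplus_{k \ge 1} A_k$ such that the common zero locus $Z_k := \{f_1 = \cdots = f_k = 0\} \subset X$ has dimension $n-k$. Suppose $f_1, \ldots, f_{k-1}$ have been found with $\dim Z_{k-1} = n - k + 1 > 0$. Each irreducible component $C_j$ of $Z_{k-1}$ is $T$-stable (since $T$ is connected) and of positive dimension, so its vanishing ideal $I(C_j) \subset A$ is a $T$-stable prime whose quotient $\mathcal{O}(C_j)$ contains nonconstants, hence elements of strictly positive $\lambda$-degree; in particular $A_+ \not\subset I(C_j)$. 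A multigraded prime avoidance argument, valid because $A$ is graded by $\Xi(T) \times \Z$ and the base field is infinite, yields a $T$-weight vector $f_k \in A_+$ lying in none of the primes $I(C_j)$. Hence $f_k$ does not vanish on any component of $Z_{k-1}$, forcing $\dim Z_k = n - k$.

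After $n$ steps, $Z_n$ is zero-dimensional, $T$-stable, and attracted to $x$ under $\lambda$, which forces $Z_n = \{x\}$. Graded Nakayama applied to $A$ then shows that $A$ is finite as a module over $B := \C[f_1, \ldots, f_n]$, and the $f_i$'s are automatically algebraically independent since $\dim A = n$. Set $V := \mathrm{Spec}(B) \simeq \C^n$, equipped with the $T$-module structure for which each coordinate $f_i$ retains its original $T$-weight $\chi_i$. Then the inclusion $B \hookrightarrow A$ corresponds to a $T$-equivariant finite morphism $\pi : X \to V$ with $\pi(x) = 0$, and surjectivity follows from finiteness together with dominance. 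Finally, each $\chi_i$ pairs nontrivially with $\lambda$ because $f_i$ has strictly positive $\lambda$-degree, so no weight of $V$ is zero and $V^T = \{0\}$.

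The principal obstacle is the inductive construction in the middle paragraph: classical Noether normalization produces parameters as \emph{generic linear combinations} of homogeneous elements, and this operation destroys the property of being a $T$-weight vector. The remedy, and the step requiring the most care, is the appeal to the $\Xi(T) \times \Z$-graded prime avoidance lemma, which secures a weight-homogeneous element of $A_+$ outside any prescribed finite collection of graded primes. Once this refinement of Noether normalization is in place, the remaining assertions — finiteness of $\pi$, surjectivity, and $V^T = \{0\}$ — follow by routine graded commutative algebra.
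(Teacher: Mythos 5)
The paper does not actually prove this statement; it cites Brion and Arabia for it, so there is no internal proof to compare against, and your proposal must stand on its own. Your overall architecture is the right one --- reduce to producing $n$ $T$-weight vectors $f_1,\dots,f_n\in A_+$ whose common zero locus is $\{x\}$, then get finiteness from graded Nakayama and surjectivity from closedness of finite maps --- but the step you yourself identify as the crux, the ``multigraded prime avoidance argument,'' is false, and with it the inductive construction collapses. For a grading by $\Xi(T)\times\Z$ with $\mathrm{rk}\,\Xi(T)\ge 2$ there is no homogeneous prime avoidance: take $A=\C[x,y]$ with the standard action of $T=(\C^*)^2$ and the graded primes $\mathfrak{p}_1=(x)$, $\mathfrak{p}_2=(y)$. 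Then $A_+\not\subset\mathfrak{p}_i$ for $i=1,2$, yet every $T$-weight vector of positive degree is a monomial $cx^ay^b$ with $a+b>0$ and hence lies in $\mathfrak{p}_1\cup\mathfrak{p}_2$. Concretely, your induction gets stuck already on the rational cell $X=\C^2$: the choice $f_1=xy$ is a legitimate weight vector with $\dim Z_1=1$, the components of $Z_1$ are the two coordinate axes, and no weight vector of positive degree avoids both of their ideals. The reason the usual $\mathbb{N}$-graded prime avoidance does not carry over is structural: there one equalizes degrees by raising the chosen elements to suitable powers and then takes a generic linear combination, but weight vectors of non-proportional $T$-weights can never be brought to a common weight by taking powers, and for a fine grading the individual weight spaces $A_\chi$ are typically one-dimensional, leaving no room for generic combinations. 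The infinitude of the base field does not rescue this.

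This is not a repairable slip of wording: the existence of a homogeneous system of parameters consisting of $T$-weight vectors is essentially the entire content of the theorem, and a greedy one-element-at-a-time construction cannot deliver it, since a bad choice of $f_1$ can make the continuation impossible even though a good system exists (in the $\C^2$ example, $f_1=x$, $f_2=y$). A correct argument has to choose the $f_i$ coherently, for instance by controlling which weights actually occur on the invariant subvarieties that arise --- in the example the two components of $V(xy)$ carry disjoint sets of nonzero weights, which is exactly the obstruction --- or by a different construction altogether, as in the proofs of Brion and Arabia that the paper cites. Everything after the middle paragraph of your proposal (graded Nakayama, algebraic independence of the $f_i$, surjectivity, and $V^T=\{0\}$) is correct once a weight-homogeneous system of parameters is in hand; the gap is precisely in producing one.
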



We now specialize a result of Brion (\cite{bri:rat}, Section 1.4, Corollary 2) 
to rational cells.

\begin{thm}\label{cell.curves.thm}
Let $(X,x)$ be a rational cell. 
Suppose that the number of closed irreducible $T$-stable curves
on $X$ is finite. Let $n(X,x)$ be this number. 
Then $n(X,x)=dim(X).$ \hfill $\square$
\end{thm}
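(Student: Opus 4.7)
The plan is to specialize Brion's general result (\cite{bri:rat}, Section 1.4, Corollary 2), which counts $T$-invariant curves passing through an isolated rationally smooth attractive fixed point. The only thing left to check in order to apply it directly is that, for a rational cell, the global count $n(X,x)$ of $T$-stable irreducible curves in $X$ agrees with the local count of $T$-stable curves through $x$.

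First I would exploit attractiveness. By Proposition \ref{tang.cell}, there exists a one-parameter subgroup $\lambda:\mathbb{C}^*\to T$ with $X=\{y\in X\mid \lim_{t\to 0}\lambda(t)\cdot y = x\}$, so that $x\in\overline{\mathbb{C}^*\cdot y}$ for every $y\in X$. If $C\subset X$ is any closed irreducible $T$-stable curve and $y\in C$, then $\overline{\mathbb{C}^*\cdot y}\subset C$ by $T$-stability and closedness, and hence $x\in C$. Therefore every closed irreducible $T$-stable curve in $X$ passes through $x$, and $n(X,x)$ coincides with the local invariant to which Brion's corollary applies. Since $x$ is attractive on the irreducible variety $X$, the variety is rationally smooth at $x$ by Definition \ref{rational.cell}, and the number of $T$-stable curves through $x$ is finite, the corollary yields $n(X,x)=\dim X$ at once.

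If one prefers a more self-contained route, the plan would be to exploit Theorem \ref{arabiamap.thm}. The equivariant finite surjection $\pi:X\to V$ onto a $T$-module with $V^T=\{0\}$ preserves $T$-stability and dimension, so $\dim X=\dim V$. The $T$-stable irreducible curves of $V$ are precisely the lines in the weight subspaces $V_\chi$, and finiteness of their total number forces every $V_\chi$ to be at most one-dimensional; hence $V$ has exactly $\dim V$ distinct $T$-stable lines $L_1,\ldots,L_{\dim V}$. One then sets up a bijection between these and the $T$-stable curves of $X$: each $\pi^{-1}(L_i)$ is a one-dimensional $T$-stable subvariety containing $x$, whose irreducible components are $T$-stable curves mapping finitely onto $L_i$.

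The main obstacle on this alternative route is showing that each $\pi^{-1}(L_i)$ is irreducible, so that one genuinely recovers $\dim V$ curves rather than a potentially larger count. This is where rational smoothness at $x$, combined with the attractiveness of $x$ on each $\pi^{-1}(L_i)$, is needed: each component of $\pi^{-1}(L_i)$ through $x$ would itself be a one-dimensional rational cell, and Lemma \ref{1dim.ch} would identify it with $\mathbb{C}$; one then checks, via the weight calculation on $T_xX$ forced by rational smoothness, that no two such components can share the same weight. The cleanest way to bypass this delicate local analysis is simply to invoke Brion's theorem, as in the first approach.
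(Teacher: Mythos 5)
Your first argument is correct and matches the paper, which offers no independent proof but simply cites Brion (\cite{bri:rat}, Section 1.4, Corollary 2); your added check that attractiveness forces every closed irreducible $T$-stable curve to pass through $x$ (so the global and local counts agree) is a correct and worthwhile clarification of that citation. The second, self-contained route is not needed and, as you note, has an unresolved irreducibility issue, so the first approach is the right one.
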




\section{Homology and Betti numbers of $\Q$-filtrable spaces}

We define $\Q$-filtrable varieties,
spaces that come equipped with a paving
by rational cells, and show that
they are equivariantly formal (Theorem \ref{eqforfiltration.thm}).

\subsection{The Bialynicki-Birula decomposition}
Let $X$ be a projective algebraic variety with a $\mathbb{C}^*$-action and
a finite number of fixed points $x_1,\ldots,x_m$. Consider the associated
BB-decomposition $X=\bigsqcup_i W_i$, where each cell is defined as follows
$$W_i=\{x \in X\,|\, \displaystyle\lim_{t\to 0}{t\cdot x=x_i}\}.$$

\smallskip

In the present section, we show that rational cells are 
a good substitute for 
the notion of affine space in the topological study of singular varieties. 


\begin{rem}
In general, the $BB$-decomposition of a projective variety is not a stratification; that is, it may happen that the closure of a cell
is not the union of cells, even if we assume our variety to be smooth. For a justification of this claim, see \cite{bb:decomp}.
\end{rem}


\begin{dfn}
Let $X$ be a complex algebraic variety endowed with a $\mathbb{C}^*$-action
and a finite number of fixed points.
A {\bf BB-decomposition} $\{W_i\}$ is said to be {\bf filtrable} 
if there exists a finite increasing sequence $X_0\subset X_1\subset \ldots \subset X_m$
of closed subvarieties of $X$ such that:

\medskip

\noindent a) $X_0=\emptyset$, $X_m=X$,

\medskip

\noindent b) For each $j=1,\ldots, m$, the ``stratum'' $X_{j}\setminus X_{j-1}$ is a cell of the decomposition $\{W_i\}$.
\end{dfn}  

The following result is due to Bialynicki-Birula (\cite{bb:decomp}). 

\begin{thm}
Let $X$ be a normal projective variety with a torus action and a finite number of fixed points. 
Then the stable decomposition is filtrable. \hfill $\square$
\end{thm}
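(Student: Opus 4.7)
The plan is to combine Sumihiro's equivariant embedding theorem with a weight-filtration argument in projective space. By Sumihiro's theorem, the normal projective $T$-variety $X$ admits a $T$-equivariant closed embedding $X \hookrightarrow \P(V)$, where $V = \bigoplus_\chi V_\chi$ is a finite-dimensional $T$-module. Each fixed point $x_i \in X^T$ then lies on a $T$-stable line whose weight I denote by $\chi_i \in \Xi(T)$.

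I would next choose a one-parameter subgroup $\lambda: \C^* \to T$ generic enough that $X^\lambda = X^T$ and that the integers $w_i := \langle \lambda, \chi_i \rangle$ are pairwise distinct. Such a $\lambda$ exists, since both conditions amount to avoiding finitely many hyperplanes in the cocharacter lattice of $T$. The cells $W_i := \{x \in X \,|\, \lim_{t \to 0} \lambda(t)\cdot x = x_i\}$ then constitute the stable decomposition, and after relabeling the fixed points so that $w_1 > w_2 > \ldots > w_m$, I would set $X_i := W_1 \cup \cdots \cup W_i$, with $X_0 := \emptyset$. By construction, $X_i \setminus X_{i-1} = W_i$, $X_m = X$, and $X_0 = \emptyset$; everything reduces to showing that each $X_i$ is closed in $X$.

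For this step I would use the explicit description of the limit: for $y = [v] \in \P(V)$ with $v = \sum_\chi v_\chi$, one has $\lim_{t\to 0}\lambda(t)\cdot y = [v_{\chi^*}]$, where $\chi^*$ minimizes $\langle \lambda, \chi\rangle$ over those $\chi$ with $v_\chi \neq 0$. Hence $y \in W_i$ if and only if the minimum weight appearing in a representative of $y$ equals $w_i$. Now take a convergent sequence $y_n \to y$ in $X$, choose representatives $v_n \to v \neq 0$ in $V$ (after suitable rescaling), and observe that any $\chi$ with $v_\chi \neq 0$ satisfies $v_{n,\chi} \neq 0$ for large $n$. Thus the minimum weight of $v$ is at least that of $v_n$ for $n \gg 0$. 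In particular, if $y_n \in W_i$ and $y_n \to y \in W_j$, then $w_j \geq w_i$, so $j \leq i$ in our decreasing ordering. This shows $\overline{W_i} \subset X_i$, and since $X_i$ is a finite union of such closures, it is itself closed.

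The main technicality I foresee is the passage from convergence in $\P(V)$ to convergence of representatives in $V$: one must normalize the $v_n$ appropriately to ensure the limit $v$ is nonzero, so that comparison of minimum weights is meaningful. This is a standard move with projective spaces, and once carried out carefully the monotonicity of minimum weights along convergent sequences yields the closedness of the $X_i$ with no further effort.
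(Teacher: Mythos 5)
Your overall strategy---linearize via Sumihiro, order the fixed points by the pairing of their weight characters with a one-parameter subgroup, and show that cell closures only pick up cells of larger weight---is the right one; it is essentially Bia{\l}ynicki-Birula's own argument, and the paper does not reprove the theorem but simply cites \cite{bb:decomp}. However, there is a genuine gap in your genericity claim. You assert that $\lambda$ can be chosen so that the integers $w_i=\langle\lambda,\chi_i\rangle$ are pairwise distinct because this amounts to avoiding finitely many hyperplanes in the cocharacter lattice. That is true only when the characters $\chi_i$ are themselves pairwise distinct: if two isolated fixed points lie in the same weight space $V_\chi$ of $V$, then $w_i=w_j$ for \emph{every} $\lambda$, and no hyperplane is being avoided. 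This really happens: let $T=\C^*$ act on $\P^3$ with weights $0,1,1,2$; the smooth quadric $x_0x_3=x_1x_2$ is $T$-stable, normal, and has exactly four fixed points, of which $[0{:}1{:}0{:}0]$ and $[0{:}0{:}1{:}0]$ carry the same character. Once ties are present, your non-strict conclusion $w_j\geq w_i$ no longer gives $j\leq i$: it leaves open the possibility that two cells of equal weight each meet the other's closure, which would defeat the filtration.

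The gap is localized and fixable by sharpening the limit analysis from ``$w_j\geq w_i$'' to ``either $y\in W_i$ or $w_j>w_i$ strictly.'' If $y_n\in W_i$, then the minimal $\lambda$-weight of a normalized representative $v_n$ equals $w_i$ and its minimal-weight component $v_{n,[w_i]}$ must span the line $L_i$ defining $x_i$ (this is exactly the statement $\lim_{t\to 0}\lambda(t)\cdot y_n=x_i$; note also that when several characters attain the minimum the limit is $\bigl[\sum_\chi v_\chi\bigr]$ over all minimizers, not $[v_{\chi^*}]$ for a single one). Hence $v_{n,[w_i]}=c_nu_i$ for a fixed spanning vector $u_i$ of $L_i$, and passing to the limit, $v_{[w_i]}$ is a scalar multiple of $u_i$: if nonzero, then $y\in W_i$; if zero, the minimal weight of $v$ is strictly larger than $w_i$. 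This yields $\overline{W_i}\subset W_i\cup\bigcup_{j\,:\,w_j>w_i}W_j$, after which any total order refining the decreasing order on the $w_i$ (ties broken arbitrarily) makes each $X_i=W_1\cup\cdots\cup W_i$ closed, and the rest of your argument goes through unchanged.
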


\smallskip

\subsection{$\Q$-filtrable spaces} To begin with, let us introduce a few technical results.

\begin{lem}
Let $X$ be a complex projective algebraic variety with a $\C^*$-action.
Suppose $X$ can be decomposed as the disjoint union
$$X=Y\sqcup C,$$
where $Y$ is a closed stable subvariety and $C$ is an open rational cell
containing a fixed point of $X$, say $c_0$, as its unique attractive fixed point.
Denote by $n$ the (complex) dimension of $C$. 
Then
$$H^{k}(X,Y)=
\left\{
\begin{array}{rcl}
0                       & {\rm if} &k\neq 2n \\
\Q                      & {\rm if} &k=2n. \\
\end{array}
\right.
$$
Furthermore, if $Y$ has vanishing odd cohomology, then 
$$H^k(X,\Q)=
\left\{
\begin{array}{rcl}
H^k(Y,\Q)                        & {\rm if} &k\neq 2n \\
H^{2n}(Y,\Q)\oplus \Q           & {\rm if} &k=2n. \\
\end{array}
\right.
$$
\end{lem}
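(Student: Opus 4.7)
The plan is to reduce the computation of $H^*(X,Y)$ to the reduced rational cohomology of the one--point compactification of $C$, apply the already-established Proposition 3.11 to evaluate it, and then run the long exact sequence of the pair $(X,Y)$ to obtain the second formula.

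First I would observe that, since $X$ is projective (hence compact Hausdorff) and $Y$ is a closed subspace whose complement is $C$, the natural map $X \to C^+$ which is the identity on $C$ and collapses $Y$ to the point at infinity is continuous and factors through a continuous bijection $X/Y \to C^+$. Both source and target are compact Hausdorff, so this is a homeomorphism. Because complex algebraic varieties are triangulable, $(X,Y)$ is a good pair, and therefore
\[
H^k(X,Y) \;\cong\; \widetilde{H}^k(X/Y) \;\cong\; \widetilde{H}^k(C^+).
\]
By Proposition 3.11, $C^+$ is simply connected and has the rational homotopy type of $\S^{2n}$; in particular $\widetilde{H}^k(C^+;\Q)$ equals $\Q$ in degree $2n$ and vanishes otherwise. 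This proves the first assertion.

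For the second assertion I would feed this into the long exact sequence
\[
\cdots \to H^{k-1}(Y)\to H^k(X,Y)\to H^k(X)\to H^k(Y)\to H^{k+1}(X,Y)\to\cdots
\]
Since $H^{\ast}(X,Y)$ is concentrated in degree $2n$, the sequence immediately yields $H^k(X)\cong H^k(Y)$ for every $k\notin\{2n-1,2n\}$. The delicate segment is
\[
0 \to H^{2n-1}(X) \to H^{2n-1}(Y) \to \Q \to H^{2n}(X) \to H^{2n}(Y) \to 0.
\]
Under the hypothesis that $Y$ has no odd cohomology, $H^{2n-1}(Y)=0$, which forces $H^{2n-1}(X)=0$ and leaves a short exact sequence $0\to\Q\to H^{2n}(X)\to H^{2n}(Y)\to 0$. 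Since all terms are $\Q$-vector spaces this splits, giving $H^{2n}(X)\cong H^{2n}(Y)\oplus\Q$.

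The main subtlety is the first step: one must argue that $X/Y$ is homeomorphic to $C^+$ and that $(X,Y)$ is a good pair, both of which rely on the compactness of $X$ and the triangulability of complex algebraic varieties. Once these mild point-set facts are in place, the entire lemma falls out of Proposition 3.11 together with a routine traversal of the long exact sequence.
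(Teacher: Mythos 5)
Your argument is correct, and the second half (the traversal of the long exact sequence of $(X,Y)$) is exactly what the paper does. Where you diverge is in the computation of $H^*(X,Y)$: the paper passes to compactly supported cohomology, invoking Corollary B.14 of Peters--Steenbrink to get $H^*(X,Y)\simeq H^*_c(X-Y)=H^*_c(C)$, and then evaluates $H^*_c(C)=H^*(C,C-\{c_0\})$ using the fact that $C$ is a cone over a rational cohomology $(2n-1)$-sphere (Corollary 3.10). You instead identify $X/Y$ with the one-point compactification $C^+$ by a point-set argument (continuous bijection of compact Hausdorff spaces), use triangulability to see $(X,Y)$ is a good pair, and then quote Proposition 3.11 to evaluate $\widetilde{H}^*(C^+)$. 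The two routes compute the same object --- $H^*_c(C)\simeq\widetilde{H}^*(C^+)$ --- but yours is more self-contained within the paper (it leans on Proposition 3.11 rather than an external reference for the excision-type isomorphism), at the cost of having to verify the point-set facts about $X/Y\cong C^+$ and the good-pair property explicitly, which the sheaf-theoretic statement in Peters--Steenbrink packages away. Both are valid; your version makes the dependence on the earlier structural results about rational cells more visible.
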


\begin{proof}
Let $H^*_{c}(-)$ denote cohomology with compact supports.
It is well-known that $H^*(X)=H^*_{c}(X)$ and $H^*(Y)=H^*_{c}(Y)$,
because $X$ and $Y$ are complex projective varieties.
Moreover, by Corollary B.14 of \cite{ps:mixed}, 
one has $$H^*(X,Y)\simeq H^*_{c}(X-Y)=H^*_{c}(C).$$
Given that $C$ is a rational cell, and a cone over a rational cohomology sphere of dimension $2n-1$ (Corollary 3.10),
it follows easily that 
$$H^*_{c}(C)=H^*(C,C-\{c_0\})=\left\{
\begin{array}{rcl}
0                       & {\rm if} &k\neq 2n \\
\Q                      & {\rm if} &k=2n. \\
\end{array}
\right.
$$
So the first claim is proved.  

As for the second assertion, 
consider the long exact sequence of the pair $(X,Y)$, namely, 
$$
\ldots \rightarrow H^{*-1}(Y) \rightarrow H^{*}(X,Y)\rightarrow H^{*}(X) \rightarrow H^{*}(Y) \rightarrow H^{*+1}(X,Y) \rightarrow \ldots.
$$ 
By our previous remarks, this long exact sequence can be rewritten as
$$
\ldots \rightarrow H^{*-1}(Y) \rightarrow H^{*}_{c}(C)\rightarrow H^{*}(X) \rightarrow H^{*}(Y) \rightarrow H^{*+1}_{c}(C) \rightarrow \ldots.
$$ 
If $Y$ has no cohomology in odd degrees, then the long exact sequence splits, yielding 
the identifications $H^i(X)=H^i(Y)$, when $i\neq 2n$, and
$$H^{2n}(X)=H^{2n}(Y)\oplus H^{2n}_c(C)=H^{2n}(Y)\oplus \Q.$$
The proof is now complete.
\end{proof}

\begin{cor} Keeping the notation of Lemma 4.4, 
attaching a complex $n$-dimensional rational cell 
produces no changes in cohomology, except in degrees $2n-1$ and\, $2n$. 
Furthermore, if $Y$ has no cohomology in odd degrees, then $X$ has no odd cohomology either,
and there is a short exact sequence of the form
$$
0\longrightarrow H^{2n}_c(C)\longrightarrow H^{2n}(X)\longrightarrow H^{2n}(Y) \to 0.
$$
\end{cor}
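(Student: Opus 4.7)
The plan is to deduce both assertions mechanically from Lemma 4.4 by reading off the long exact sequence of the pair $(X,Y)$. Lemma 4.4 has already established that the relative cohomology $H^k(X,Y) \cong H^k_c(C)$ vanishes except when $k=2n$, where it equals $\mathbb{Q}$. With that input, the remaining work is pure bookkeeping on
$$\cdots \to H^{k-1}(Y) \to H^k(X,Y) \to H^k(X) \to H^k(Y) \to H^{k+1}(X,Y) \to \cdots.$$

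For the first claim, I would observe that whenever $k \notin \{2n-1,\, 2n\}$, both $H^k(X,Y)$ and $H^{k+1}(X,Y)$ vanish, so exactness squeezes the map $H^k(X) \to H^k(Y)$ into an isomorphism. Therefore attaching $C$ can only alter cohomology in degrees $2n-1$ and $2n$. For the second claim, assume $H^{\mathrm{odd}}(Y)=0$. Combined with the isomorphisms just obtained, this immediately kills $H^k(X)$ for every odd $k \neq 2n-1$. The remaining odd degree is handled by the fragment
$$0 = H^{2n-1}(X,Y) \to H^{2n-1}(X) \to H^{2n-1}(Y) = 0,$$
where the left vanishes by Lemma 4.4 (since $2n-1 \neq 2n$) and the right vanishes by hypothesis; hence $H^{2n-1}(X)=0$, so $X$ has no odd cohomology.

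To extract the short exact sequence in degree $2n$, I would examine
$$H^{2n-1}(Y) \to H^{2n}(X,Y) \to H^{2n}(X) \to H^{2n}(Y) \to H^{2n+1}(X,Y).$$
The leftmost term vanishes because $Y$ has no odd cohomology, and the rightmost vanishes because $H^{\ast}(X,Y)$ is concentrated in degree $2n$. Substituting $H^{2n}(X,Y) \cong H^{2n}_c(C)$ from Lemma 4.4 yields the desired short exact sequence. Since every step is forced by Lemma 4.4 and exactness, there is no genuine obstacle; the only point requiring care is the parity bookkeeping at $k=2n-1$, which crucially uses \emph{both} the vanishing of $H^{2n-1}(X,Y)$ (from the relative computation) and of $H^{2n-1}(Y)$ (from the hypothesis on $Y$).
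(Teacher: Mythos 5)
Your proposal is correct and follows essentially the same route as the paper: both arguments consist of reading off the long exact sequence of the pair $(X,Y)$ using the computation $H^k(X,Y)\simeq H^k_c(C)$ from Lemma 4.4, concentrated in degree $2n$. Your treatment of the degree $2n-1$ case and the extraction of the short exact sequence in degree $2n$ match the paper's argument, with slightly more explicit bookkeeping.
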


\begin{proof}
We simply observe that the long exact sequence of
the pair $(X,Y)$ gives
$$H^{k}(X)\simeq H^k(Y)$$
for $k\neq 2n-1\,,\, 2n$. Besides, we also obtain the exact sequence
$$
0 \rightarrow H^{2n-1}(X)\rightarrow H^{2n-1}(Y) \rightarrow H^{2n}_c(C)=\Q\rightarrow H^{2n}(X) \rightarrow H^{2n}(Y)\rightarrow 0.
$$ 
So in general $H^{2n-1}(X)$ injects into $H^{2n-1}(Y)$. 
In case we assume $Y$ to have vanishing odd cohomology,
we obtain
$X$ with vanishing odd cohomology as well, and a ``lifting of generators'' sequence:
$$
0\longrightarrow H^{2n}_c(C)\longrightarrow H^{2n}(X)\longrightarrow H^{2n}(Y) \to 0.
$$
\end{proof}

\begin{cor}\label{qfilthom.cor}
Let $X$ be a {\it normal} complex projective variety endowed with a $\C^*$-action and a finite number of fixed points. 
Suppose that each BB-cell is a rational cell. 
%
Then $X$ has vanishing odd cohomology over the rationals,
and the dimension of its cohomology group in degree $2k$ equals the number of rational cells of complex dimension $k$.
Furthermore, $X$ is equivariantly formal and $\chi(X)=|X^T|$.
\end{cor}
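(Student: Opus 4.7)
The plan is to chain together the two main ingredients already developed in the preceding sections: the filtrability of the Bialynicki--Birula decomposition (Theorem 4.3) and the cohomological effect of attaching a single rational cell (Lemma 4.4 and Corollary 4.5). Since $X$ is a normal projective $\C^*$-variety with finitely many fixed points, Theorem 4.3 supplies a filtration
\[
\emptyset = X_0 \subset X_1 \subset \cdots \subset X_m = X
\]
by closed $\C^*$-stable subvarieties such that each stratum $C_i := X_i \setminus X_{i-1}$ is a BB-cell; by hypothesis, every $C_i$ is a rational cell, containing a unique attractive fixed point $x_i$.

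I would then prove by induction on $i$ that, for each $i \in \{0,1,\ldots,m\}$, the variety $X_i$ has vanishing odd rational cohomology and that $\dim_\Q H^{2k}(X_i)$ equals the number of cells $C_j \subseteq X_i$ of complex dimension $k$. The base case $i=0$ is vacuous. For the inductive step, observe that the decomposition $X_i = X_{i-1} \sqcup C_i$ satisfies exactly the hypotheses of Lemma 4.4: $X_{i-1}$ is closed and $\C^*$-stable in the projective variety $X_i$, and $C_i$ is an open rational cell. Hence Corollary 4.5 transports the vanishing of odd cohomology from $X_{i-1}$ to $X_i$, and the lifting-of-generators short exact sequence
\[
0 \longrightarrow H^{2n_i}_c(C_i) \longrightarrow H^{2n_i}(X_i) \longrightarrow H^{2n_i}(X_{i-1}) \longrightarrow 0,
\]
with $n_i := \dim_\C C_i$, adds exactly one to the Betti number in degree $2n_i$ and leaves all other Betti numbers unchanged. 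The inductive hypothesis then yields the claimed Betti-number count for $X_i$.

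Specializing to $i = m$ produces the first two assertions of the corollary. Equivariant formality follows immediately from Theorem 2.2: since $X^T$ is finite and the odd Betti numbers of $X$ vanish, condition (d) of that theorem holds, and by the stated equivalence (when $X^T$ is finite) so does condition (a). Finally, with all odd Betti numbers zero, the Euler characteristic reduces to $\chi(X) = \sum_k \dim_\Q H^{2k}(X)$, which by the induction is the total number of rational cells; this number equals $|X^T|$ because the BB-cells are in bijection with the fixed points through their attractive centers.

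The main obstacle, such as it is, lies in verifying that the filtration pieces really do meet the hypotheses of Lemma 4.4, namely that each successive $X_i$ be a $\C^*$-stable projective variety written as a closed stable part plus an open rational cell with its unique attractive fixed point. This compatibility is precisely what the filtrability theorem and the hypothesis on the cells guarantee, so once the filtration is in place the argument is a clean induction with no additional technical friction.
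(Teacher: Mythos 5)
Your proof is correct and follows essentially the same route as the paper: the paper's own proof is a one-sentence version of exactly this induction, moving up the filtration from Theorem 4.3 and applying Lemma 4.4 and Corollary 4.5 to attach one rational cell at a time. Your write-up merely makes explicit the bookkeeping (Betti-number count, the appeal to Theorem 2.2 for equivariant formality, and the cell--fixed-point bijection for $\chi(X)=|X^T|$) that the paper leaves implicit.
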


\begin{proof}
Since the BB-decomposition on $X$ is filtrable, the result follows from the previous lemma
as we move up in the filtration by attaching one rational cell at the time. This process is systematic and preserves
cohomology in lower and higher degrees at each step.
\end{proof}

Let $T$ be an algebraic torus acting on a variety $X$. 
A one-parameter subgroup $\lambda:\C^*\to T$ is called {\em generic}
if $X^{\C^*}=X^T$, where $\C^*$ acts on $X$ via $\lambda$.
Generic one-parameter subgroups always exist. Note that the $BB$-cells
of $X$, obtained using $\lambda$, are $T$-invariant.

\medskip

Our results in this section suggest the following definition.


\begin{dfn}\label{qfiltrable.def}
Let $X$ be a projective variety equipped with a $T$-action. 
We say that $X$ is {\bf $\Q$-filtrable} if
\begin{enumerate}
 \item X is normal,
 \item the fixed point set $X^T$ is finite, and
 \item there exists a generic one-parameter subgroup $\lambda:\C^*\to T$ for which the associated $BB$-decomposition
of $X$ consists of 
{\em rational cells}. 
\end{enumerate}  
\end{dfn}

\begin{thm}\label{eqforfiltration.thm}
Let $X$ be a normal projective $T$-variety. Suppose that $X$ is $\Q$-filtrable. Then 
\begin{enumerate}[(a)]
\item $X$ admits a filtration into closed subvarieties $X_i$, $i=0, \ldots, m$, such that
$$\emptyset=X_0\subset X_1\subset \ldots \subset X_{m-1}\subset X_m=X.$$
\item each cell $C_i=X_i \setminus X_{i-1}$ is a rational cell, for $i=1,\ldots, m$.
\item For each $i=1,\ldots, m$, the singular rational cohomology of $X_i$ vanishes in odd degrees. 
      In other words, each $X_i$ is equivariantly formal.  
\item If, in addition, the $T$-action on $X$ is $T$-skeletal, then each $X_i$ is a GKM-variety.
\end{enumerate}  
\end{thm}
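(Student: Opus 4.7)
The plan is to combine the filtrability result for normal projective BB-decompositions (Theorem 4.3) with the cohomological behavior of cell attachments established in Corollary \ref{qfilthom.cor} and its preceding lemma, then feed the result into the characterization of equivariant formality (Theorem \ref{ch.formal.thm}).

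First I would fix a generic one-parameter subgroup $\lambda:\C^*\to T$ as provided by Definition \ref{qfiltrable.def}, so that $X^{\C^*}=X^T$ is finite and the associated BB-cells are $T$-invariant rational cells. Since $X$ is normal and projective, Theorem 4.3 (Bialynicki-Birula) yields a filtration by closed subvarieties $\emptyset=X_0\subset X_1\subset\ldots\subset X_m=X$ whose successive differences $X_i\setminus X_{i-1}$ are precisely the BB-cells in some order. Because each $X_i$ is obtained as a union of $T$-stable BB-cells and each BB-cell is $T$-invariant (by genericity of $\lambda$), the subvarieties $X_i$ are automatically $T$-stable closed subvarieties of $X$. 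This gives (a) and (b) directly from the $\Q$-filtrability hypothesis.

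For (c) I would proceed by induction on $i$. The base case $i=1$ is immediate since $X_1=C_1$ is a single rational cell, which is contractible by Theorem \ref{topratcell.thm}(a) and hence has trivial reduced cohomology. For the inductive step, assume $X_{i-1}$ has vanishing odd rational cohomology. The decomposition $X_i=X_{i-1}\sqcup C_i$ fits the hypotheses of Lemma 4.4 with $Y=X_{i-1}$ and $C=C_i$, so Corollary 4.5 (the lifting-of-generators sequence) yields $H^{\mathrm{odd}}(X_i)=0$. Finiteness of $X_i^T$ (being a subset of the finite set $X^T$) together with vanishing odd cohomology gives equivariant formality via Corollary 2.3.

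For (d), suppose $X$ is $T$-skeletal. Each $X_i$ inherits $T$-skeletality: its fixed point set is finite and every one-dimensional $T$-orbit in $X_i$ is a one-dimensional $T$-orbit in $X$, so there are only finitely many. Combined with equivariant formality from (c), this places $X_i$ in the GKM class of Definition \ref{genericaction.def}. The main subtlety in the whole argument is really in (c): one must be sure that the cohomological cell-attachment Lemma 4.4 applies at each stage, which requires that $X_{i-1}$ be closed in $X_i$ and that $C_i$ be an open rational cell of $X_i$ containing a single attractive fixed point — both facts are packaged into the filtrability statement of Theorem 4.3 plus the $\Q$-filtrability hypothesis. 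Everything else is a formal bookkeeping exercise propagating the vanishing of odd cohomology along the filtration.
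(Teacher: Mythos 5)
Your proposal is correct and follows essentially the same route as the paper: (a) and (b) from Definition \ref{qfiltrable.def} together with the Bialynicki-Birula filtrability theorem, (c) by propagating vanishing of odd cohomology up the filtration via the cell-attachment lemma and then invoking the characterization of equivariant formality for varieties with finitely many fixed points, and (d) by observing that each $X_i$ inherits $T$-skeletality. Your explicit induction in (c) merely unpacks the proof of Corollary \ref{qfilthom.cor}, which the paper cites directly.
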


\begin{proof}
Assertions (a) and (b) are a direct consequence of Definition \ref{qfiltrable.def} and Theorem 4.3. 
Applying Corollary \ref{qfilthom.cor} and Theorem \ref{ch.formal.thm} at each step of the filtration 
yields claim (c). For statement (d), we argue as follows.
Notice that all the $X_i$'s have vanishing odd cohomology, as it is guaranteed by (c).
Moreover, since the $X_i$'s are $T$-invariant and the $T$-action on $X$ is $T$-skeletal, 
then 
each $X_i$ contains only a finite number of fixed points and $T$-invariant curves.
In consequence, Theorem 2.5 applied to each $X_i$ gives (d). 
\end{proof}

Thus, we obtain the applicability of GKM-theory at each step of
the filtration, even though the various $X_i$'s are not necessarily
rationally smooth. This approach is more flexible than the general 
approach (by comparing singular cohomology with intersection
cohomology), used, for instance, in Theorem 3.7 of \cite{re:hpolyirr} 
or in \cite{w:formal}. Such flexibility will become apparent
from our results in Section 6, where we supply a method 
for constructing free
module generators on the equivariant cohomology of  
$\Q$-filtrable GKM-varieties (Theorem \ref{eulergenerators.thm}). 
This method is based on the notion of equivariant Euler classes.



\section{Equivariant Euler classes}
We quickly review the theory of equivariant Euler classes.
For a complete treatment of the subject, the reader is invited to consult 
\cite{hs:ctg} and \cite{ar:eu}.

\medskip

Let $X$ be a $T$-variety with an isolated fixed point $x$. 
To fix ideas, let us first assume 
that $(X,x)$ is a rational cell of dimension $n$.
Recall that $\mathbb{S}(X)=[X-\{x\}]/\R^+$ is a rational cohomology sphere $\S^{2n-1}$
and that $X$ is homeomorphic to the (open)
cone over $\mathbb{S}(X)$ (Theorem \ref{topratcell.thm} and Corollary 3.10).
The Borel construction yields the fibration
$$
\mathbb{S}(X)\hookrightarrow \mathbb{S}(X)_T\longrightarrow BT.
$$
Observe that the $E_2$-term of the corresponding Serre spectral sequence 
consists of only two lines, namely,
$$E_2^{p,q}=H^p(BT)\otimes H^q(\mathbb{S}(X))\neq 0 {\rm \;  \; only \; \, when \; \,} q=0 {\rm \; \, and \; \,} q=2n-1.$$
Let $\E_T(x,X)\in H^{2n}(BT)$ be the transgression of the generator $\lambda_X$ of $H^{2n-1}(\mathbb{S}(X))$. 
We call $\E_T(x,X)$ the {\bf Equivariant Euler class of $X$ at $x$}.
It follows from \cite{hs:ctg}, Theorem IV.6, that $\E_T(x,X)$ splits into a product
of linear polynomials, namely
$$
\E_T(x,X)=\omega_1^{k_1}\cdots \omega_s^{k_s},
$$
where $\omega_i\in H^2(BT)\simeq \Xi(T)\otimes \Q$. Here $\Xi(T)$ stands for the character group of $T$, and the isomorphism
is provided in Example 1.1. 

\smallskip

Since $X$ is a cone over $\mathbb{S}(X)$, then
$H^*_{c}(X)\simeq H^*(X,X-\{x\})\simeq \Q$, where $H^*_c(-)$ denotes cohomology with compact supports.
Using the Serre spectral sequence, one notices that
these isomorphisms are also valid in equivariant cohomology:
$$
H^*_{T,c}(X)\simeq H^*_T(X,X-\{x\})\simeq H^*_T.
$$

Let $\mathcal{T}_X$ be the 
canonical generator of $H^*_T(X,X-\{x\})$.
This generator can be described by the 
commutative diagram
$$
\xymatrix{
H^*_T(X,X-\{x\})   \ar[rrr]^{i^*}  \ar@<.5cm>[d]^{\int_{[X]}} & &  & H^*_T(X) \ar[d]^{res} \\
H^*_T(x) \ar@<.5cm>[u]^{\Phi^*_{X}} \ar@{-->}[rrr]^{\times (\E_T(x,X))} & & & H^*_T(x), \\
}
$$
where $\Phi^*_X$ is multiplication by $\mathcal{T}_X$. In other words,
$\mathcal{T}_X$ is the unique class in $H^*_T(X,X-\{x\})$
whose restriction to $H^*_T(pt)$ coincides with $\E_T(x,X)$. 
It is customary in the literature to call $\mathcal{T}_X$ the {\bf Thom class of $X$}.
Let us bear in mind that the map $\Phi^*_X$ raises degree by $2n$. 
Clearly,
$H^*_{T}(X,X-\{x\})\simeq H^*_c(X)\otimes H^*_T(pt)$ and so, $H^j_{T,c}(X)=0$ for $j<2n$.
As for the integral appearing here, it is, by definition, the inverse of $\Phi^*_X$.

\smallskip

Let $\mathcal{Q}_T$ be the quotient field of $H^*_T$. 
If $\mu\in H^*_{T,c}(X)$, then
$$
\E_T(x,X)\wedge \int_{[X]}\mu=\mu_{x},
$$
where $\mu_{x}$ denotes restriction of the class $\mu$ to $x$.
Hence, the identity
$$
\frac{1}{\E_T(x,X)}=\frac{1}{\mu_{x}}\int_{[X]}\mu,
$$
holds in $\mathcal{Q}_T$, for every non-zero $\mu$ in $H^*_T(X,X-\{x\})$.

\medskip

More generally,
let $X$ be a $T$-variety with an isolated fixed point $x$.
Suppose that $X$ is rationally smooth at $x$.
As pointed out in \cite{ar:eu}, 
we can replace $X$ by a conical neighborhood $U_x$ of $x$ 
and define
$\E_T(x,X):=\E_T(x,U_x).$
For instance,
if $x$ is also an attractive fixed point, we can let 
$U_x$ be a rational cell (Proposition \ref{tang.cell}).

In case the isolated fixed point $x\in X$ is not necessarily a rationally smooth point,
Arabia (\cite{ar:eu})
has shown that we can still define an Euler class $\E_T(x,X)$.
The key ingredient here is that, by Theorem 1.2,
the map
$$i^*:H^*_T(X,X-\{x\})\to H^*_T(x)$$
is an isomorphism modulo $H^*_T$-torsion. Therefore,
the function that assigns to a non-torsion element 
$\mu\in H^*_T(X,X-\{x\})$ the fraction $\frac{1}{\mu_x}\int_X \mu \in \mathcal{Q}_T$
is {\em constant}. 

\begin{dfn}
Let $X$ be a $T$-variety. Suppose that $x\in X^T$ is an isolated fixed point. 
The fraction $$\frac{1}{\E_T(x,X)}:=\frac{1}{\mu_x}\int_{X}\mu \in \mathcal{Q}_T,$$
where $\mu$ is any non-torsion element of $H^*_T(X,X-\{x\})$,
is called the {\em inverse of the equivariant Euler class of $X$ at $x$}.
When this fraction is non-zero, we denote its inverse by $\E_T(x,X)$ and call it
the {\em Equivariant Euler class of $X$ at $x$.} 
\end{dfn}

From Remark 2.3-2(b) of \cite{ar:eu}, it follows that if 
$x$ is a rationally smooth point of $X$, then $\E_T(x,X)$ is a non-zero polynomial, 
and splits into a product of linear factors.

\begin{ex}
When $X=\C^n$, $x=0$, and the algebraic torus $T$ acts linearly on $\C^n$, one proves
$$\E_{T}(0,\C^n)=(-1)^n\prod_{\alpha \in \mathcal{A}} \alpha,$$
where $\mathcal{A}$ is the collection of weights. Furthermore, if the weights in $\mathcal{A}$
are pairwise linearly independent, then the associated complex projective space $\mathbb{P}(\C^n_\mathcal{A})$
has exactly $n$ $T$-fixed points: the lines $\C_{\alpha_i}$. One also verifies that 
$$\E_{T}([\C_{\alpha_i}], \mathbb{P}(\C^n_\mathcal{A}))=\prod_{j\neq i}(\alpha_i-\alpha_j).$$ 
See \cite{ar:eu}, Remark 2.4.1-1.
\end{ex}

\begin{rem}
The inverse of the equivariant Euler class 
coincides with the equivariant multiplicity at a nondegenerate fixed point (\cite{bri:eqchow}, Section 4).  
\end{rem}

\begin{prop}[Atiyah-Bott Localization formula, \cite{ar:eu}]
Let $X$ be a complex projective variety. Suppose that a torus $T$
acts on $X$ with only a finite number of fixed points. Then
$$
\int_X \mu = \sum_{x\in X^T} \frac{\mu|_{x}}{\E_{T}(x,X)},
$$
for any $\mu \in H^*_T(X)$.
\hfill $\square$  
\end{prop}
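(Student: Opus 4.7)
The plan is to derive the formula in the standard way from the Localization Theorem (Theorem 1.2) together with the defining property of the inverse equivariant Euler class. First I note that $H^*_T(pt) = \mathbb{Q}[x_1,\ldots,x_r]$ is an integral domain, so the pushforward $\int_X : H^*_T(X) \to H^*_T(pt)$ annihilates $H^*_T$-torsion and extends $\mathcal{Q}_T$-linearly to $H^*_T(X) \otimes_{H^*_T} \mathcal{Q}_T$. By Theorem 1.2, the restriction map
$$H^*_T(X) \otimes_{H^*_T} \mathcal{Q}_T \longrightarrow H^*_T(X^T) \otimes_{H^*_T} \mathcal{Q}_T = \bigoplus_{x \in X^T} \mathcal{Q}_T$$
is an isomorphism, so any class in $H^*_T(X)$ is determined modulo torsion by the tuple of its fixed-point restrictions. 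It thus suffices to write $\mu = \sum_{x \in X^T} \mu_{(x)}$ in $H^*_T(X) \otimes \mathcal{Q}_T$, with each $\mu_{(x)}$ restricting to $\mu|_{x}$ at $x$ and to zero at every other fixed point.

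To construct these local classes, for each $x \in X^T$ I would use the natural map $j_x : H^*_T(X, X \setminus \{x\}) \to H^*_T(X)$ coming from the long exact sequence of the pair. For any $\nu$ in the image of $j_x$ and any $y \in X^T$ with $y \neq x$, the restriction $\nu|_{y}$ vanishes, since the corresponding composite factors through $H^*_T(X \setminus \{x\})$. Moreover, by the very definition of the inverse Euler class,
$$\int_X j_x(\nu) = \frac{\nu|_{x}}{\E_T(x,X)}$$
for every non-torsion $\nu$. Since $\E_T(x,X) \neq 0$ by assumption, restriction-to-$x$ from $H^*_T(X, X \setminus \{x\}) \otimes \mathcal{Q}_T$ onto $\mathcal{Q}_T$ is surjective, and I can therefore choose $\mu_{(x)} \in H^*_T(X) \otimes \mathcal{Q}_T$ in the image of $j_x$ with $\mu_{(x)}|_{x} = \mu|_{x}$. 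This forces $\int_X \mu_{(x)} = \mu|_{x}/\E_T(x,X)$.

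With these pieces in hand, the assembly is immediate: the difference $\mu - \sum_x \mu_{(x)}$ restricts to zero at every fixed point, hence vanishes in $H^*_T(X) \otimes \mathcal{Q}_T$ by the Localization Theorem. Applying $\int_X$ and using $\mathcal{Q}_T$-linearity gives
$$\int_X \mu = \sum_{x \in X^T} \int_X \mu_{(x)} = \sum_{x \in X^T} \frac{\mu|_{x}}{\E_T(x,X)}.$$
The main technical obstacle is precisely the realizability of arbitrary values at $x$ by classes in the image of $j_x$ after localization; this reduces to the non-vanishing of $\E_T(x,X)$, which is our standing hypothesis (and which holds automatically at rationally smooth fixed points, such as attractive fixed points of rational cells by Theorem \ref{topratcell.thm}, where the Euler class is a nonzero product of linear forms).
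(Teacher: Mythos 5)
Your argument is essentially correct, but note that the paper itself offers no proof of this proposition: it is quoted from Arabia \cite{ar:eu} and closed with a $\square$, so there is nothing internal to compare against. What you have written is the standard derivation, and it is the natural one given how the paper sets things up: localize via Theorem 1.2, decompose $\mu$ (modulo torsion) into local contributions supported at the fixed points via the maps $H^*_T(X,X\setminus\{x\})\to H^*_T(X)$, observe that such a contribution restricts to zero at every other fixed point by exactness of the pair sequence, and evaluate each local integral by the defining property of $1/\E_T(x,X)$. Three small points deserve care. First, the existence of the pushforward $\int_X$ for a possibly singular projective $X$ is itself part of Arabia's framework (via equivariant Borel--Moore homology and the fundamental class); you use it freely, as does the paper. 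Second, your surjectivity of restriction-to-$x$ after localization does not really come from the nonvanishing of $\E_T(x,X)$: it comes from the localization theorem applied to the pair, which (as the paper notes in Section 5) makes $i^*:H^*_T(X,X\setminus\{x\})\to H^*_T(x)$ an isomorphism modulo $H^*_T$-torsion; the nonvanishing of $\E_T(x,X)$ is instead what is needed for the displayed fraction $\mu|_x/\E_T(x,X)$ to be meaningful at all, since Definition 5.1 only guarantees the \emph{inverse} Euler class in general. Third, you implicitly identify $\int_X$ of a relative class with $\int_X$ of its image in absolute cohomology; this is standard compatibility of Gysin maps but is worth stating. None of these is a genuine gap in the intended level of rigor.
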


Let $(X,x)$ be a rational cell. Then, by Proposition \ref{tang.cell}, 
$X$ admits a closed $T$-equivariant embedding into its tangent space $T_xX$. 
Notice that there are only a finite number of codimension-one  
subtori $T_1, \ldots, T_m$ of $T$ for which $X^{T_j}\neq X^T$, since 
each one of them is contained in the kernel of a weight of $T$ in $T_xX$.


\begin{thm}[\cite{ar:eu}, \cite{bri:ech}]
Let $(X,x)$ be a rational cell of dimension $n$. Let $\pi:X\to \C^n$ be the equivariant finite surjective map from Theorem \ref{arabiamap.thm}.
Then
\begin{enumerate}[(a)]
 \item The induced morphism in cohomology
$$\pi^*:H_c^{2n}(\C^n)\longrightarrow H_c^{2n}(X)$$
is an isomorphism and satisfies $\int_Y \pi^*(\mu)=deg(\pi) \int_{\C^n}\mu$,
where $deg(\pi)$ is the cardinality of a general fibre of $\pi$.
This formula also holds in equivariant cohomology, in particular
$$\E_{T}(0,\C^n)=deg(\pi)\cdot \E_{T}(x_0,X).$$

 \item $\E_T(X,x)=c\displaystyle \prod_{T_i}\E_{T}(X^{T_i},x)$, where $c$ is a positive rational number, and 
       the product runs over the finite number of codimension-one subtori $T_i$ of $T$
       for which $X^{T_i}\neq X^T$.
\end{enumerate} 
\end{thm}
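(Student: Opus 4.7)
The plan is to establish part (a) using standard functoriality of compactly supported cohomology under finite surjective maps, and then to deduce part (b) by applying (a) both to $X$ and to each of its fixed-point subvarieties $X^{T_i}$, combined with a weight-decomposition of the Euler class on the ambient representation $\C^n$.

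First I would prove part (a). Since $(X,x)$ is a rational cell of complex dimension $n$, the computation recalled at the beginning of Section 5 yields $H^{2n}_c(X)\simeq \Q$ concentrated in top degree, and the same holds for $\C^n$. The finite surjective map $\pi\colon X\to \C^n$ admits a proper pushforward $\pi_*$ on compactly supported cohomology satisfying $\pi_*\circ \pi^*=\deg(\pi)\cdot\mathrm{id}$. In top degree this forces $\pi^*\colon H^{2n}_c(\C^n)\to H^{2n}_c(X)$ to be an isomorphism of one-dimensional $\Q$-vector spaces, and the projection formula yields $\int_X\pi^*(\mu)=\deg(\pi)\int_{\C^n}\mu$. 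The same reasoning carries over to equivariant compactly supported cohomology via the Borel fibration, since both $H^*_{T,c}(X)$ and $H^*_{T,c}(\C^n)$ are free of rank one over $H^*_T$, concentrated in degree $2n$. For the Euler class identity, pick a non-torsion class $\mu\in H^*_{T,c}(\C^n)$. Since $\pi(x)=0$, naturality of restriction gives $(\pi^*\mu)|_x=\mu|_0$, which by the definition of the equivariant Euler class equals $\E_T(0,\C^n)\int_{\C^n}\mu$; on the other hand, applying the same definition to $\pi^*\mu$ gives $(\pi^*\mu)|_x=\E_T(x,X)\int_X\pi^*\mu=\E_T(x,X)\cdot\deg(\pi)\int_{\C^n}\mu$. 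Comparing these two expressions yields $\E_T(0,\C^n)=\deg(\pi)\cdot\E_T(x,X)$.

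For part (b), the strategy is to exploit the factorization of the Euler class of the linear representation $\C^n$. If $\alpha_1,\ldots,\alpha_n$ are the weights of $T$ on $\C^n$, then $\E_T(0,\C^n)=(-1)^n\prod_j\alpha_j$ by the example calculation in Section 5. Each weight $\alpha_j$ cuts out a codimension-one subtorus $\ker\alpha_j$, and grouping the weights according to the subtorus they define yields $\E_T(0,\C^n)=\prod_{T_i}\E_T(0,(\C^n)^{T_i})$, where $(\C^n)^{T_i}$ is the sum of those weight lines on which $T_i$ acts trivially; the overall signs match because the dimensions of the $(\C^n)^{T_i}$ sum to $n$. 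For each $T_i$, the fixed-point subvariety $X^{T_i}$ is a closed $T$-stable subvariety on which $x$ remains an attractive fixed point (choose the defining one-parameter subgroup inside $T_i$), and $\pi$ restricts to an equivariant finite surjective map $X^{T_i}\to(\C^n)^{T_i}$. Moreover $X^{T_i}$ is again a rational cell, so part (a) applies and gives $\E_T(0,(\C^n)^{T_i})=\deg(\pi|_{X^{T_i}})\cdot\E_T(x,X^{T_i})$. Substituting into $\E_T(0,\C^n)=\deg(\pi)\E_T(x,X)$ produces $\E_T(x,X)=c\prod_{T_i}\E_T(x,X^{T_i})$ with $c=\prod_i\deg(\pi|_{X^{T_i}})/\deg(\pi)\in \Q_{>0}$.

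The main obstacle I anticipate is justifying that $X^{T_i}$ is itself a rational cell. Attractiveness of $x$ in $X^{T_i}$ is routine by placing the defining one-parameter subgroup inside $T_i$, but rational smoothness of $X^{T_i}$ at $x$ requires additional input: the fact that the fixed-point subvariety under a subtorus of a rationally smooth $T$-variety is again rationally smooth (a Smith-theoretic statement, cf.~\cite{bri:rat}). One also needs to verify that $\pi|_{X^{T_i}}$ surjects onto $(\C^n)^{T_i}$ with the correct generic fibre cardinality; this should follow from equivariance, finiteness of $\pi$, and a dimension count comparing the $T$-weights of $T_xX^{T_i}$ to those of $(\C^n)^{T_i}$.
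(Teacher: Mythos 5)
Your proposal is essentially correct, but it is doing much more work than the paper, which simply defers: part (a) is attributed wholesale to Arabia (\cite{ar:eu}, Proposition 3.2.1-1), and part (b) is obtained by combining Remark 5.3 (the inverse Euler class equals the equivariant multiplicity at a nondegenerate fixed point) with Brion's multiplicativity theorem for equivariant multiplicities (\cite{bri:ech}, Theorem 18 (iii)). Your reconstruction of (a) via the transfer $\pi_*\pi^*=\deg(\pi)\cdot\mathrm{id}$ on one-dimensional top compactly-supported cohomology, plus naturality of restriction at $x$ versus $0$, is the right mechanism and matches Arabia's argument in spirit. For (b) you replace the citation by a direct derivation: factor $\E_T(0,\C^n)=(-1)^n\prod_j\alpha_j$ by grouping weights according to the codimension-one subtorus they kill, and apply (a) to each restriction $\pi|_{X^{T_i}}\colon X^{T_i}\to(\C^n)^{T_i}$. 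This buys a self-contained proof at the cost of two verifications you correctly flag but should nail down. First, the identification of the index sets and the surjectivity of $\pi|_{X^{T_i}}$ both follow cleanly from the observation that $\pi^{-1}\bigl((\C^n)^{T_i}\bigr)=X^{T_i}$: a connected group acting on a finite fibre acts trivially; this also gives $X^{T_i}\neq X^T$ iff $(\C^n)^{T_i}\neq\{0\}$, so your product and the theorem's product range over the same subtori. Second, for (a) to apply to $X^{T_i}$ you need it to be a \emph{rational cell} in the sense of Definition 3.4, which requires not only rational smoothness at $x$ (Brion's Theorem 1.1 of \cite{bri:rat}, exactly the input the paper itself uses in the proof of Corollary 5.6) and attractiveness, but also irreducibility; this last point you do not address, though it follows because every irreducible component of $X^{T_i}$ is $\lambda$-stable and hence contains the attractive point $x$, and rational smoothness at $x$ excludes several top-dimensional components or components of smaller dimension. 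With these two points supplied, your argument is complete and recovers the positive rational constant $c=\prod_i\deg(\pi|_{X^{T_i}})/\deg(\pi)$ explicitly, which the paper's citation-based proof does not.
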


\begin{proof}
We refer the reader to \cite{ar:eu}, Proposition 3.2.1-1, for the proof of part (a).
Finally, part (b) follows from Remark 5.3 and \cite{bri:ech}, Theorem 18 (iii).
\end{proof}

Let $(X,x)$ be a rational cell. At the beginning of this section
it was shown  
that  
$\E_T(x,X)$ splits into a product of characters.
The following result provides a geometric interpretation of this factorization.

\begin{cor}\label{characters.cell.cor}
Let $(X,x)$ be a rational cell of dimension $n$. 
Suppose that $X$ contains only a finite number of closed irreducible $T$-invariant curves 
$\mathcal{C}_i$, $i=1,\ldots,n$. 
Let $\chi_i$ be the character associated with the action of $T$ on $\mathcal{C}_i$.
Then 
$$\E_T(x,X)=c\cdot \chi_1 \cdots \chi_n,$$
where $c$ is a positive rational number.  
\end{cor}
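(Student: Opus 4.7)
The plan is to deduce the result from the factorization of the equivariant Euler class provided by Theorem 5.5(b), and then to organize the resulting factors by grouping the invariant curves according to their stabilizers. First I would apply Theorem 5.5(b) to write
\[
\E_T(x,X) \;=\; c\,\prod_{S}\E_T(X^S,x),
\]
where the product ranges over the finite set of codimension-one subtori $S\subset T$ with $X^S\neq\{x\}$ and $c\in\Q_{>0}$. Under the finiteness hypothesis on invariant curves, each such $S$ is the (identity component of the) stabilizer of at least one $\mathcal{C}_j$: the effective $T/S\simeq\C^*$-action on $X^S$, whose only fixed point is $x$, must possess a one-dimensional orbit, whose closure is then one of the $\mathcal{C}_j$ with $T_j=S$.

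Grouping the curves by stabilizer gives a partition $\{1,\dots,n\}=\bigsqcup_S J_S$ with $J_S=\{j:T_j=S\}$ and $n_S:=|J_S|$, so that $\sum_S n_S=n$. The central technical step is to establish
\[
\E_T(X^S,x) \;=\; c_S\,\omega_S^{\,n_S}, \qquad c_S\in\Q_{>0},
\]
where $\omega_S$ is a primitive character of $T/S$ regarded as a character of $T$. That $\E_T(X^S,x)$ is a rational multiple of a power of $\omega_S$ follows from the triviality of the $S$-action on $X^S$: every character of $T$ arising in the construction of the Euler class must factor through $T/S$, hence is an integer multiple of $\omega_S$. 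The exponent $n_S$ is then pinned down by the degree identity $\deg\E_T(X^S,x)=2\dim X^S$ together with an application of Theorem \ref{cell.curves.thm} to $X^S$, which identifies $\dim X^S$ with the number of $T$-invariant curves contained in $X^S$, namely $n_S$.

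To conclude, each $\chi_j$ with $j\in J_S$ is a nonzero integer multiple of $\omega_S$, and by Proposition \ref{tang.cell} (the weights of $T$ on $T_xX$ lie in an open half-space) we may orient $\omega_S$ so that $\chi_j=m_j\omega_S$ with $m_j\in\Z_{>0}$. Then $\prod_{j\in J_S}\chi_j=\bigl(\prod_{j\in J_S} m_j\bigr)\omega_S^{n_S}$ is a positive rational multiple of $\omega_S^{n_S}$, and assembling all factors yields
\[
\E_T(x,X) \;=\; c\prod_{S}c_S\,\omega_S^{\,n_S} \;=\; c'\prod_{j=1}^n\chi_j,\qquad c'\in\Q_{>0}.
\]
The main obstacle will be justifying the degree/dimension identity $\dim X^S=n_S$ and the polynomial form of $\E_T(X^S,x)$ when $X^S$ is reducible; in that case one decomposes $X^S$ into its irreducible components (each a lower-dimensional rational cell, as follows from the rational smoothness of fixed loci in rationally smooth varieties) and then patches the local Euler classes, either through a Mayer-Vietoris argument in relative equivariant cohomology or by an inductive application of Theorem 5.5(a) component by component.
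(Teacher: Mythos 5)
Your overall strategy coincides with the paper's: both proofs start from the factorization $\E_T(x,X)=c\prod_{S}\E_T(X^S,x)$ of Theorem 5.5(b) over the codimension-one subtori $S$ with $X^S\neq X^T$, and both identify the curves with these subtori. The divergence, and the gap, is in what you call the ``central technical step.'' You allow $\dim X^S=n_S$ to be arbitrary and try to prove $\E_T(X^S,x)=c_S\,\omega_S^{\,n_S}$, which forces you to confront a possibly reducible or higher-dimensional $X^S$; you explicitly leave this case open, offering only a sketch via Mayer--Vietoris or a component-by-component use of Theorem 5.5(a). That fallback does not obviously work: for a union of components meeting only at $x$, it is the \emph{inverses} of the local Euler classes that add (by additivity of $H^*_{T,c}$ over the components of $X^S\setminus\{x\}$ together with the defining formula $\tfrac{1}{\E_T}=\tfrac{1}{\mu_x}\int\mu$), so the Euler class of a reducible $X^S$ need not be a monomial, and your appeal to Theorem \ref{cell.curves.thm} requires $X^S$ to be an irreducible rational cell, which is precisely what is in question.

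The missing observation --- and the one the paper uses, via Brion's Corollary 2 of Section 1.4 of \cite{bri:rat} together with his Theorem 1.1 --- is that under the finiteness hypothesis each $X^S$ is automatically a \emph{single one-dimensional} rational cell, equal to one of the $\mathcal{C}_i$. Indeed, if $\dim X^S\geq 2$, the effective $\C^*\simeq T/S$-action on $X^S$ with $x$ as its only fixed point would have a positive-dimensional family of one-dimensional orbits, producing infinitely many $T$-invariant curves in $X$; and $X^S$ is rationally smooth at $x$, hence unibranch there, so it cannot be a union of several curves. Once this is in place, your case distinction collapses to $n_S=1$ for every $S$, each factor $\E_T(X^S,x)$ is a rational multiple of $\chi_i$ by Lemma \ref{1dim.ch}, Example 5.2 and Theorem 5.5(a), and the product formula follows as in the paper. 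Your treatment of the sign (orienting each $\omega_S$ by the half-space condition of Proposition \ref{tang.cell}) is a reasonable way to get positivity of $c$, arguably more explicit than the paper's; but as written the proof is incomplete until the $\dim X^S\leq 1$ step is supplied.
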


\begin{proof}
There is only a finite number of codimension-one subtori $T_i$
such that $X^{T_i}\neq X^T$. 
Notice that  
$T$ acts on each $X^{T_i}$
through its quotient $T/{T_i}\simeq \C^*$. 
Because $x$ is an attractive fixed point of $X$, 
we can assume, without loss of generality, 
that $x$
is an attractive fixed point of each $X^{T_i}$, for the induced action of $\C^*\simeq T/T_i$. 
It follows from Corollary 2 of Sect. 1.4 of \cite{bri:rat} that  
$X^{T_i}=\mathcal{C}_i$. 
Moreover, by Theorem 1.1 of \cite{bri:rat}, each $X^{T_i}$ is rationally smooth at $x$.
Hence, each $X^{T_i}$ is a one-dimensional rational cell with attractive fixed point $x$ (see Lemma \ref{1dim.ch}
for a characterization of these cells). The result can now be deduced from Theorem 5.5 and Example 5.2. 
\end{proof}

\section{Local Indices and Module generators for the equivariant cohomology of $\Q$-filtrable GKM varieties.}

We 
supply 
a method for building
canonical free module
generators on the equivariant cohomology
of any $\Q$-filtrable GKM-variety.
Our findings here extend the work of Guillemin-Kogan (\cite{gk:morse}),
on the equivariant $K$-theory of orbifolds,  
to the equivariant cohomology of a much larger class of singular varieties.

\smallskip

Let $X$ be a $\Q$-filtrable GKM-variety. In other words, $X$
is a normal projective $T$-variety with only a finite number of fixed points
and $T$-invariant curves. 
Moreover, 
there exists a
$BB$-decomposition of $X$
as a disjoint union of rational cells, say $(C_1,x_1), \ldots, (C_m,x_m)$, 
each one 
containing $x_i\in X^T$ as its unique attractive fixed point. 
This decomposition
induces a filtration of $X$,
$$
\emptyset=X_0\subset X_1\subset X_2 \ldots \subset X_m=X, 
$$
by closed invariant subvarieties $X_i$, 
so that
each difference $X_i\setminus X_{i-1}$ equals $C_i$, for $i=1,\ldots,m$.
The key observation here is provided by Theorem \ref{eqforfiltration.thm}. It states that every $X_i$
is equivariantly formal and is made up of rational cells. 
In consequence,
GKM-theory can be applied to each $X_i$. We will refer to $X_i$
as the {\em $i$-th filtered piece of $X$}, and $m$ will be called
the {\em length of the filtration}.

\medskip

Denote by $x_1,\ldots,x_m$ the fixed points of $X$.
The filtration induces a total ordering of the fixed points, namely,
$$x_1<x_2<\ldots<x_m.$$

Let $(C_i,x_i)$ be a rational cell of $X$. From the previous section,
we know that $$H^*_{T,c}(C_i)\simeq H^*_{T}(C_i,C_i-\{x_i\}) \simeq H^*_T(x_i),$$ 
where 
the second isomorphism 
is provided by the Thom class $\mathcal{T}_i$, a well-known element of $H^*_{T}(C_i,C_i-\{x_i\})$. 
When restricted
to $H^*_T(x_i)$, the Thom class $\mathcal{T}_i$ becomes a product of linear polynomials: 
the Euler class $\E_T(c_i,C_i)$.

\medskip

In section 4 we built non-equivariant short exact sequences of the form
$$\xymatrix{0 \ar[r]& H^{2k}_c(C_i) \ar[r]& H^{2k}(X_i) \ar[r]& H^{2k}(X_{i-1})\ar[r] & 0},$$
for every $i$. Since the spaces involved have zero cohomology in odd degrees, then these
short exact sequences naturally generalize to the equivariant case, 
so we also have equivariant short exact sequences
$$\xymatrix{0 \ar[r]& H^{2k}_{T,c}(C_i) \ar[r]& H^{2k}_T(X_i) \ar[r]& H^{2k}_T(X_{i-1})\ar[r] & 0},$$
for each $i$. On the other hand, by equivariant formality, the singular equivariant cohomology of each $X_i$
injects into $H^*_T(X_i^T)=\oplus_{j\leq i}H^*_T(x_j)$. 

In summary, for each $i$,
we have the commutative diagram
$$
\xymatrix{
0 \ar[r]& H^{*}_{T,c}(C_{i+1}) \ar[r]\ar[d]& H^{*}_T(X_{i+1})          \ar[r]\ar[d]      & H^{*}_T(X_i)\ar[r]\ar[d]        & 0 \\
0 \ar[r]& H^{*}_T(x_{i+1})     \ar[r]       & \oplus_{j\leq i+1} H^{*}_T(x_j) \ar[r]& \oplus_{j\leq i} H^*_T(x_j) \ar[r] & 0 \\
}
$$
\label{lift.diagram}
where the vertical maps are all injective. Indeed, 
such 
maps correspond to 
the various restrictions to 
fixed point sets. 
We will use this diagram
to 
build cohomology generators. The next two lemmas
are inspired in Theorem 2.3 and Proposition 4.1 of \cite{hhh:c}, 
where Kac-Moody flag varieties are studied.

\begin{lem}
Let $X$ be a $\Q$-filtrable variety. Then there exists a non-canonical
isomorphism of $H^*_T$-modules
$$
H^*_T(X) \simeq \bigoplus_{x_i\in X^T}\E_T(C_i,x_i)H^*_T(pt),
$$
which is compatible with restriction to the various $i$-th filtered pieces $X_i\subset X$.
\end{lem}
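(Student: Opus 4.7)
The plan is to proceed by induction on $i$, the length of the filtration, using the equivariant short exact sequences
$$
0 \longrightarrow H^{*}_{T,c}(C_{i}) \longrightarrow H^{*}_T(X_{i}) \longrightarrow H^{*}_T(X_{i-1}) \longrightarrow 0
$$
from the commutative diagram displayed before the statement. First I would recall that, by Theorem \ref{eqforfiltration.thm}, every filtered piece $X_i$ is equivariantly formal, hence $H^{*}_T(X_i)$ is a free $H^*_T$-module by Theorem \ref{ch.formal.thm}. In particular each of the above sequences is a short exact sequence of free (and therefore projective) $H^*_T$-modules, so it admits a (non-canonical) $H^*_T$-linear splitting $\sigma_i\colon H^*_T(X_{i-1})\to H^*_T(X_i)$.

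Next I would identify $H^{*}_{T,c}(C_i)$ with $\E_T(x_i,C_i)H^*_T(pt)$, as an $H^*_T$-submodule of $H^*_T(X_i)\hookrightarrow \bigoplus_{j\leq i}H^*_T(x_j)$. By the discussion in Section 5, there is an isomorphism $H^{*}_{T,c}(C_i)\simeq H^*_T(pt)$ realised by the Thom class $\mathcal{T}_i\in H^*_T(C_i,C_i\setminus\{x_i\})$, and the composition
$$
H^*_T(pt)\xrightarrow{\cdot \mathcal{T}_i} H^{*}_{T,c}(C_i)\longrightarrow H^*_T(X_i)\longrightarrow\bigoplus_{j\leq i}H^*_T(x_j)
$$
sends $1$ to the tuple whose $x_i$-component is $\E_T(x_i,C_i)$ and whose $x_j$-component is $0$ for $j<i$ (because $x_j\notin C_i$ for $j<i$, so the image of $\mathcal{T}_i$ under the restriction to $X_{i-1}$ is zero). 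Hence the image of $H^{*}_{T,c}(C_i)$ in $H^*_T(X_i)$ is a free $H^*_T$-submodule isomorphic to $\E_T(x_i,C_i)H^*_T(pt)$.

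For the inductive step, assume that $H^*_T(X_{i-1})\simeq \bigoplus_{j\leq i-1}\E_T(x_j,C_j)H^*_T(pt)$ via an isomorphism compatible with restriction to further subfiltrations. Combining the splitting $\sigma_i$ with the identification of $H^{*}_{T,c}(C_i)$ just obtained yields
$$
H^*_T(X_i)\;\simeq\; H^{*}_{T,c}(C_i)\,\oplus\, H^*_T(X_{i-1})\;\simeq\;\bigoplus_{j\leq i}\E_T(x_j,C_j)H^*_T(pt).
$$
Moreover, the surjection $H^*_T(X_i)\to H^*_T(X_{i-1})$ identifies, under these decompositions, with the obvious projection onto the summands indexed by $j\leq i-1$; this is exactly the compatibility with restriction to the filtered pieces asserted in the statement. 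Taking $i=m$ produces the desired module isomorphism.

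The main point to keep track of is the non-canonicity: different choices of splittings $\sigma_i$ produce different isomorphisms. The step I expect to require the most care is verifying that the class coming from $H^{*}_{T,c}(C_i)$ restricts to $\E_T(x_i,C_i)$ at $x_i$ and to zero at the previous fixed points, since this is what pins down the correct free generator $\E_T(x_i,C_i)$ of each summand; once this is established, the inductive splitting argument is formal, because freeness of all $H^*_T$-modules in sight (guaranteed by equivariant formality of the $X_i$) removes any obstruction to lifting.
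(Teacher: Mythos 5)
Your proposal is correct and follows essentially the same route as the paper: induction on the length of the filtration, splitting the equivariant short exact sequence $0\to H^*_{T,c}(C_i)\to H^*_T(X_i)\to H^*_T(X_{i-1})\to 0$ using freeness of $H^*_T(X_{i-1})$, and identifying $H^*_{T,c}(C_i)$ with $\E_T(x_i,C_i)H^*_T(pt)$ via the Thom class. You merely spell out details (projectivity giving the splitting, the restriction of the Thom class to the fixed points) that the paper leaves implicit.
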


\begin{proof}
We argue by induction on the length of the filtration. The case $m=1$ is simple,
because it corresponds to $X=\{x_1\}$, a singleton.
Assuming that we have proved the assertion for $m$, 
let us 
prove the case $m+1$. 
Substitute $i=m$ in the commutative diagram above. 
Then $$H^*_T(X_{m+1})=H^*_T(X)\simeq H^*_{T,c}(C_{m+1})\oplus H^*_T(X_m).$$
By induction, $H^*_T(X_m)\simeq \prod_{i\leq m}\E_T(C_i,x_i)H^*_T(pt)$.
So the claim for $m+1$ follows directly from the isomorphism 
$H^*_{T,c}(C_{m+1})\simeq \E_T(C_{m+1},x_{m+1})H^*_T(pt).$
\end{proof}

The isomorphism of the previous lemma is not canonical because the
cellular decomposition of $X$ depends on a particular choice 
of generic one-parameter subgroup and a compatible ordering
of the fixed points. 

\medskip

Given a class $\mu \in H^*_T(X)$, denote by $\mu(x_i)$ its restriction to the fixed point $x_i$. 

\begin{lem}
Let $X$ be a projective $T$-variety. 
Assume that $X$ is $\Q$-filtrable and 
let $x_1<x_2<\ldots<x_m$ be the order relation on $X^T$ 
compatible with the filtration of $X$.
For each $i$, let $\varphi_i\in H^*_T(X)$ be a class such that
$$\varphi_i(x_j)=0 \; {\rm \it for} \; j < i,$$
and 
$$\varphi_i(x_i) \; {\rm \it is \, a \, scalar \, multiple \, of} \; \E_T(i,C_i).$$
Then the classes $\{\varphi_i\}$ generate $H^*_T(X)$ freely as a module over $H^*_T(pt)$. 
\end{lem}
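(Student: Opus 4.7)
The plan is to induct on the filtration length $m$, using the short exact sequence
\[
0 \to H^*_{T,c}(C_m) \to H^*_T(X_m) \to H^*_T(X_{m-1}) \to 0
\]
from Section 6 together with the triangular structure of the restriction map $H^*_T(X)\hookrightarrow \bigoplus_i H^*_T(x_i)$. The base case $m=1$, where $X$ is a point, is trivial. For the inductive step, I would first observe that for $i<m$ the restriction $\varphi_i|_{X_{m-1}}$ still satisfies the hypotheses (the fixed points of $X_{m-1}$ are precisely $x_1,\ldots,x_{m-1}$, ordered compatibly with the filtration inherited from Theorem \ref{eqforfiltration.thm}), so by induction the $\varphi_i|_{X_{m-1}}$, $i<m$, freely generate $H^*_T(X_{m-1})$ over $H^*_T$.

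Next I would analyze $\varphi_m$. Since $\varphi_m(x_j)=0$ for all $j<m$ and the restriction map $H^*_T(X_{m-1})\hookrightarrow \bigoplus_{j<m}H^*_T(x_j)$ is injective (because $X_{m-1}$ is equivariantly formal by Theorem \ref{eqforfiltration.thm}(c), hence satisfies Corollary 2.3), the class $\varphi_m|_{X_{m-1}}$ vanishes. Therefore $\varphi_m$ lies in the image of $H^*_{T,c}(C_m)$ inside $H^*_T(X)$. Recall that $H^*_{T,c}(C_m)\simeq H^*_T\cdot \mathcal{T}_m$ is a free rank-one $H^*_T$-module generated by the Thom class, whose image in $H^*_T(X)$ restricts to $\E_T(m,C_m)$ at $x_m$ and to $0$ at the other fixed points. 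Since $\varphi_m(x_m)$ is by hypothesis a nonzero scalar multiple of $\E_T(m,C_m)$, the class $\varphi_m$ is a nonzero scalar multiple of this image of $\mathcal{T}_m$, and therefore freely generates the image of $H^*_{T,c}(C_m)$ in $H^*_T(X)$.

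Now I would prove generation: given $\mu\in H^*_T(X)$, apply induction to $\mu|_{X_{m-1}}$ to obtain coefficients $a_1,\ldots,a_{m-1}\in H^*_T$ with $\mu|_{X_{m-1}}=\sum_{i<m}a_i\varphi_i|_{X_{m-1}}$. Then $\mu-\sum_{i<m}a_i\varphi_i$ restricts to zero on $X_{m-1}$, so it lies in the image of $H^*_{T,c}(C_m)$, which is generated freely by $\varphi_m$; write it as $a_m\varphi_m$. For freeness, suppose $\sum_i a_i\varphi_i=0$ with not all $a_i$ zero, and let $j$ be the smallest index with $a_j\neq 0$. Restricting to $x_j$ kills every term with $i>j$ (since $\varphi_i(x_j)=0$ for $i>j$) and every term with $i<j$ (since $a_i=0$), leaving $a_j\varphi_j(x_j)=0$. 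But $\varphi_j(x_j)$ is a nonzero scalar multiple of $\E_T(j,C_j)$, which is a nonzero product of characters in the integral domain $H^*_T$, forcing $a_j=0$, a contradiction.

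The only real subtlety I expect is verifying that $\varphi_m$ lifts uniquely to a generator of the image of $H^*_{T,c}(C_m)$; this hinges on knowing that the restriction map for $X_{m-1}$ is injective on fixed points, which is where equivariant formality of each filtered piece (Theorem \ref{eqforfiltration.thm}(c)) is essential. Everything else is bookkeeping driven by the lower-triangular shape of the restriction matrix $(\varphi_i(x_j))_{i,j}$ with nonzero diagonal entries $\E_T(j,C_j)$, which is the standard mechanism behind such basis theorems (compare \cite{hhh:c}, Proposition 4.1).
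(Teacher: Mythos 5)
Your proof is correct and follows essentially the same route as the paper: linear independence via the lower-triangular restriction matrix with nonzero Euler-class diagonal entries, and generation by induction on the filtration length using the equivariant short exact sequences and the injectivity of restriction to fixed points. You have in fact written out in full the generation step that the paper dismisses as a "routine exercise" resolved by the commutative diagram of Section 6.
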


\begin{proof}
Since $X$ is equivariantly formal, we know that $H^*_T(X)$ injects into $H^*_T(X^T)$
and is a free $H^*_T$-module of rank $m=|X^T|$. First, we show that the $\varphi_i$'s 
are linearly independent. 
Arguing by contradiction, suppose there is a
linear combination 
$$\sum_{i=0}^m f_i\varphi_i=0,$$
with $f_i \in H^*_T$, not all of them zero.
Let $k$ be the minimum of the set $\{i\,|\, f_i\neq 0\}$. Then 
we have
$$f_k\varphi_k + f_{k+1}\varphi_{k+1}+\ldots f_m\varphi_m=0$$
where $f_k\neq 0$.
Let us restrict this linear combination to $x_k$. Then
$$f_k\varphi_k(x_k) + f_{k+1}\varphi_{k+1}(x_k)+\ldots f_m\varphi_m(x_k)=0.$$
But $\varphi_{\ell}(x_k)=0$ for all $\ell>k$. Thus we obtain
$$f_k\varphi(x_k)=0.$$ 
However, $\varphi(x_k)$ is a non-zero multiple of the Euler class $\E_T(x_k,C_k)$ and, as such,
it is non-zero. We conclude that $f_k$ must be zero. This is a contradiction.

To conclude the proof, we need to show that the $\varphi_i$'s generate $H^*_T(X)$
as a module. 
But this is a routine exercise, using induction on the length of the filtration of $X$
(the base case being trivial). The commutative
diagram of page \pageref{lift.diagram} then disposes of the inductive step. 
\end{proof}

As for the existence of classes satisfying Lemma 6.2, 
we will show that they can always be constructed on GKM-varieties. 
First, we need two technical lemmas.

\begin{lem}
Let $X$ be a normal projective $T$-variety 
with finitely many fixed points. 
Choose a generic one-parameter subgroup 
and write $X$ as
$X=C\sqcup Y$, 
where $$C=\{z\in X \, | \, \lim_{t\to 0}tz=x\}$$ 
is the stable cell of $x\in X^T$, 
and 
$Y$ is closed and $T$-stable.
Then any closed irreducible $T$-stable curve that passes through $x$ is contained in
the Zariski closure of $C$.
\end{lem}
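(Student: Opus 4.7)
The plan is to analyze the orbit structure on $E$ using the generic one-parameter subgroup $\lambda$, and then use the irreducibility of $E$ together with the fact that $Y$ is closed to eliminate the undesired case. Throughout, genericity of $\lambda$ gives $X^{\C^*}=X^T$, so $E^{\C^*}=E^T$ and the limits $\lim_{t\to 0}\lambda(t)\cdot z$, which exist for every $z\in E$ by projectivity, always lie in $E^T$.

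First, I would observe that $E^T$ is finite (as it is contained in $X^T$) and nonempty (it contains $x$), and that $T$ cannot act trivially on $E$, since otherwise $E\subset X^T$ would contradict the finiteness of $X^T$. Hence $T$ acts through a one-dimensional quotient torus on $E$, and a general point $e\in E$ lies in an open $T$-orbit of dimension one. Let $x_\infty:=\lim_{t\to 0}\lambda(t)\cdot e\in E^T\subset X^T$.

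Next I would split into two cases. If $x_\infty=x$, then by definition of $C$ the generic point $e$ lies in $C$, so the dense subset $E\setminus E^T$ of $E$ is contained in $C$; irreducibility of $E$ then gives $E=\overline{E\setminus E^T}\subset\overline C$, which is the desired conclusion. Suppose instead $x_\infty\neq x$, so that $x_\infty$ is some other fixed point of $X$. Then $e$ belongs to the $BB$-cell $C_{x_\infty}$ of $x_\infty$; since the stable cells for distinct fixed points are pairwise disjoint, $C_{x_\infty}\subset X\setminus C=Y$. Thus a dense open subset of $E$ lies in the closed $T$-stable set $Y$, forcing $E\subset Y$. But $x\in E\cap C$ and $C\cap Y=\emptyset$, a contradiction. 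Hence only the first case occurs, and $E\subset\overline C$.

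The main subtle point, which I would want to check carefully, is the claim that every cell other than $C$ lies in $Y$; this is immediate from the set-theoretic decomposition $X=C\sqcup Y$ together with the fact that distinct $BB$-cells are disjoint, but it is the hinge of the contradiction. The rest is a direct application of the projective limit property and the genericity of $\lambda$.
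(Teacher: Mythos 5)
Your proof is correct and follows essentially the same route as the paper's: both identify the curve as the closure of a one-dimensional orbit, observe that if the generic point's limit were a fixed point other than $x$ then the whole curve would lie in the closed $T$-stable set $Y$ (contradicting $x\in\ell$), and conclude via $T$-stability of $C$ and taking closures. The only difference is cosmetic (your two-case split versus the paper's direct contradiction).
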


\begin{proof}
Let $\ell$ be a closed irreducible $T$-stable curve passing through $x$. 
Recall that $\ell$  
is the closure of a one-dimensional orbit $Tz$. 
Moreover, $\ell=\overline{Tz}$
has two fixed points, namely, $x$ and a fixed point $y_{i(\ell)}$ contained {\em necessarily} in $Y$. 
We claim that $z \in C$. 
For otherwise, $\displaystyle \lim_{t\to 0}tz=y_{i(\ell)}$, 
which implies that $z$ belongs to the stable subvariety of $y_{i(\ell)}$.
Since $Y$ is $T$-invariant and closed, then $\ell=\overline{T z}\subset Y$. 
That is, $x\in \partial{\ell}$ would belong to $Y$, which is absurd. 
Thus $z\in C$.

The fact that $C$ is also $T$-stable gives the inclusion $Tz\subset C$.
We conclude that $\ell=\overline{Tz}\subset \overline{C}$.     
\end{proof}

\begin{lem}
Let $X$ be a normal projective variety on which a torus acts with a finite number of fixed points
and one-dimensional orbits. Suppose $X$ is equivariantly formal and  
there is a generic one-parameter subgroup 
such that $X$ can be written as a disjoint union
$X=C\sqcup Y$, 
where $$C=\{z\in X \, | \, \lim_{t\to 0}tz=x\}$$ 
is a rational cell with unique attractive fixed point $x\in X^T$, 
and 
$Y$ is closed and $T$-stable.
Then the
cohomology class 
$\tau\in \oplus_{w\in X^T} H^*_T(w)$, 
defined by
$$\tau(x)=\E_T(x,C) {\rm \; \; and \; \;} \tau(y)=0 {\rm \; for \; all \;} y\in Y^T,$$
belongs to the image of $H^*_T(X)$ in $H^*_T(X^T)$.
\end{lem}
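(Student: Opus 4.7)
The plan is to exhibit $\tau$ as a piecewise polynomial function in the sense of Theorem \ref{gkm.thm} and then invoke that theorem. Since $X$ is equivariantly formal and $T$-skeletal, it is a GKM-variety, so the image of $H^*_T(X)$ in $H^*_T(X^T)$ is precisely the subalgebra $PP_T^*(X)$. Thus it suffices to verify, for every closed irreducible one-dimensional $T$-orbit closure $\mathcal{C}_{p,q} \subset X$ with distinct fixed points $p,q$ and associated character $\chi_{p,q}$, the congruence
\[
\tau(p) \equiv \tau(q) \pmod{\chi_{p,q}}.
\]

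I would split the verification into cases according to where the endpoints lie. If both $p$ and $q$ belong to $Y^T$, then $\tau(p)=\tau(q)=0$ and there is nothing to check. The only nontrivial case is a curve $\ell=\mathcal{C}_{p,q}$ with $p=x$ and $q\in Y^T$ (the case $p,q\in C$ does not arise, since $x$ is the unique fixed point of $C$). So the heart of the proof is: for every invariant curve $\ell$ through $x$ meeting $Y^T$ at a point $y_\ell$, the character $\chi_\ell$ divides $\E_T(x,C)=\tau(x)$.

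The key input here is Lemma 6.3 combined with Corollary \ref{characters.cell.cor}. Lemma 6.3 guarantees that every closed irreducible $T$-stable curve through $x$ in $X$ lies in $\overline{C}$; in particular, such curves are in bijection with the closures of the closed irreducible $T$-stable curves in the rational cell $C$. Since $X$ has only finitely many one-dimensional orbits, the same is true for $C$, and Corollary \ref{characters.cell.cor} (via Theorem \ref{cell.curves.thm}) then tells us that $C$ contains exactly $n=\dim C$ such curves $\mathcal{C}_1,\ldots,\mathcal{C}_n$, with characters $\chi_1,\ldots,\chi_n$, and that
\[
\E_T(x,C)=c\cdot \chi_1\cdots\chi_n
\]
for some positive rational $c$. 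Any invariant curve $\ell\subset X$ through $x$ is of the form $\overline{\mathcal{C}_i}$ for some $i$, and its character is $\chi_i$; hence $\chi_i \mid \E_T(x,C)$, giving $\tau(x)\equiv 0 = \tau(y_\ell)\pmod{\chi_i}$.

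This exhausts every invariant curve in $X$, so $\tau$ lies in $PP_T^*(X)$ and consequently in the image of $H^*_T(X)\hookrightarrow H^*_T(X^T)$. I do not expect a serious obstacle; the only subtlety is the bookkeeping identifying the characters appearing in the factorization of $\E_T(x,C)$ with the characters of invariant curves of $X$ through $x$, which is precisely what Lemma 6.3 and Corollary \ref{characters.cell.cor} were set up to handle.
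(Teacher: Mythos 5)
Your proposal is correct and follows essentially the same route as the paper: reduce to verifying the GKM congruences for $\tau$, observe that only curves joining $x$ to $Y^T$ matter, and then combine Lemma 6.3 with Theorem \ref{cell.curves.thm} and Corollary \ref{characters.cell.cor} to see that each such curve's character divides $\E_T(x,C)$. The bookkeeping point you flag (identifying invariant curves of $X$ through $x$ with those of $C$) is handled in the paper by the observation that $\ell_i\setminus\{y_i\}\subset C$, exactly as you anticipate.
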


\begin{proof}
The hypotheses imply that $X$ is a GKM-variety. As a result, 
the equivariant cohomology of $X$ can be described 
by the GKM-relations
of Theorem \ref{gkm.thm}. So, 
to prove the lemma, it is enough to verify that $\tau$
satisfies such relations.

Because $\tau$ restricts to zero at every fixed point except $x$, we need only show that
$$\tau(x)=\tau(x)-\tau(y)=\E_T(x,C)$$ is divisible by $\chi_{i}$ whenever the fixed points $x\in C$ and $y_i\in Y^T$
are joined by a $T$-curve $\ell_i$ in $X$, and $T$ acts on $\ell_i$ through $\chi_i$. Let $p$ be the total
number of $\ell_i$'s.

By Lemma 6.3, the curve $\ell_i$ is contained in the Zariski closure $\overline{C}$
of $C$. In fact, $\ell_i\setminus \{y_i\}\subset C$. 
Also, it follows from Theorem \ref{cell.curves.thm} that $p=dim(C)$.
Thus,  
using Corollary \ref{characters.cell.cor}, we conclude that $\E_T(x,C)$ is a non-zero multiple of the $\chi_i$'s. 
In short, $\tau$ belongs to $H^*_T(X)$.
\end{proof}

\medskip

It is noticeable that, in the previous lemmas, no assumption on the irreducibility of $X$ has been made. 
Surely we allow for some flexibility in this matter, since the various filtered pieces $X_i$
of a $\Q$-filtrable space $X$ need not be irreducible.

\medskip

\begin{thm}
Let $X$ be a $\Q$-filtrable GKM-variety.
Then cohomology generators $\{\varphi_i\}$ of $H^*_T(X)$ with the properties described
in Lemma 6.2 exist.  
\end{thm}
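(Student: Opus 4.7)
The plan is to construct each $\varphi_i$ by working inside the $i$-th filtered piece $X_i$ first and then lifting to $X$. For each $i \in \{1,\ldots,m\}$, write the decomposition $X_i = C_i \sqcup X_{i-1}$, in which $X_{i-1}$ is closed and $T$-stable in $X_i$, and $C_i$ is the rational cell with unique attractive fixed point $x_i$. By Theorem \ref{eqforfiltration.thm}(c)--(d), each filtered piece $X_i$ is itself equivariantly formal and, since $X$ is $T$-skeletal, also $T$-skeletal (as $X_i$ is closed and $T$-invariant), hence a GKM-variety. This puts us exactly in the situation of Lemma 6.4 applied to $X_i$ with $C = C_i$, $Y = X_{i-1}$, and distinguished fixed point $x_i$. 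Lemma 6.4 therefore produces a class $\tau_i \in H^*_T(X_i)$ with
$$\tau_i(x_i) = \E_T(x_i, C_i), \qquad \tau_i(x_j) = 0 \ \text{for all } j < i.$$

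Next I would lift $\tau_i$ step by step from $H^*_T(X_i)$ all the way up to $H^*_T(X_m) = H^*_T(X)$. The equivariant short exact sequences displayed just before Lemma 6.1,
$$0 \longrightarrow H^*_{T,c}(C_{k+1}) \longrightarrow H^*_T(X_{k+1}) \longrightarrow H^*_T(X_k) \longrightarrow 0,$$
are valid for every $k$ by the vanishing of odd cohomology (Theorem \ref{eqforfiltration.thm}(c)) and show in particular that each restriction map $H^*_T(X_{k+1}) \to H^*_T(X_k)$ is surjective. Starting from $\tau_i \in H^*_T(X_i)$ and successively choosing preimages under these surjections for $k = i, i+1, \ldots, m-1$, I obtain a class $\varphi_i \in H^*_T(X)$ whose restriction to $X_i$ coincides with $\tau_i$. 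Since restriction to $X_i$ commutes with restriction to each fixed point $x_j \in X_i$, the defining properties survive the lifting procedure: $\varphi_i(x_j) = 0$ for $j < i$, and $\varphi_i(x_i) = \E_T(x_i, C_i)$. The values $\varphi_i(x_j)$ for $j > i$ are arbitrary, which is harmless.

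Finally, the classes $\varphi_1, \ldots, \varphi_m$ so constructed satisfy exactly the hypotheses of Lemma 6.2, and so they generate $H^*_T(X)$ freely as a module over $H^*_T(pt)$. The main subtlety, and the step I expect to require most care, is the first one: making sure that Lemma 6.4 truly applies to every filtered piece $X_i$, not only to $X$ itself. This hinges on verifying that each $X_i$ inherits the GKM hypotheses (finite fixed-point set, finitely many one-dimensional orbits, equivariant formality) from $X$, which is where Theorem \ref{eqforfiltration.thm}(d) is essential. Once this is in place, the lifting through the surjective restriction maps is purely formal, and the free generation follows by directly invoking Lemma 6.2.
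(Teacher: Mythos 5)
Your proposal is correct and is essentially the paper's own argument: both hinge on applying Lemma 6.4 to the decomposition $X_i = C_i \sqcup X_{i-1}$ to produce the class vanishing at $x_j$ for $j<i$ and equal to $\E_T(x_i,C_i)$ at $x_i$, and on lifting through the surjections $H^*_T(X_{k+1}) \to H^*_T(X_k)$ furnished by the equivariant short exact sequences. The paper merely packages this as an induction on the length of the filtration (building the new top class at each stage and lifting the previously constructed ones one step), whereas you construct each class on $X_i$ directly and lift it all the way up; the mathematical content is identical.
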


\begin{proof}
We proceed by induction on $m$, the length of the filtration of $X$. 
If $m=1$, then $X=\{x_1\}$ and the statement is clear, since we can just choose $\varphi_1=1$.
Assuming we have proved the statement for varieties with a filtration of length $m$, let us prove the
case when the length is $m+1$. First, notice that 
$X_{m+1}=X$ and, 
by the inductive hypothesis, 
there are classes $\varphi_1,\ldots,\varphi_m\in H^*_T(X_m)$
which satisfy the desired properties in $H^*_T(X_{m})$. 
Using the commutative diagram of page \pageref{lift.diagram},
we can lift them to classes $\tilde{\varphi_1},\ldots,\tilde{\varphi_m}$ 
which still satisfy the required conditions, though this time they lie in $H^*_T(X_{m+1})=H^*_T(X)$.
In consequence, we just need to construct a class $\varphi_{m+1}\in H^*_T(X)$ with
the sought-after qualities. 
So set $\varphi_{m+1}(x_{m+1})=\E_T(x_{m+1},C_{m+1})$ and $\varphi_{m+1}(x_j)=0$
for all $j\leq m$. 
By Lemma 6.4, this class surely belongs to $H^*_T(X)$.
Thus the result also holds for varieties with a filtration of length $m+1$. 
This provides the inductive step in view of Lemma 6.2. 
\end{proof}


\begin{dfn}
Let $X$ be a $\Q$-filtrable $T$-variety.
%
Fix an ordering of the fixed points, say
$x_1<x_2< \ldots <x_m$.
Given $\mu \in H^*_T(X)$, we define its {\bf local index at $x_i$}, denoted $I_i(\mu)$, 
by the following formula:
$$I_i(\mu)=\int_{X_i}p_i^*(\mu),$$
where $p_i:X_i\to X$ denotes the inclusion of the $i$-th filtered piece into $X$.
It follows from the definition that assigning local indices yields $H^*_T$-linear morphisms
$$I_i:H^*_T(X)\to H^*_T(pt).$$
\end{dfn}

Using the localization formula (Proposition 5.4), one can easily prove the following

\begin{lem}
The local index of $\mu$ at $x_i$ satisfies
$$I_i(\mu)=\sum_{j\leq i}\frac{\mu(x_j)}{\E_T(x_j,X_i)},$$
where $\mu(x_j)$ denotes the restriction of $\mu$ to $x_j$. \hfill $\square$ 
\end{lem}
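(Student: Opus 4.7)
The plan is to reduce the statement to a single application of the Atiyah--Bott localization formula (Proposition 5.4), applied not to $X$ itself but to the $i$-th filtered piece $X_i$.

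First I would verify that the hypotheses of the localization formula apply to $X_i$. By Theorem \ref{eqforfiltration.thm}(d), each $X_i$ is itself a GKM variety; in particular, it is a (closed, hence projective) normal $T$-subvariety of $X$ on which $T$ acts with only finitely many fixed points. A straightforward inductive inspection of the filtration $\emptyset = X_0 \subset X_1 \subset \cdots \subset X_m = X$ shows that
\[
X_i^T = \{x_1, x_2, \ldots, x_i\},
\]
since each stratum $C_j = X_j \setminus X_{j-1}$ is a rational cell whose unique $T$-fixed point is the attractive fixed point $x_j$. Thus Proposition 5.4 applies to $X_i$ and any class in $H^*_T(X_i)$.

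Next I would unwind the definition. Given $\mu \in H^*_T(X)$, the local index is
\[
I_i(\mu) = \int_{X_i} p_i^*(\mu),
\]
with $p_i : X_i \hookrightarrow X$ the inclusion. Applying the Atiyah--Bott localization formula on $X_i$ to the class $p_i^*(\mu)$ gives
\[
I_i(\mu) \;=\; \int_{X_i} p_i^*(\mu) \;=\; \sum_{x \in X_i^T} \frac{\bigl(p_i^*(\mu)\bigr)\big|_x}{\E_T(x, X_i)}.
\]
Finally, for each $x_j \in X_i^T$, the restriction $(p_i^*\mu)|_{x_j}$ is just $\mu(x_j)$, since restriction to a fixed point factors through any $T$-stable closed subvariety containing it. Combined with the identification $X_i^T = \{x_1,\ldots,x_i\}$, this yields the desired formula
\[
I_i(\mu) \;=\; \sum_{j \leq i} \frac{\mu(x_j)}{\E_T(x_j, X_i)}.
\]

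The only subtle point, and what I would single out as the main (though minor) obstacle, is verifying that Proposition 5.4 is legitimately available on each $X_i$: this requires that $X_i$ be an honest complex projective $T$-variety with only finitely many fixed points, and that the equivariant Euler classes $\E_T(x_j, X_i)$ appearing in the denominators are nonzero. The first two conditions follow from Theorem \ref{eqforfiltration.thm}(d); the nonvanishing of $\E_T(x_j, X_i)$ follows because $x_j$ is rationally smooth in $X_i$ (it lies in the rational cell $C_j \subset X_i$), so by the discussion preceding Example 5.2, $\E_T(x_j, X_i)$ is a nonzero product of linear polynomials.
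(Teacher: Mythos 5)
Your proof is correct and is exactly the paper's: the paper offers no written argument beyond the remark that the lemma follows from the localization formula (Proposition 5.4), applied to $X_i$ and $p_i^*(\mu)$ with $X_i^T=\{x_1,\dots,x_i\}$, which is precisely what you do. One caveat on your final paragraph: for $j<i$ the point $x_j$ need \emph{not} be rationally smooth in $X_i$ (the cell $C_j$ is open in $X_j$, not in $X_i$, and $\Q$-filtrable varieties such as singular Schubert varieties show this failure is real), but this does not affect the proof, since Proposition 5.4 is really a statement about the inverses $1/\E_T(x_j,X_i)\in\mathcal{Q}_T$, which Arabia's Definition 5.1 provides for arbitrary isolated fixed points.
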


\begin{cor}
Let $x_i\in X^T$, be a fixed point.
Suppose that $\mu\in H^*_T(X)$ is a cohomology class that satisfies
$\mu(x_j)=0$ for all $j<i$. Then $$\mu(x_i)=I_i(\mu)\E_T(x_i,X_i).$$ \hfill $\square$  
\end{cor}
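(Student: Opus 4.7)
The plan is to derive the corollary as a direct specialization of Lemma 6.8. Concretely, I would start from the formula
\[
I_i(\mu)=\sum_{j\leq i}\frac{\mu(x_j)}{\E_T(x_j,X_i)},
\]
and use the hypothesis $\mu(x_j)=0$ for every $j<i$ to collapse the sum to its single surviving term, giving
\[
I_i(\mu)=\frac{\mu(x_i)}{\E_T(x_i,X_i)}.
\]
Rearranging yields the claimed identity $\mu(x_i)=I_i(\mu)\,\E_T(x_i,X_i)$, provided I can divide by $\E_T(x_i,X_i)$ and interpret the expression correctly.

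To justify that rearrangement, I would observe that the cell $C_i=X_i\setminus X_{i-1}$ is an open $T$-stable neighborhood of $x_i$ in $X_i$ (since $X_{i-1}$ is closed in $X_i$), and that $x_i$ is an attractive fixed point of the rational cell $(C_i,x_i)$. By the local character of the equivariant Euler class (the discussion in Section 5 replacing $X$ by a conical neighborhood), we have $\E_T(x_i,X_i)=\E_T(x_i,C_i)$. This is a nonzero product of characters by Corollary \ref{characters.cell.cor}, so the division is legitimate in the fraction field $\mathcal{Q}_T$, and both sides of the final identity live in $H^*_T(pt)$.

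The main obstacle, such as it is, is the bookkeeping about which fixed points appear in the Atiyah--Bott sum for $I_i(\mu)$. One has to remember that $p_i^*(\mu)$ restricts at each $x_j\in X_i^T$ (for $j\leq i$) to $\mu(x_j)$, so that the localization formula applied on $X_i$ produces exactly the sum in Lemma 6.8. Once this identification is in place, the proof is essentially a one-line computation, which is why the statement is packaged as a corollary. No deeper input beyond Lemma 6.8 and the identification $\E_T(x_i,X_i)=\E_T(x_i,C_i)$ is needed.
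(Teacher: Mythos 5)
Your proposal is correct and is exactly the argument the paper intends: the corollary is stated with no written proof precisely because it follows by substituting the vanishing hypothesis into the local index formula of the preceding lemma and clearing the denominator. Your additional care in identifying $\E_T(x_i,X_i)$ with $\E_T(x_i,C_i)$ and noting it is a nonzero product of characters is a reasonable (and harmless) elaboration of the same one-line computation.
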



Our most important result in this Section is the
following generalization of the work of 
Guillemin and Kogan (\cite{gk:morse}, Theorems 1.1 and 1.6)
to $\Q$-filtrable GKM-varieties.

\begin{thm}\label{eulergenerators.thm}
Let $X$ be a $\Q$-filtrable GKM-variety.
Let $x_1<x_2<\ldots<x_m$ be the order relation on $X^T$ 
compatible with the filtration of $X$.
Then, for each $i=1,\ldots,m$, 
there exists a unique class $\theta_i\in H^*_T(X)$ with the following properties:
\begin{enumerate}[(i)]
\item $I_i(\theta_i)=1$,

\item $I_j(\theta_i)=0$ for all $j\neq i$,

\item the restriction of $\theta_i$ to $x_j \in X^T$ is zero for all $j<i$, and

\item $\theta_i(x_i)=\E_T(i,C_i)$.
\end{enumerate}
Moreover, the $\theta_i$'s generate $H^*_T(X)$ freely as a module over $H^*_T(pt)$. 
\end{thm}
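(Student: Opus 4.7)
The plan is to construct the $\theta_i$'s by modifying the generators $\{\varphi_i\}$ produced in Theorem 6.5, which already satisfy conditions (iii) and (iv). The first thing I would verify is that (iii) and (iv) together automatically imply (i). Since the rational cell $C_i$ is an open $T$-invariant conical neighborhood of $x_i$ inside $X_i$, the local nature of the equivariant Euler class (Section 5) gives $\E_T(x_i, X_i) = \E_T(x_i, C_i)$, and therefore by Corollary 6.8
\[
I_i(\varphi_i) \;=\; \frac{\varphi_i(x_i)}{\E_T(x_i, X_i)} \;=\; \frac{\E_T(i, C_i)}{\E_T(x_i, C_i)} \;=\; 1.
\]
Property (iii) together with Lemma 6.7 also gives $I_j(\theta_i) = 0$ for every $j < i$, so the only genuinely new conditions to arrange are $I_j(\theta_i) = 0$ for $j > i$.

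The key observation is that the matrix $M_{jk} := I_j(\varphi_k)$ is lower triangular with $1$'s on the diagonal: for $k > j$ one has $\varphi_k(x_\ell) = 0$ for every $\ell \leq j$, so Lemma 6.7 forces $I_j(\varphi_k) = 0$; for $k = j$ the computation above gives $1$. Because $M$ is unipotent and lower triangular, for each fixed $i$ I can solve recursively (running $j = i+1, i+2, \ldots, m$) for unique coefficients $c_{i+1}, \ldots, c_m \in H^*_T$ such that the class
\[
\theta_i \;:=\; \varphi_i + \sum_{k>i} c_k \, \varphi_k
\]
satisfies $I_j(\theta_i) = 0$ for all $j > i$. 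Each $\varphi_k$ with $k > i$ vanishes at $x_1, \ldots, x_i$, so this modification preserves both (iii) and (iv), and therefore also (i).

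For uniqueness, if $\theta_i'$ is any other class satisfying (i)--(iv), then $\theta_i - \theta_i'$ vanishes at $x_\ell$ for all $\ell \leq i$. Expanding the difference in the basis $\{\varphi_k\}$ and evaluating successively at $x_1, \ldots, x_i$ kills the first $i$ coefficients, so the difference lies in the span of $\varphi_{i+1}, \ldots, \varphi_m$; applying the operators $I_j$ for $j = i+1, \ldots, m$ in order, and using once more the lower triangularity of $M$, forces the remaining coefficients to vanish. Free generation over $H^*_T$ is then immediate: the change-of-basis matrix from $\{\varphi_i\}$ to $\{\theta_i\}$ is upper triangular with $1$'s on the diagonal, hence invertible over $H^*_T$, so the $\theta_i$'s inherit free generation from Lemma 6.2. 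The step I expect to require the most care is the identification $\E_T(x_i, X_i) = \E_T(x_i, C_i)$, which drives the whole argument: it should follow from the openness of $C_i$ in $X_i$ and the conical model of a rational cell (Corollary 3.10), but since $X_i$ is only rationally smooth, not smooth, at $x_i$, it must be justified via the local definition of the equivariant Euler class rather than through any smoothness hypothesis.
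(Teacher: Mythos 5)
Your proposal is correct and follows essentially the same route as the paper: both start from the generators $\varphi_i$ of Theorem 6.5, use the vanishing $\varphi_k(x_\ell)=0$ for $\ell<k$ together with Lemma 6.7 and Corollary 6.8 to get the unitriangularity of $I_j(\varphi_k)$, and then perform the same recursive elimination in the indices $j>i$ (the paper phrases it as repeated subtraction of $I_{k_0}(\varphi_i)\varphi_{k_0}$, you phrase it as inverting a unipotent triangular matrix), with uniqueness and free generation obtained the same way. Your explicit identification $\E_T(x_i,X_i)=\E_T(x_i,C_i)$ via the openness of $C_i$ in $X_i$ and the local (excision/conical-neighborhood) definition of the Euler class is exactly the point the paper leaves implicit when combining Corollary 6.8 with property (iv).
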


\begin{proof}
By Theorem 6.5,
choose a set of free generators $\{\varphi_i\}$ which satisfy the properties
described in Lemma 6.2, 
together with the additional condition 
$\varphi_i(x_i)=\E_T(i,C_i)$.

\smallskip

Given $i$, 
notice that $I_j(\varphi_i)=0$, for all $j<i$, and $I_i(\varphi_i)=1$.
We will show that we can modify these $\varphi_i$'s accordingly
to obtain the generators $\theta_i$.
In fact, given $i\in \{1,\ldots,m\}$, 
the only obstruction to setting 
$\theta_i=\varphi_i$ is that
$I_j(\varphi_i)$ can be non-zero for some $j>i$.

\smallskip

Let $i\in \{1,\ldots,m\}$. 
If $I_j(\varphi_i)=0$ for all $j>i$, 
then let $\theta_i=\varphi_i$.
Otherwise, proceed as follows.
Let $k_0$ be the minimum of all $k>i$ such that $I_k(\varphi_i)\neq 0$.
Define $\Psi_i=\varphi_i-I_{k_0}(\varphi_i)\varphi_{k_0}$. 
Let us compute the local indices of $\Psi_i$.
Clearly, if $j<i$, we have $I_j(\Psi_i)=0$.
Also, if $j=i$, then $I_i(\Psi_i)=1$. 
It is worth noticing that $\Psi_i$ restricts to 0 at each $x_j$ with $j<i$.
Now if $j$ satisfies
$i<j\leq k_0$, then $I_j(\Psi_i)=0$. 
So, arguing by induction, 
we can provide a class $\widetilde{\Psi_i}$ such that $I_j(\widetilde{\Psi_i})=0$ for all $j\neq i$, 
and $I_i(\widetilde{\Psi_i})=1$.
Thus, set $\theta_i=\widetilde{\Psi_i}$.
Working on each $i$ at a time, we conclude that there exist classes $\theta_i$ satisfying
conditions (i)-(iv) of the Theorem.

\smallskip

Let us now prove uniqueness. Suppose there are classes $\{\theta_i\}$ and $\{\theta_i'\}$ satisfying 
all the properties of the theorem. Fix $i$ and let $\tau=\theta_i-\theta_i'$.
It is clear that $\tau$ is an element of $H^*_T(X)$ whose local index $I_j(\tau)$
is zero for all $j$. Suppose that $\tau$ is not zero. Then, since $H^*_T(X)$
injects into $H^*_T(X^T)$, there should be a $k$ such that $\tau(x_k)\neq 0$.
Take the minimum of all $k$'s for which $\tau(x_k)\neq 0$. Denote this minimum by $s$.
Then, by Corollary 6.8, one would have $\tau(x_s)=I_s(\tau)\E_T(x_s,X_s)=0$. 
This is absurd. Therefore $\tau=0$.
Since $i$ can be chosen arbitrarily, we conclude that $\theta_i=\theta_i'$ for all $i$.

\smallskip

Finally, notice that properties (iii) and (iv) together with Lemma 6.2 imply that
the $\theta_i$'s freely generate $H^*_T(X)$. We are done.
\end{proof}

\section{Rational Cells and Standard Group Embeddings}

Previously, we have developed the
theory of $\Q$-filtrable varieties. 
In this last section,
we provide the theory with a large class of examples, namely,
rationally smooth standard embeddings. 
We show that 
these varieties
admit $BB$-decompositions into rational cells (Theorem \ref{ratsmisqfiltrable.cor}).
Thus, they are $\Q$-filtrable and satisfy Theorems \ref{eqforfiltration.thm}
and \ref{eulergenerators.thm}.

\smallskip

First, let us set the stage. 
An affine algebraic monoid $M$ is called {\bf reductive} if it is irreducible, normal, and its
unit group is a reductive algebraic group. See \cite{re:lam} for many of the details.
A reductive monoid is called
{\bf semisimple} if it has a zero element, and its unit group 
has a one-dimensional center. 

Let $M$ be a reductive monoid with zero. 
Denote by $G$ its unit group and by $T$ a maximal torus of $G$.
Associated to $M$, there is a torus embedding $\overline{T}\subset M$
defined as follows $$\overline{T}=\{x \in M \;|\; xt=tx, \, {\rm \, for \, all \; } t\in T\}.$$ 
Certainly, $T\subseteq \overline{T}$.
Let $E(\overline{T})$ be the idempotent set of $\overline{T}$; that is, 
$$E(\overline{T})=\{e\in \overline{T}\,|\, e^2=e\}.$$
%



The {\bf Renner monoid}, $\mathcal{R}$, is defined to be $\mathcal{R}:=\overline{N_G(T)}/T$.
It is a finite monoid
whose group of units is $W$ (the Weyl group) 
and contains $E(\overline{T})$
as idempotent set. In fact, 
any $x\in \mathcal{R}$ can be written as $x=fu$, where $f\in E(\overline{T})$ and $u\in W$.
Recall that $W$ is generated by reflections $\{s_\alpha\}_{\alpha \in \Phi}$, where $\Phi$ is the set of roots
of $G$ with respect to $T$.

\smallskip

Denote by $\mathcal{R}_k$ the set of elements of rank $k$ in $\mathcal{R}$, 
that is, 
$$\mathcal{R}_k=\{x\in \mathcal{R} \, | \, \dim{Tx}=k \,\}.$$

\begin{dfn}\label{standard.dfn}
Let $M$ be a reductive monoid with unit group $G$ and zero element $0\in M$.
There exists a central one-parameter subgroup 
$\epsilon:\C^*\to G$ with image $Z$ contained in the center of $G$, 
that 
converges to $0$ (\cite{bri:mon}, Lemma 1.1.1). 
Then $\C^*$ acts attractively on $M$ via $\epsilon$,
and hence the quotient
\[
\mathbb{P}_\epsilon(M) = [M\setminus\{0\}]/{\C^*}
\] 
is a normal projective variety.
Notice also that $G\times G$ acts on $\mathbb{P}_\epsilon(M)$ via
\[
G\times G\times \mathbb{P}_\epsilon(M)\to \mathbb{P}_\epsilon(M),\;
(g,h,[x])\mapsto [gxh^{-1}].
\]
Furthermore, $\mathbb{P}_\epsilon(M)$ is a normal projective embedding 
of the reductive group $G/Z$.
In the sequel, $X=\P_\epsilon(M)$ will be called a {\bf Standard Group Embedding}.
\end{dfn}
 
For an up-to-date description of these and other embeddings, see \cite{ab:em}.

\begin{ex}
Let $G_0$ be a semisimple algebraic group over the complex numbers and let $\rho:G_0\to {\rm End}(V)$ be a representation of $G_0$. Define $Y_\rho$
to be the Zariski closure of $G=[\rho(G_0)]$ in $\mathbb{P}({\rm End}(V))$, the projective space associated with ${\rm End}(V)$.
Finally, let $X_\rho$ be the normalization of $Y_\rho$. 
By definition, $X_\rho$ is an standard group embedding of $G$. 
Notice that $M_\rho$, the Zariski closure of $\C^*\rho(G_0)$ in ${\rm End}(V)$, is a semisimple monoid whose group of units is $\C^*\rho(G_0)$.
Rationally smooth standard embeddings of the form $X_\rho$, with $\rho$ irreducible, 
have been classified combinatorially in \cite{re:ratsm}.
\end{ex}

We now come to the main result of this section. It states
that rationally smooth standard embeddings are equivariantly
formal for the induced $T\times T$-action.

\begin{thm}\label{ratsmisqfiltrable.cor}
Let $X=\mathbb{P}_\epsilon(M)$ be a standard group embedding. 
If $X$ is rationally smooth, then $X$ is $\Q$-filtrable.  
\end{thm}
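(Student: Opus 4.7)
The plan is to verify, one by one, the three defining clauses of Definition \ref{qfiltrable.def}: normality, finiteness of the fixed-point set for a suitable torus action, and existence of a generic one-parameter subgroup whose Bialynicki--Birula cells are rational cells. Normality comes for free from Definition \ref{standard.dfn}. To handle the second clause, I would take $T$ to be a maximal torus of $G$; then $T\times T$ acts on $X=\P_\epsilon(M)$ through the quotient killing the image of $\epsilon$. By the Renner--Putcha structure theory for reductive monoids, the $T\times T$-orbits on $M\setminus\{0\}$ are indexed by the nonzero elements of the Renner monoid $\mathcal{R}$, and passing to the quotient by $Z$ makes $X^{T\times T}$ a finite set, described combinatorially in terms of rank-one Renner data.

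Next I would choose a generic one-parameter subgroup $\lambda:\C^*\to T\times T$ with $X^\lambda = X^{T\times T}$ and apply Theorem 4.3 to obtain a filtrable BB-decomposition $X=\bigsqcup_r W_r$. The bulk of the proof is then to verify that each cell is a rational cell in the sense of Definition \ref{rational.cell}. Irreducibility of $W_r$ together with the fact that $x_r$ is its unique attractive fixed point follow from the standard BB formalism on a normal variety. For affineness, apply Sumihiro's theorem to extract a $T$-stable affine open neighborhood $U_r$ of $x_r$; by Proposition \ref{tang.cell}, $U_r$ embeds $T$-equivariantly as a closed subvariety of the Zariski tangent space $T_{x_r}X$. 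The cell $W_r$ is then identified with the $\lambda$-attracting part of $U_r$, cut out inside $U_r$ by the coordinates carrying non-positive $\lambda$-weight, hence closed in $U_r$ and therefore affine.

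The main obstacle is transferring rational smoothness from $X$ to each individual cell $W_r$ at $x_r$. Since $X$ is assumed rationally smooth at $x_r$, the affine slice $U_r$ is as well. Using the decomposition $U_r\simeq W_r\times W_r^-$ of $U_r$ into $\lambda$-attracting and $\lambda$-repelling slices --- which is visible from the weight-space decomposition of $T_{x_r}X$ into which $U_r$ closedly embeds --- a K\"unneth-type argument for local cohomology factors $H^*(U_r,U_r\setminus\{x_r\})$ as a tensor product of the local cohomologies of the two slices. The hypothesis that the total local cohomology is one-dimensional in top degree forces each factor to satisfy the rational-smoothness criterion at $x_r$; in particular $W_r$ is rationally smooth at its attractive fixed point, and Lemma 3.3 then upgrades this to rational smoothness everywhere on $W_r$. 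This is the step where I would lean most heavily on the local analysis of rationally smooth fixed points in \cite{bri:rat}.

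With all three clauses of Definition \ref{qfiltrable.def} established, $X$ is $\Q$-filtrable, and the vanishing of odd-degree cohomology is then immediate from Theorem \ref{eqforfiltration.thm}(c).
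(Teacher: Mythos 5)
Your overall plan is right, and you have correctly located the crux: transferring rational smoothness from $X$ to each individual BB-cell. But the mechanism you propose for that step does not work. You invoke a decomposition $U_r\simeq W_r\times W_r^-$ of a $T$-stable affine neighborhood into $\lambda$-attracting and $\lambda$-repelling slices, ``visible from the weight-space decomposition of $T_{x_r}X$ into which $U_r$ closedly embeds.'' That local product structure is exactly the content of Bialynicki-Birula's theorem in the \emph{smooth} case, where $U_r$ can be taken to be (\'etale-locally) the tangent space itself. For a singular $X$, the closed embedding $U_r\hookrightarrow T_{x_r}X$ gives you $W_r\cap U_r = U_r\cap (T_{x_r}X)^+$ as a closed subvariety (so affineness of the cell is fine), but it gives no product decomposition of $U_r$: the equations cutting out $U_r$ inside $T_{x_r}X$ need not split as a product of equations on the positive and negative weight spaces. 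Without that product, the K\"unneth factorization of $H^*(U_r,U_r\setminus\{x_r\})$ has nothing to apply to, and the whole transfer of rational smoothness collapses. This is not a repairable technicality; it is precisely the point where the singular case genuinely differs from the smooth one.

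The paper gets around this by using structure theory specific to reductive monoids rather than general local analysis. It takes Renner's explicit cell decomposition $X=\bigsqcup_{r\in\mathcal{R}_1}C_r$ and his product formula $C_r=U_1\times C_r^*\times U_2$ with $U_1,U_2$ affine spaces --- a theorem about group embeddings, not a general BB fact --- which reduces everything to the ``toric part'' $C_e^*=[f_eM(e)]/Z$ attached to a rank-one idempotent $e$. Rational smoothness of $C_e^*$ is then deduced from rational smoothness of $M$ via Lemma 7.4: the subvariety $f M$ is the fixed-point set of a subtorus of $T$ (Brion's Lemma 1.1.1 of \cite{bri:mon}), and fixed-point sets of subtori inherit rational smoothness by Theorem 1.1 of \cite{bri:rat}. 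If you want to salvage your approach, you would need to replace your K\"unneth step with an appeal to this monoid-theoretic input; some substitute for it is unavoidable.
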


\begin{proof}
Renner has shown that $X$ comes equipped with 
the following BB-decomposition:
$$X=\bigsqcup_{r\in \mathcal{R}_1}C_r,$$
where $\mathcal{R}_1=X^{T\times T}$. See 
Theorem 3.4 of \cite{re:hpoly}, and  
and Theorem 4.3 of \cite{re:hpolyirr} for more details. 
Our strategy is to show that
if $X$ is rationally smooth, then each cell $C_r$ is rationally smooth.

With this purpose in mind, 
we call the reader's attention to the fact that, 
in the terminology of \cite{re:hpoly}, 
$M$ is quasismooth (Definition 2.2 of \cite{re:hpoly}) if and only if
$M\setminus\{0\}$ is rationally smooth.
The equivalence between these two notions 
follows from Theorem 2.1 of \cite{re:hpoly} and Theorems
2.1, 2.3, 2.4 and 2.5 of \cite{re:ratsm}. 

Next, by Lemma 4.6 and Theorem 4.7 of \cite{re:hpoly}, each $C_r$ equals $$U_1\times C_r^* \times U_2,$$
where the $U_i$'s are affine spaces. 
Moreover, if we write $r\in \mathcal{R}_1$ as $r=ew$, with $e\in E_1(\overline{T})$ and $w\in W$, 
then $C_r^*=C_e^*w$. 
So it is enough to show that $C_e^*$ is rationally smooth, for $e\in E_1(\overline{T})$.

By Theorem 5.1 of \cite{re:hpoly}, it follows that, if $X=\mathbb{P}_\epsilon(M)$ is rationally smooth, then
$$C_e^*=[f_eM(e)]/Z,$$
for some unique $f_e \in E(\overline{T})$, where $M(e)=M_eZ$ and $M_e$ is rationally smooth (Theorem 2.5 of \cite{re:ratsm}).
Furthermore, the proof of Theorem 5.1 of \cite{re:hpoly} also 
implies that $[e]$ is the 
zero element of the rationally smooth, reductive, affine monoid $M(e)/Z$. 
Additionally,
$$C_e^*=\{x\in M(e)/Z\,|\, \lim_{s\to 0}sx=[e]\},$$ 
for some generic one-parameter subgroup. 
Using Lemma 7.4 below, one concludes that $C_e^*$ is rationally smooth. 

Finally, since $X$ is normal, projective and admits a $BB$-decomposition
into rational cells, we have compiled all the necessary data to conclude that 
$X$ is $\Q$-filtrable. 
\end{proof}

\begin{lem} 
Let $M$ be a reductive monoid with zero. Suppose 
that zero $0$ is a rationally smooth point of $M$. Let
$f\in E(M)$, be an idempotent of $M$. Then
$0\in fM$ is a rationally smooth point of the
closed subvariety $fM$.
\end{lem}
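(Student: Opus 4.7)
The plan is to exhibit $fM$ as the fixed-point locus of a suitable subtorus of $T\times T$ acting on $M$, and then invoke the general principle that fixed loci of subtori on rationally smooth varieties inherit rational smoothness.

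First I would verify that $fM$ is a closed, irreducible, $\C^*$-stable subvariety of $M$ containing $0$ as an attractive fixed point under the central one-parameter subgroup $\epsilon$. The morphism $\ell_f\colon M\to M$, $m\mapsto fm$, is idempotent (since $f^2=f$), so its image $fM=\{m\in M : fm=m\}$ is closed in $M$; irreducibility follows from that of $M$; and centrality of $\epsilon$ in $G$ yields $\epsilon(t)\cdot fm = f\cdot \epsilon(t)m\in fM$ with $\lim_{t\to 0}\epsilon(t)\cdot fm = 0$.

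Next I would reduce to the case $f\in E(\overline{T})$. Every idempotent in a reductive monoid is $G$-conjugate to one in $\overline{T}$ (see \cite{re:lam}), and left multiplication by a unit $g\in G$ gives a variety isomorphism $fM\to (gfg^{-1})M$ fixing $0$, which preserves rational smoothness at $0$. With $f\in E(\overline{T})$, I would fix a faithful representation $M\subset {\rm End}(V)$: the element $f$ acts on $V$ as a $T$-equivariant projection onto a sum of weight spaces $fV=\bigoplus_{\chi\in S_f}V_\chi$, giving the $T$-module splitting $V=fV\oplus(1-f)V$. I would then construct a subtorus $S\subset T\times T$, built from the common kernel of the weights in $S_f$, acting on $M$ on the left, so that $M^S=fM$; a block-matrix computation with respect to $V=fV\oplus(1-f)V$ verifies the identification.

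The conclusion would follow by combining Lemma 3.3 (which propagates rational smoothness of $M$ from the attractive point $0$ to all of $M$) with the general fact, due to Brion (see \cite{bri:rat}), that the fixed-point locus of a subtorus on a rationally smooth variety is rationally smooth at every fixed point. Hence $fM=M^S$ would be rationally smooth at $0$.

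The main obstacle I anticipate is the explicit construction of the subtorus $S$ realizing $fM$ as a fixed locus: while straightforward when the weights in $S_f$ span a sufficiently generic sublattice of $X^*(T)$, this may require a subtler argument in degenerate cases---for example, using diagonalizable subgroup schemes rather than tori, or making use of the full $T\times T$-action rather than acting only on one side. Once the identification is secured, the appeal to Brion's fixed-locus theorem finishes the proof.
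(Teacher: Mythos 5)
Your overall strategy is the right one and is the same as the paper's: exhibit $fM$ as the fixed-point locus of a subtorus acting on $M$ by left multiplication, and then invoke Brion's theorem (\cite{bri:rat}, Theorem 1.1) that such fixed loci inherit rational smoothness. However, the step you flag as the ``main obstacle'' is genuinely where your argument is incomplete, and the specific construction you propose does not work in general: the subtorus $S=\bigcap_{\chi\in S_f}\ker\chi$ acts trivially on $fV$ but may also fix weight spaces outside $fV$ whose weights happen to vanish on $S$, so $M^S$ can be strictly larger than $fM$. Passing to $T\times T$ or to diagonalizable subgroup schemes does not obviously repair this.

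The paper closes this gap with a cleaner device that you should note: by Lemma 1.1.1 of \cite{bri:mon}, there is a one-parameter subgroup $\lambda:\C^*\to T$ extending to a morphism $\C\to\overline{T}$ with $\lambda(0)=f$. For such a $\lambda$ the identification is a two-line limit computation rather than a weight analysis: if $\lambda(t)x=x$ for all $t$, then $x=\lim_{t\to 0}\lambda(t)x=fx\in fM$; conversely $\lambda(t)f=\lim_{s\to 0}\lambda(ts)=f$, so every element of $fM$ is fixed. Hence $fM=M^{\lambda(\C^*)}$ exactly, with no genericity hypothesis on the weights, and Brion's theorem applies at $0$. (With this formulation the auxiliary reductions in your write-up --- conjugating $f$ into $E(\overline{T})$, choosing a faithful representation, and the block-matrix computation --- become unnecessary, although the conjugation step is harmless and implicitly needed to place $f$ in $\overline{T}$.)
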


\begin{proof}
By Lemma 1.1.1 of \cite{bri:mon}, one can find
a one-parameter subgroup $\lambda:\C^*\to T$, with image $S$,
such that $\lambda(0)=f$. Notice that
$$fM=\{x\in M\,|\,\lambda(t)x=x \, , \forall \, t\in \C^*\}.$$
That is, $fM$ is the fixed point set of the subtorus $S$ of $T$.
Thus, by Theorem 1.1 of \cite{bri:rat}, one concludes that $0$
is also a rationally smooth point of $fM$.
\end{proof}

We conclude this section providing a partial converse to Theorem \ref{ratsmisqfiltrable.cor}.

\begin{thm}
Let $X=\P_\epsilon(M)$ be a standard embedding. 
Suppose that $X$ contains a unique closed $G\times G$-orbit.
If $X$ is $\Q$-filtrable, then 
$X$ is rationally smooth. 
\end{thm}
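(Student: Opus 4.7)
The strategy is to combine $\Q$-filtrability with the rigidity imposed by having a unique closed $G\times G$-orbit, in order to exhibit a single open rational-cell neighborhood on which $X$ is rationally smooth, located at a point of the closed orbit, and then propagate rational smoothness by $G\times G$-equivariance.

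First, I would reduce the problem to checking rational smoothness at a single point of the unique closed orbit $Y\subset X$. The non-rationally-smooth locus $X_{\mathrm{bad}}\subset X$ is closed (rational smoothness is an open condition) and $G\times G$-invariant. Since $Y$ is the unique closed $G\times G$-orbit, every non-empty closed $G\times G$-stable subvariety of $X$ contains $Y$. Hence it suffices to exhibit one point $p\in Y$ at which $X$ is rationally smooth: then $X_{\mathrm{bad}}\cap Y=\emptyset$ forces $X_{\mathrm{bad}}=\emptyset$.

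Next, I would use $\Q$-filtrability to find such a $p$ together with an open rational-cell neighborhood. Let $\lambda:\C^*\to T\times T$ be the generic one-parameter subgroup furnished by Definition \ref{qfiltrable.def}, and let $p\in X^{T\times T}=\mathcal{R}_1$ be the unique fixed point attracting the generic point of $X$, i.e.\ the element with $\lim_{t\to 0}\lambda(t)\cdot x=p$ for $x$ in a dense open subset of $X$. Then the BB-cell $C_p$ has dimension $\dim X$, is open in $X$, and, by $\Q$-filtrability, is a rational cell. Since rational smoothness is a local property and $C_p$ is open in $X$, every point of $C_p$, and in particular $p$ itself, is a rationally smooth point of $X$.

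The final step is to verify that $p\in Y$. For $x$ in the open $G\times G$-orbit $G/Z\subset X$ and $\lambda=(\lambda_1,\lambda_2)$ generic, the curve $\lambda_1(t)\,x\,\lambda_2(t)^{-1}\in M\setminus\{0\}$ has a well-defined leading term as $t\to 0$. By the standard theory of torus limits in reductive monoids, this leading term lies, for generic choices, in the $G\times G$-orbit of a minimal non-zero idempotent $e\in E_1(\overline{T})$. Passing to the quotient $X=(M\setminus\{0\})/\C^*$ places the limit $p$ in the $G\times G$-orbit of $[e]$, which is a closed orbit; by uniqueness of the closed orbit this orbit is exactly $Y$, so $p\in Y$.

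The main obstacle is this last step: controlling the $\lambda$-limit of a generic element of the open orbit and identifying it with a point of $Y$. Without the unique closed orbit hypothesis, the asymptotic analysis could place $p$ in a non-closed $G\times G$-boundary orbit of $X$, and then the propagation argument of the first step would fail---rational smoothness at $p$ would extend only to the $G\times G$-orbit of $p$, not to all of $X$. Uniqueness of $Y$ is precisely what forces the open BB-cell's attractor into $Y$, ensuring that the $\Q$-filtrable cell structure governs the rational smoothness of the entire embedding.
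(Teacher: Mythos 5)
Your proof is correct, but it takes a genuinely different route from the paper's. The paper first notes that the unique-closed-orbit hypothesis forces $W\times W$ to act transitively on $\mathcal{R}_1=X^{T\times T}$ (citing Renner's book), so that once the unique dense cell $C_\sigma$ --- a rational cell by hypothesis --- certifies rational smoothness at its attractive fixed point $\sigma$, every fixed point $r=w\cdot\sigma\cdot v$ is a rationally smooth point by homeomorphism invariance; it then invokes Lemma 7.6 (rational smoothness at all fixed points propagates to all of $X$ along the BB-flow) to conclude. You instead propagate from a single point: the non-rationally-smooth locus is closed and $G\times G$-stable, hence contains the unique closed orbit $Y$ whenever it is non-empty, so exhibiting one rationally smooth point on $Y$ suffices; this neatly bypasses both the $W\times W$-transitivity statement and Lemma 7.6. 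The one place your argument is under-justified is the claim that the attractor $p$ of the open dense cell lies in $Y$: the ``leading term'' analysis of $\lambda_1(t)\,x\,\lambda_2(t)^{-1}$ is only a heuristic for a general reductive monoid. The clean justification is the fact, already used in the paper's proof of Theorem 7.3, that $X^{T\times T}=\mathcal{R}_1$ consists precisely of the rank-one elements of the Renner monoid, and the $G\times G$-orbit of any rank-one element is closed in $X$ (its closure in $M$ adds only $0$, which is removed before taking the quotient); hence every $T\times T$-fixed point of $X$ --- in particular $p$ --- lies in a closed orbit, which by uniqueness is $Y$. With that substitution your argument is complete, and is arguably more economical than the paper's.
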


\begin{proof}
Since
$X$ contains a unique closed $G\times G$-orbit,  
it follows from \cite{re:lam}, Chapter 7, that
$W\times W$ acts transitively on $\mathcal{R}_1$, the set of 
representatives of the $T\times T$-fixed points of $X$.
Because $X$ is irreducible, there
exists a unique cell, say $C_\sigma$, with $\sigma \in \mathcal{R}_1$, 
such that $X=\overline{C_{\sigma}}$. 
By assumption,
$C_{\sigma}$ is rationally smooth at $\sigma$, and so, $X$ is rationally smooth at $\sigma$.
We claim that $X$ is rationally smooth at every $r\in \mathcal{R}_1=X^{T\times T}$. 
Indeed, by the previous remarks, $r=w\cdot \sigma \cdot v$, for
some $(w,v)\in W\times W$ and rational smoothness is a local property 
invariant under homeomorphisms. Now Lemma 7.6 below concludes the proof.
\end{proof}

\begin{lem}
Let $X$ be a projective $T$-variety with a finite number of fixed points $x_1,\ldots, x_m$. 
Then $X$ is rationally smooth at every $x\in X$ if and only if $X$ is rationally smooth at every fixed point $x_i$.
\end{lem}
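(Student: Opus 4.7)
The forward implication is immediate from the definition. For the converse, the plan is to propagate rational smoothness from each fixed point outward along a generic one-parameter subgroup, exploiting two features of the definition: rational smoothness is an \emph{open} condition (in the complex topology) and it is \emph{invariant under homeomorphisms}. The first is built into the definition itself, since the neighborhood $U$ appearing in the rational smoothness condition at $x$ witnesses rational smoothness at every $y\in U$. The second holds because the statement $H^{m}(X,X-\{y\})$ depends only on the local topology around $y$, so any self-homeomorphism of $X$ preserves the set of rationally smooth points.

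With this in hand, let $U\subset X$ denote the locus of rationally smooth points, which is therefore open and $T$-invariant. Assume that each $x_i$ lies in $U$. Choose a generic one-parameter subgroup $\lambda:\C^*\to T$ so that $X^{\lambda}=X^T=\{x_1,\dots,x_m\}$. For any $x\in X$, projectivity of $X$ guarantees that $\lim_{t\to 0}\lambda(t)\cdot x$ exists in $X$ and is fixed by $\lambda$; hence this limit equals some $x_i$.

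Because $x_i\in U$ and $U$ is open, $\lambda(t)\cdot x\in U$ for all $t\in\C^*$ sufficiently close to $0$. Fixing such a $t_0$, the translation map $\lambda(t_0):X\to X$ is a homeomorphism and $\lambda(t_0)\cdot x\in U$, so $x\in U$ as well. Since $x$ was arbitrary, $U=X$, and $X$ is rationally smooth everywhere.

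The only step that requires any care is the openness and homeomorphism-invariance of the rationally smooth locus; both follow directly from the definition recalled in Section~3, so the argument should go through without further technical obstacle.
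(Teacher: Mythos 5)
Your proof is correct and follows essentially the same route as the paper's: flow an arbitrary point to a fixed point via a generic one-parameter subgroup, use openness of the rationally smooth locus near that fixed point, and then use $T$-invariance (homeomorphism invariance) of that locus to pull the conclusion back to the original point. The paper phrases the "sufficiently close to $0$" step via an explicit sequence $s_n\cdot x\to x_k$, but the content is identical.
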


\begin{proof}
One direction is clear. For the converse, 
pick a generic one-parameter subgroup $\lambda:\C^*\to T$
such that $X^T=X^{\C^*}$. 
Let $x\in X$. 
Then, there exists $x_k\in X^T$ such that $x_k=\displaystyle \lim_{t\to 0}tx$ (BB-decomposition).
Moreover, since $X$ is rationally smooth at $x_k$, there exists a
neighborhood $V_k$ of $x_k$ with the property that $X$ is rationally smooth at every $y\in V_k$.
By construction, there exists $s\in \C^*$ satisfying $sx\in V_k$. 
To see this, simply notice that we can find a sequence
$\{s_n\}\subset \C^*$ for which $s_n\cdot x$ converges to $x_k$, i.e. there is $N$ such that 
$s_n\cdot x$ belongs to $V_k$, for all $n\geq N$. Now setting $s=s_N$ yields $s\cdot x\in V_k$. 
In other words, $sx$ is a rationally smooth point of $X$. 
But the set of rationally smooth points is $T$-invariant.
Hence, $x$ is a rationally smooth point of $X$. 
Inasmuch as the point $x$ was chosen arbitrarily, the argument is complete.
\end{proof}

In the author's thesis it was shown 
that all standard embeddings are $T\times T$-skeletal. 
Consequently, rationally smooth standard embeddings 
are also GKM-varieties.
In a forthcoming paper (\cite{go:standard})
we find explicitly all the GKM-data 
(i.e. fixed points, invariant curves and associated characters) 
of any rationally smooth standard embedding $\P_\epsilon(M)$,  
and describe $H^*_{T\times T}(\P_\epsilon(M))$ as a complete combinatorial invariant of $M$.  
The results will appear elsewhere.


\begin{thebibliography}{E-G-99}


\bibitem[AB]{ab:em} Alexeev, V.; Brion, M. {\em Stable reductive varieties II}. Projective case, Adv. Math. 184 (2004), 380-408. 




\bibitem[Ar]{ar:eu} Arabia, A. {\em Classes d'Euler \'equivariantes et points rationnellement lisses}. Annales de l'institut Fourier, tome 48, No. 3 (1998), p. 861-912.


\bibitem[AS]{as:index2} Atiyah, M.F.; Segal, G. B. {\em The index of elliptic operators: II}. The Annals of Mathematics, 2nd Ser., Vol. 87, No. 3. May, 1968. pp. 531-545.




\bibitem[At]{a:comp} Atiyah, M. F. {\em Elliptic Operators and Compact Groups}. Lecture Notes in Mathematics, 401. Springer-Verlag. 1974.


\bibitem[BB1]{bb:torus}Bialynicki-Birula, A. {\em Some theorems on actions of algebraic groups.} The Annals of Mathematics, 2nd Ser., Vol 98, No. 3, Nov. 1973, pp. 480-497. 

\bibitem[BB2]{bb:decomp}Bialynicki-Birula, A. {\em Some properties of the decompositions of algebraic varieties determined by actions of a torus.} Bull. Acad. Polon. Sci. Sér. Sci. Math. Astronom. Phys. 24 (1976), no. 9, 667–674. 



\bibitem[BDP]{bif:reg} Bifet, E; De Concini, C.; Procesi,C. {\em Cohomology of Regular Embeddings}. Advances in Mathematics, 82, pp. 1-34 (1990).




\bibitem[Bo]{bo:sem} Borel, A. {\em Seminar on transformation groups}. Annals of Math Studies, No. 46, Princeton University Press, Princeton, N.J. 1960.






\bibitem[Br1]{bri:eqchow} Brion, M. {\em Equivariant Chow groups for torus actions}. Transformation groups, vol. 2, No. 3, 1997, pp. 225-267.


\bibitem[Br2]{bri:ech} Brion, M.       {\em Equivariant cohomology and equivariant intersection theory}. Notes by Alvaro Rittatore. 
NATO Adv. Sci. Inst. Ser. C Math. Phys. Sci., 514,  Representation theories and algebraic geometry (Montreal, PQ, 1997),  1--37, Kluwer Acad. Publ., Dordrecht, 1998. 

\bibitem[Br3]{bri:bru} Brion, M. {\em The behaviour at infinity of the Bruhat decomposition}. Comment. Math. Helv. 73, 1998, pp. 137-174.


\bibitem[Br4]{bri:rat} Brion, M. {\em Rational smoothness and fixed points of torus actions}. Transformation Groups, Vol. 4, No. 2-3, 1999, pp. 127-156.


\bibitem[Br5]{bri:eu} Brion, M. {\em Poincar\'e duality and equivariant cohomology}. Michigan Math. J. 48, 2000, pp. 77-92.

\bibitem[Br6]{bri:mon} Brion, M. {\em Local structure of algebraic monoids}. Mosc. Math. J. 8 (2008), no. 4, 647–666, 846.


\bibitem[BV]{bv:kth} Brion, M., Vergne, M. {\em An equivariant Riemann-Roch theorem for complete, simplicial toric varieties}. J. Reine Angew. Math. 482 (1997), 67–92.


\bibitem[C]{c:schu} Carrell, J. {\em The Bruhat graph of a Coxeter group, a conjecture of Deodhar, and rational smoothness of Schubert varieties}. Algebraic groups and their generalizations: classical methods (University Park, PA, 1991), 53–61,
Proc. Sympos. Pure Math., 56, Part 1, Amer. Math. Soc., Providence, RI, 1994. 




\bibitem[CS]{cs:schur} T. Chang, T. Skjelbred, {\em The topological Shur lemma and
related results}, Annals of Mathematics, 2nd Ser., Vol. 100, No. 2, (Sept., 1974), pp. 307-321.


\bibitem[D]{da:tor} Danilov, V. {\em The geometry of toric varieties}. Uspekhi Mat. Nauk 33 (1978), no. 2(200), 85–134, 247. 









\bibitem[G-1]{go:th} Gonzales, R. {\em GKM theory of rationally smooth group embeddings}. PhD. thesis, The University of Western Ontario, 2011. 

\bibitem[G-2]{go:standard} Gonzales, R. {\em GKM theory of rationally smooth standard embeddings}. Preprint.

\bibitem[GK]{gk:morse} Guillemin, V., Kogan, M. {\em Morse theory on Hamiltonian $G$-spaces and equivariant $K$-theory}. J. Differential Geometry, 66, 2004, pp. 345-375.

\bibitem[GKM]{gkm:eqc} Goresky, M., Kottwitz, R., MacPherson, R. {\em Equivariant cohomology, Koszul duality, and the localization theorem.} Invent. math. 131, 25-83 (1998)




\bibitem[GZ]{gz:ske} Guillemin, V., Zara, C. {\em $1$-skeleta, Betti numbers, and equivariant cohomology}. Duke Math. J. 107 (2001), 283-349.




\bibitem[Har]{har:ag} Hartshorne, R. {\em Algebraic Geometry}, GTM, 52, Springer-Verlag, 1978.

\bibitem[HHH]{hhh:c} Harada, M.; Henriques, A.; Holm, T. S. {\em Computation of generalized equivariant cohomologies of 
                                                                     Kac-Moody flag varieties}. Adv. Math. 197 (2005), no. 1, 198-221.



\bibitem[Hs]{hs:ctg} Hsiang, Wu Yi. {\em Cohomology Theory of Topological Transformation Groups}. Ergebnisse der Mathematik und ihrer Grenzgebiete, Band 85. Springer-Verlag, New York-Heidelberg, 1975.

%
%










\bibitem[M]{mc:hm} McCrory, C. {\em A Characterization of Homology Manifolds}. J. London Math. Soc. 16 (1977), 146-159.






\bibitem[PS]{ps:mixed}Peters, C. and Steenbrink, J. {\em Mixed Hodge Structures}. Springer Verlag, 2008.





\bibitem[Q]{qui1:spec} D. Quillen. {\em The Spectrum of an Equivariant Cohomology Ring: I, II}. The Annals of Mathematics, 2nd. Ser., Vol. 94, No. 3. Nov., 1971, 
pp. 549-572, pp.573-602.




\bibitem[R1]{re:hpoly} Renner, L. {\em The $H$-polynomial of a semisimple monoid}. J. Alg. 319 (2008), 360-376.


\bibitem[R2]{re:ratsm} Renner, L. {\em Rationally smooth algebraic monoids}. Semigroup Forum 78 (2009), 384-395.


\bibitem[R3]{re:hpolyirr} Renner, L. {\em The $H$-polynomial of an Irreducible Representation}. Journal of Algebra 332 (2011) 159–186.

\bibitem[R4]{re:lam} Renner, L. {\em Linear Algebraic Monoids}. Encyclopedia
of Mathematical Sciences, vol. 134. Invariant Theory and Algebraic Transformation Groups, V. Springer-Verlag, Berlin, 2005.







\bibitem[Seg]{se:equiv} Segal, G. {\em Equivariant K-theory}. Publ. math. l'I.H.E.S., tome 34 (1968), p. 129-151.




\bibitem[Sha]{sha:alg} Shafarevich, I. {\em Basic Algebraic Geometry 1}. 2nd rev. and expanded ed. Springer-Verlag, 1994. 



\bibitem[Su]{su:eq} Sumihiro, H. {\em Equivariant completion}, J. Math. Kyoto University 14 (1974), 1-28.






\bibitem[U]{uma:kth} Uma, V. {\em Equivariant $K$-theory of compactifications of algebraic groups.} Transform. Groups 12 (2007), No. 2, 371-406. 

\bibitem[VV]{vv:hkth} Vezzosi, G., Vistoli, A. {\em Higher algebraic $K$-theory for actions of diagonalizable algebraic groups.}  Invent. Math. 153 (2003), No. 1, 1-44. 

\bibitem[W]{w:formal} Weber, A. {\em Formality of equivariant intersection cohomology of algebraic varieties.} Proc. Amer. Math. Soc. 131 (2003), no. 9, 2633-2638.
\end{thebibliography}
\end{document}